\theoremstyle{plain}
\newtheorem{thm}{Theorem}
\newenvironment{mainthm}[1]
  {\innermainthm}
  {\endinnermainthm}
\newtheorem{lemma}[thm]{Lemma}
\newtheorem{claim}[thm]{Claim}
\newtheorem{prop}[thm]{Proposition} 
\newtheorem{asmp}{Assumption}
\theoremstyle{definition}
\theoremstyle{remark}
\newtheorem{rem}[thm]{Remark}
\numberwithin{equation}{section}
\numberwithin{thm}{section}
\newcommand{\s}{\mathbf s}
\newcommand{\tbf}{\mathbf t}
\newcommand{\ubf}{\mathbf u}
\newcommand{\F}{\mathbb F}
\newcommand{\R}{\mathbb R}
\newcommand{\Gw}{G_{\mathrm{walk}}}
\newcommand{\G}{\mathsf G}
\newcommand{\Hsf}{\mathsf H}
\DeclareMathOperator{\Cay}{Cay}
\title{Hypergraph expanders of all uniformities from Cayley graphs}
\author{David Conlon\thanks{Department of Mathematics, California Institute of Technology, Pasadena, CA 91125. Email: {\tt dconlon@caltech.edu}. Research supported by a Royal Society University Research Fellowship and ERC Starting Grant 676632.} \and Jonathan Tidor\thanks{Department of Mathematics, Massachusetts Institute of Technology, Cambridge, MA 02139. Email: {\tt jtidor@mit.edu}. Researched supported by an MIT Presidential Fellowship.} \and Yufei Zhao\thanks{Department of Mathematics, Massachusetts Institute of Technology, Cambridge, MA 02139. Email: {\tt yufeiz@mit.edu}. Research supported by NSF Awards DMS-1764176 and DMS-1362326 and the MIT Solomon Buchsbaum Fund.}}
\date{}
\begin{document}

\maketitle

\begin{abstract}
Hypergraph expanders are hypergraphs with surprising, non-intuitive expansion properties. In a recent paper, the first author gave a simple construction, which can be randomized, of $3$-uniform hypergraph expanders with polylogarithmic degree. We generalize this construction, giving a simple construction of $r$-uniform hypergraph expanders for all $r \geq 3$. 
\end{abstract}

\section{Introduction}
\label{sec:intro}

An expander is a sparse graph with the property that every small vertex subset expands, that is, is adjacent to many vertices outside of the set. The study of expanders has occupied a central place in both mathematics and computer science for the last forty years, finding numerous applications across both areas. We refer the reader to the detailed surveys~\cite{HLW06} and \cite{L12} for further information.

That expanders exist was first shown by Pinsker~\cite{P73}, who observed that random regular graphs are almost surely expanders. Since his result, many different methods have been developed for constructing expanders, some completely explicit and some retaining elements of randomness: using representation theory and Kazhdan's property (T), as done by Margulis~\cite{M73} when he found the first explicit construction; by taking finite quotients of an infinite tree, a procedure used by Margulis~\cite{M88} and by Lubotzky, Phillips and Sarnak~\cite{LPS88} to produce Ramanujan graphs, graphs with optimal spectral properties; the zig-zag product approach of Reingold, Vadhan and Wigderson~\cite{RVW02}; through random lifts~\cite{BL06}, an idea which has recently led to constructions of bipartite Ramanujan graphs~\cite{MSS15} of all degrees; and by taking random Cayley graphs~\cite{AR94, BGGT15}. Since the last of these will be important to us in what follows, let us say a little more.

The basic result on the expansion of random Cayley graphs is due to Alon and Roichman~\cite{AR94}. Their result is best explained in terms of spectral properties. Writing $\mathbf A_G$ for the adjacency matrix of a graph $G$, we let $\lambda(G)$ be the maximum absolute value of a non-trivial eigenvalue of $\mathbf A_G$. We then say that a $d$-regular graph $G$ is an \emph{$\epsilon$-expander} if $\lambda(G)\leq (1-\epsilon)d$. By Cheeger's inequality, a standard result in the area, the spectral condition implies that every vertex subset $U$ of $G$ with $|U| \leq |V(G)|/2$ has at least $\frac{\epsilon}{2} |U|$ neighbours outside of $U$, so $G$ conforms with our intuitive idea of what an expander should be. The Alon--Roichman theorem now says that for any $0 < \epsilon < 1$ there exists $C$ such that if a finite group has $n$ elements then the Cayley graph generated by $C \log n$ random elements of this group is almost surely an $\epsilon$-expander. This theorem is easily seen to be best possible when the group is $\F_2^t$, the case which will be of most relevance to us here.

In recent years, a theory of hypergraph, or high-dimensional, expanders has begun to emerge. The literature in this area is already quite large, but a good place to start exploring might be with the recent ICM survey by Lubotzky~\cite{L18}. Many different definitions have been proposed for hypergraph expansion, each with their own strengths and weaknesses. Intriguingly, one of the few points of consensus among these definitions is that random hypergraphs are not good models for hypergraph expansion.

What then is a good model? The prototype for all subsequent constructions (and, in effect, the foundation on which the entire area is built) are the Ramanujan complexes of Lubotzky, Samuels and Vishne~\cite{LSV05}, Li~\cite{Li04} and Sarveniazi~\cite{S07}. In the same way that Ramanujan graphs are built as finite quotients of infinite trees, inheriting many properties of these trees, Ramanujan complexes are finite quotients of Bruhat--Tits buildings and again inherit properties from these buildings.

To say more about their properties, let us fix some terminology. An \emph{$r$-uniform hypergraph} is a pair $H=(V,E)$ where $E\subseteq\binom{V}{r}$. The elements of $V$ are called vertices and the elements of $E$ are called $r$-edges. A $k$-edge of $H$ is a $k$-element subset of an $r$-edge. A hypergraph is \emph{$D$-regular} if every $(r-1)$-edge is contained in exactly $D$ of the $r$-edges. Then one property of Ramanujan complexes is that they are regular.

This is already a non-trivial property to obtain, especially if the number of edges in the hypergraph is linear in the number of vertices, as it is for Ramanujan complexes. But one might hope for more. The property that has drawn the most attention in the literature is Gromov's notion of topological expansion~\cite{G10} and its weaker relative, geometric expansion. We say that an $r$-uniform hypergraph $H$, seen as a simplicial complex, is an \emph{$\epsilon$-topological expander} if for any continuous map $\varphi$ from the complex into $\R^{r-1}$ there exists a point $p$ such that $\varphi^{-1}(p)$ intersects an $\epsilon$-fraction of the edges of $H$. Geometric expansion is defined similarly but only needs to hold for affine maps $\phi$ defined by first mapping the vertices of $H$ and then extending the map to the convex hull using linearity. That Ramanujan complexes are geometric expanders was shown in~\cite{FGLNP12}, while topological expansion (of a hypergraph derived from Ramanujan complexes) was shown in~\cite{EK17, KKL16}.

The property we will be particularly concerned with here is very different, but also very natural, being a generalization of the notion of graph expansion defined earlier. Given an $r$-uniform hypergraph $H$, define its \emph{walk graph} $\Gw(H)$ to be the graph whose vertices are the $(r-1)$-edges of $H$ with an edge between two $(r-1)$-edges if they are contained in a common $r$-edge. We say that $H$ is an \emph{$\epsilon$-expander} if $\Gw(H)$ is an $\epsilon$-expander. In particular, this implies that the random walk defined by starting at any $(r-1)$-edge and then repeatedly moving to an adjacent $(r-1)$-edge chosen uniformly at random converges rapidly to the uniform distribution. For Ramanujan complexes, this latter property was verified by Kaufman and Mass~\cite{KM16}.

The main result of this paper is a simple construction of such expanders for all uniformities $r \geq 3$, building significantly on earlier work of the first author~\cite{C17} which applied in the $3$-uniform case. The construction in~\cite{C17} is surprisingly simple: given an expanding Cayley graph over $\F_2^t$ with generating set $S$, define $\Hsf$ to be the hypergraph with vertex set $\F_2^t$ and 3-edges $(x+s_1,x+s_2,x+s_3)$ for $x\in\F_2^t$ and $s_1,s_2,s_3\in S$ distinct. Then $\Hsf$ is a hypergraph expander. For higher uniformities, the construction is not quite so simple, though it also not much more complicated.

\subsection{The construction}
\label{ssec:const}

We work with the following parameters: $r\geq 3$, the uniformity of our hypergraph; $t$, the dimension of our base vector space; $S\subset\F_2^t$, the generating set of our underlying Cayley graph; and $\epsilon>0$, the degree of expansion of this Cayley graph. We will typically make the following three assumptions.

\begin{asmp}
\label{asmp1}
For $r\geq 3$, $t$ and $S\subset\F_2^t$, the set $S$ satisfies $|S|\geq 2^{2r}$.
\end{asmp}

\begin{asmp}
\label{asmp2}
For $r\geq 3$, $t$ and $S\subset\F_2^t$, the sum $s_1+\cdots+s_l$ is different for every choice of $0\leq l\leq 2^r$ and every choice of distinct $\{s_1,\ldots,s_l\}\subset S$.
\end{asmp}

\begin{asmp}
\label{asmp3}
For $t$, $\epsilon>0$ and $S\subset\F_2^t$, the graph $\Cay(\F_2^t,S)$ is an $\epsilon$-expander.
\end{asmp}

In practice, for $r \geq 3$ and $\epsilon > 0$ fixed, we will let $C$ be a sufficiently large constant in terms of $r$ and $\epsilon$ and take $t$ sufficiently large in terms of $C$, $r$ and $\epsilon$. If we then define $S$ to be a uniform random set $S\subset\F_2^t$ of size $|S|=Ct$, the assumptions are all satisfied with high probability. Indeed, Assumption 1 is obvious, Assumption 2 follows from a first moment calculation and Assumption 3 follows from the Alon--Roichman theorem~\cite{AR94}.

Let $\mathcal P$ be a collection of subsets of $[r]$ defined as follows:
\begin{equation}
\label{eq:P}
\mathcal P=\begin{cases}
\{I\subset[r]:1\leq |I|<r/2\} &\text{if }r\text{ is odd},\\
\{I\subset[r]:1\leq|I|<r/2\text{ or }|I|=r/2\text{ and }1\in I\} \quad &\text{if }r\text{ is even}.
\end{cases}
\end{equation}
Observe that $\{\emptyset\}\cup\mathcal P$ is a downward-closed family of subsets of $[r]$ that contains exactly one of $I$ and $I^c$ for each $I \subseteq [r]$.

Let $\mathcal T\subset S^{\mathcal P}$ be the set of $|\mathcal P|$-tuples
\begin{equation}
\label{eq:T}
\mathcal T=\{(s_I)_{I\in\mathcal P}: s_I\in S\text{ distinct}\}.
\end{equation}
For $x\in\F_2^t$ and $\s\in\mathcal T$, define $e(x,\s)$ to be the ordered $r$-tuple \[e(x,\s)=\left(x+\sum_{I\in\mathcal P\atop{i\in I}}s_I\right)_{1\le i \le r}.\] 
In the above equation and throughout this paper we always use the convention that the bold-face letters $\s,\tbf,\ubf$ refer to elements of $\mathcal T$. The coordinates of these tuples are then named by subscripted normal-print letters, e.g., $\s=(s_I)_{I\in\mathcal P}$.

For $r\geq 3$, $t$ and $S\subset \F_2^t$ satisfying Assumptions \ref{asmp1}, \ref{asmp2}, define $\Hsf_{r,t,S}$ to be the $r$-uniform hypergraph whose vertex set is $\F_2^t$ and with $\{v_1,\ldots,v_r\}$ an $r$-edge if and only if there exists some $x\in\F_2^t$ and $\s\in\mathcal T$ such that $e(x,\s)=(v_1,\ldots,v_r)$. Note that Assumption \ref{asmp2} implies that every $r$-tuple $e(x,\s)$ for $\s\in\mathcal T$ has distinct coordinates.

The 3-uniform hypergraph $\Hsf_{3,t,S}$ has 3-edges $\{x+s_1,x+s_2,x+s_3\}$ for $x\in\F_2^t$ and $s_1,s_2,s_3\in S$ distinct. This is exactly the hypergraph considered in~\cite{C17}. The $r=4$ case will be spelled out explicitly for the sake of illustration in the next subsection.

Recall that for an $r$-uniform hypergraph $H$, we use $\Gw(H)$ to denote the graph whose vertices are the $(r-1)$-edges of $H$ where two $(r-1$)-edges are connected if and only if they are contained in a common $r$-edge of $H$. The main result of this paper is that if $\Cay(\F_2^t,S)$ is an ordinary expander graph then $\Hsf_{r,t,S}$ is a hypergraph expander in the sense that $\Gw(\Hsf_{r,t,S})$ is an expander graph.

\begin{thm} \label{thm:main}
For $r\geq 3$, $t$, $\epsilon>0$ and $S\subset\F_2^t$ satisfying Assumptions \ref{asmp1}, \ref{asmp2}, \ref{asmp3}, there exists a constant $c=c(r)>0$ such that the walk graph $\Gw(\Hsf_{r,t,S})$ is a $c\epsilon$-expander.
\end{thm}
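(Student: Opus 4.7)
The plan is to use the $\F_2^t$-translation symmetry of $\Hsf_{r,t,S}$ to Fourier-decompose the adjacency operator $A := \mathbf A_{\Gw(\Hsf_{r,t,S})}$, thereby reducing expansion of $\Gw$ to the Cayley expansion hypothesis on $\Cay(\F_2^t,S)$ together with a combinatorial spectral argument for the translation quotient. Since $\F_2^t$ acts on the $(r-1)$-edges (freely up to an $r$-dependent stabilizer that Assumption~\ref{asmp2} controls), we obtain $L^2(V(\Gw)) = \bigoplus_{\chi\in\widehat{\F_2^t}} V_\chi$, with each summand of dimension equal to the number of orbits. The operator $A$ preserves each $V_\chi$ and acts there as a matrix $M_\chi$ whose $(\omega,\omega')$-entry is the character sum $\sum_w \chi(w)$ taken over $w \in \F_2^t$ such that shifted representatives of $\omega$ and $\omega'$ lie in a common $r$-edge.

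For nontrivial $\chi$, I would control $\|M_\chi\|_{\mathrm{op}}$ using the Cayley expansion. From the definition of $e(x,\s)$ and Assumption~\ref{asmp2}, each $w$ contributing to a fixed entry of $M_\chi$ is of the form $w = x + \sum_{I \in D} s_I$ for some bounded family of subsets $D \subseteq \mathcal P$ (determined by which coordinates of the common $r$-edge lie in the two shifted representatives), with $x$ free and $\s$ parameterizing the connecting $r$-edge. Each entry of $M_\chi$ therefore factors through Fourier coefficients of the indicator $1_S$. Assumption~\ref{asmp3} gives $|\widehat{1_S}(\chi)| \le (1-\epsilon)|S|$ for every nontrivial $\chi$, and summing over the index set for $M_\chi$ yields $\|M_\chi\|_{\mathrm{op}} \le (1-c_1\epsilon)D$ for some $c_1 = c_1(r) > 0$, where $D$ is the regular degree of $\Gw$.

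For the trivial character, $M_1$ is (up to scaling) the adjacency matrix of the quotient graph $\Gw/\F_2^t$ on orbits of $(r-1)$-edges. Its Perron eigenvalue is $D$, and the remaining eigenvalues must be bounded by $(1-c_2)D$ for some $c_2 = c_2(r) > 0$. I would prove this by a direct combinatorial argument exploiting the symmetric structure of $\mathcal P$ and Assumption~\ref{asmp1} (which forces $|S| \ge 2^{2r}$, so that each orbit admits many $r$-edge extensions); the plan is to compare $\Gw/\F_2^t$ to a tractable highly-symmetric graph on $\mathcal T \times [r]$ whose spectral gap is readable from the combinatorics, via a path-counting or second-moment argument. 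Combining the two cases yields $\lambda(\Gw) \le (1-c\epsilon)D$ with $c=c(r) > 0$, as required.

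The main obstacle I expect is the trivial-character analysis: $M_1$ does not inherit any spectral gap from the Cayley graph and must be handled on purely combinatorial grounds. A close second is carefully enumerating the fiber structure of the map $(x,\s,k)\mapsto e(x,\s)\setminus\{v_k\}$ and the permutations of the $\{s_I\}$ that preserve the $r$-edge or $(r-1)$-edge they cut out, since these identifications govern both the dimensions of the $V_\chi$ and the count of $w$'s contributing to each entry of $M_\chi$. Assumptions~\ref{asmp1} and~\ref{asmp2} are precisely tuned to keep this bookkeeping bounded in terms of $r$ alone.
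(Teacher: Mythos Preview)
Your Fourier-decomposition plan is a genuinely different route from the paper's argument, but in its present form it has a real gap in the nontrivial-character step, and the trivial-character step you flag as ``the main obstacle'' turns out to be where essentially all of the work lives.

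\textbf{The nontrivial-$\chi$ step.} The assertion that ``each entry of $M_\chi$ factors through Fourier coefficients of $1_S$'' is not correct as written. Work instead with the auxiliary graph $\G_{r,t,S}$ on $\F_2^t\times\mathcal T$, where the Fourier picture is cleanest. By Lemma~\ref{thm:degree}, a neighbour $(y,\tbf)$ of $(x,\s)$ is obtained by choosing $k\in[r]$, deciding for each of the $2^{r-2}-1$ canonical pairs $\{I,J\}$ whether to swap $s_I\leftrightarrow s_J$, and selecting a fresh $s'_{\{k\}}\in S$. The shift $y-x$ is $0$ unless a ``type~2'' pair (with $I\sqcup J=[r]\setminus\{k\}$) is swapped, in which case it equals $s_I+s_J$ for coordinates of the \emph{old} $\s$. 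Thus the $(\s,\tbf)$-entry of $M_\chi$ is a single character value $\chi(y-x)\in\{\pm1\}$, not a sum over a free $s\in S$; the free coordinate $s'_{\{k\}}$ changes $\tbf$, not the weight. So no individual entry is small, and the row-sum of absolute values is exactly $D$. Any gain of $(1-c_1\epsilon)$ must come from cancellation across the whole matrix, and you have not explained where that cancellation comes from; it is certainly not a one-line consequence of $|\widehat{1_S}(\chi)|\le(1-\epsilon)|S|$.

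\textbf{The trivial-$\chi$ step.} The quotient $\Gw/\F_2^t$ (equivalently $\G_{r,t,S}/\F_2^t$) is a graph on $\mathcal T$, hence on $\approx|S|^{|\mathcal P|}$ vertices, not a fixed finite object. Establishing a spectral gap uniform in $|S|$ is precisely the content of the paper's ``Boolean bubbling'' (Lemmas~\ref{thm:single-bubble}--\ref{thm:full-bubble}): one constructs, for each $\s,\tbf\in\mathcal T$, on the order of $|S|^{M'-|\mathcal P|}$ walks of bounded length $M'$ between $(x,\s)$ and $(x,\tbf)$. Your ``path-counting or second-moment argument'' is exactly this lemma; the Fourier reduction has not bought you anything here.

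\textbf{What the paper does instead.} The paper performs the path-counting once, but in a form (Lemma~\ref{thm:Cay-walks}) that simultaneously handles both regimes: it shows that $\G_{r,t,S}^M$ contains, as a dense regular subgraph, a constant multiple of the tensor product of $\Cay(\F_2^t,(S+S)\setminus\{0\})$ with the complete graph on $\mathcal T$. That tensor product is trivially an $\epsilon$-expander, and a degree-comparison (Lemma~\ref{thm:expander}) transfers this to $\G_{r,t,S}$. The passage to $\Gw(\Hsf_{r,t,S})$ then goes via the $r!$-to-$1$ cover $\G_{r,t,S}\to\Gw'(\Hsf_{r,t,S})$ and the incidence-matrix identity $\mathbf B\mathbf B^T$ versus $\mathbf B^T\mathbf B$. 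So where your plan splits into two cases and leaves the harder one as a sketch, the paper's single combinatorial lemma handles both at once.
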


We also prove a generalization of this theorem to all lower-order random walks. For an $r$-uniform hypergraph $H$ and for $1\leq k\leq r-1$, define the $k$-th order walk graph $\Gw^{(k)}(H)$ to be the graph whose vertices are the $k$-edges of $H$ where two $k$-edges are connected if and only if they are contained in a common $(k+1)$-edge of $H$.

\begin{thm}\label{thm:main-lower}
For $r\geq 3$, $t$, $\epsilon>0$ and $S\subset\F_2^t$ satisfying Assumptions \ref{asmp1}, \ref{asmp2}, \ref{asmp3}, there exists a constant $c=c(r)>0$ such that for every $1\leq k\leq r-1$, the $k$-th order walk graph $\Gw^{(k)}(\Hsf_{r,t,S})$ is a $c\epsilon$-expander.
\end{thm}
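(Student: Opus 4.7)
My approach is to adapt the Fourier-analytic proof of Theorem~\ref{thm:main} so that it handles all $1 \le k \le r-1$ simultaneously. The key observation is that $\Hsf_{r,t,S}$ is translation-invariant under $\F_2^t$, so for each $k$ the adjacency matrix $A_k$ of $\Gw^{(k)}(\Hsf_{r,t,S})$ commutes with the translation action and therefore decomposes as a direct sum of blocks $M_\chi$ indexed by the characters $\chi$ of $\F_2^t$.

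Concretely, every $k$-edge arises as a translate of some set $\{\sum_{I \in \mathcal P,\, i \in I} s_I : i \in K\}$ for some $\s \in \mathcal T$ and some $K \subset [r]$ with $|K|=k$. Assumption~\ref{asmp2} forces each such representative to have trivial stabilizer in $\F_2^t$, so the $k$-edges partition into $\F_2^t$-orbits of size $2^t$, each one labelled by a ``type'' drawn from an index set of size $O_r(1)$. For each $\chi$, the block $M_\chi$ is then a matrix of size equal to the number of types, with entries recording weighted transitions between types twisted by $\chi$. The top eigenvalue of $M_1$ equals the common degree $D_k$ of $\Gw^{(k)}(\Hsf_{r,t,S})$ (with all-ones eigenvector), and to prove the theorem it suffices to show that every other eigenvalue of every block is at most $(1 - c\epsilon) D_k$.

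For each non-trivial $\chi$, the entries of $M_\chi$ are linear combinations of character sums $\sum_{s \in S} \chi(s)$, which by Assumption~\ref{asmp3} satisfy $|\sum_{s \in S}\chi(s)| \le (1-\epsilon)|S|$. A triangle-inequality argument, essentially mirroring the proof of Theorem~\ref{thm:main} but specialized to the walk on $k$-edges rather than $(r-1)$-edges, then yields $\|M_\chi\|_{op} \le (1-c\epsilon) D_k$.

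The hard step is bounding the non-top eigenvalues of the trivial-character block $M_1$, which is the adjacency matrix of a finite weighted ``type graph'' whose vertices are types of $k$-edges and whose edge weights count transitions via a common $(k+1)$-edge. To get a gap here I would combinatorially analyse how different $k$-subsets of $[r]$ sit inside $(k+1)$-subsets and exploit the symmetry encoded in the definition of $\mathcal P$ in~\eqref{eq:P}; Assumption~\ref{asmp1} feeds in to guarantee enough diversity of generators to make the type graph connected and bounded away from bipartite. An alternative route, which trades combinatorics for more abstract machinery, is to derive Theorem~\ref{thm:main-lower} from Theorem~\ref{thm:main} by comparing walks at different levels via a Kaufman--Mass style argument, but this would typically incur a worse dependence on $r$.
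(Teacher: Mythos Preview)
Your Fourier decomposition is valid in principle, but the claim that the translation orbits of $k$-edges form an index set of size $O_r(1)$ is false, and this breaks the approach as outlined. A $k$-edge is determined up to translation by the multiset of pairwise differences of its vertices, and those differences depend on the particular tuple $\s \in \mathcal T$ used to generate the edge, not merely on the subset $K \subset [r]$. For instance, when $k=r-1$ the translation orbits are essentially in bijection with $\mathcal T$ (up to the $r!$ reorderings handled by Lemma~\ref{thm:symmetric-edges}), so there are on the order of $|S|^{|\mathcal P|}$ types; for smaller $k$ one still gets a number of types growing polynomially in $|S|$. Consequently each character block $M_\chi$ has side length polynomial in $|S|$, and bounding its operator norm by a triangle inequality on entries loses a factor polynomial in $|S|$---far more than the factor $(1-c\epsilon)$ you can afford. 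The same issue afflicts your proposed analysis of the trivial-character block $M_1$: it is a genuinely large matrix, not a fixed ``type graph'' on $O_r(1)$ vertices, so connectedness alone gives no usable spectral gap.

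The paper does not use Fourier analysis for either Theorem~\ref{thm:main} or Theorem~\ref{thm:main-lower}. It works instead with the auxiliary graph $\G^{(k+1)}_{r,t,S}$ on ordered $(k+1)$-tuples and shows, by a combinatorial walk-counting argument (the ``Boolean bubbling'' of Section~\ref{ssec:bubble}), that some bounded power of this graph contains as a dense subgraph a tensor product of a Cayley graph with a complete graph on the full type set; it is precisely this step that absorbs the polynomially-many types. The new ingredient for general $k$ is a reduction identifying $\G^{(k+1)}_{r,t,S}$ with an induced subgraph of $\G_{k+1,t,2^{r-k-1}S'}$, so that the walk-counting from the top-order case can be rerun over the modified generating set $2^{r-k-1}S'$. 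The final passage from $\G^{(k+1)}$ to $\Gw^{(k)}$ is the same $\mathbf B\mathbf B^T$ versus $\mathbf B^T\mathbf B$ argument as in the proof of Theorem~\ref{thm:main}. Your Kaufman--Mass alternative is closer in spirit to something that might work, but it would require establishing link expansion for $\Hsf_{r,t,S}$, which is neither proved nor obviously true here.
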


We also prove a discrepancy result about $\Hsf_{r,t,S}$ similar to the conclusion of a high-dimensional expander mixing lemma. For an $r$-uniform hypergraph $H=(V,E)$ and $V_1,\ldots, V_r\subseteq V$, define $e_H(V_1,\ldots,V_r)$ to be the number of $r$-tuples $(v_1,\ldots,v_r)\in V_1\times\cdots\times V_r$ such that $\{v_1,\ldots,v_r\}\in E$. 

\begin{thm}\label{thm:avg-discrepancy}
For $r\geq 3$, $t$, $\epsilon>0$ and $S\subset \F_2^t$ satisfying Assumptions \ref{asmp1}, \ref{asmp2}, \ref{asmp3} and $V_1,\ldots,V_r\subseteq V$,
\[\begin{split}\left|e_{\Hsf_{r,t,S}}(V_1,\ldots,V_r)\vphantom{\frac{|S|}{(2^t)^{r-1}}}-\right.
&\left.\frac{|S|(|S|-1)\cdots(|S|-(2^{r-1}-2))}{(2^t)^{r-1}}|V_1|\cdots|V_r|\right|\\
&\leq(1-\epsilon)2r|S|^{2^{r-1}-1}(|V_1|\cdots|V_r|)^{1/r}.
\end{split}\]
\end{thm}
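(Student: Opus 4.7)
The plan is to prove the bound via Fourier analysis on $\F_2^t$, mirroring the argument for the classical expander mixing lemma but adapted to the higher-order structure of $\Hsf_{r,t,S}$. First, I would replace $e_{\Hsf_{r,t,S}}(V_1,\ldots,V_r)$ by the closely related quantity
\[\Sigma:=\sum_{(x,\s)\in\F_2^t\times\mathcal T}\prod_{i=1}^r\mathbf 1_{V_i}\!\left(x+\sum_{I\in\mathcal P,\,i\in I}s_I\right),\]
which counts pairs $(x,\s)$ whose ordered tuple $e(x,\s)$ lies in $V_1\times\cdots\times V_r$. The relationship between $\Sigma$ and $e_{\Hsf_{r,t,S}}$ is a bounded combinatorial multiplicity (at most $r!$) to be absorbed into the final constant.

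Next, Fourier-expanding each $\mathbf 1_{V_i}$ via the characters $\chi_\alpha(v)=(-1)^{\alpha\cdot v}$ and interchanging summations, the sum over $x$ collapses to $2^t$ times the indicator $\mathbf 1[\alpha_1+\cdots+\alpha_r=0]$, while the sum over $\s\in\mathcal T$ factors, up to an error from the distinctness constraint that is smaller by a factor of $|S|$ (controlled by Assumption \ref{asmp1}), as $\prod_{I\in\mathcal P}\sigma(\beta_I)$, where $\sigma(\beta):=\sum_{s\in S}\chi_\beta(s)$ and $\beta_I:=\sum_{i\in I}\alpha_i$. Since every singleton $\{i\}$ lies in $\mathcal P$ when $r\ge 3$, we have $\beta_{\{i\}}=\alpha_i$, so the unique $\alpha$ making every $\beta_I$ vanish is $\alpha=0$, which contributes exactly the main term $|\mathcal T|\prod|V_i|/(2^t)^{r-1}$. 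For any nonzero $\alpha$ with $\sum_i\alpha_i=0$, some $\beta_{\{j\}}\ne 0$, so Assumption \ref{asmp3} gives $|\sigma(\beta_{\{j\}})|\le(1-\epsilon)|S|$, whence $|\prod_I\sigma(\beta_I)|\le(1-\epsilon)|S|^{|\mathcal P|}$.

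It remains to bound the leftover Fourier sum
\[2^t\!\!\sum_{\alpha:\,\sum\alpha_i=0}\!\!\prod_{i=1}^r|\widehat{\mathbf 1_{V_i}}(\alpha_i)|\ \le\ (|V_1|\cdots|V_r|)^{1/r}.\]
For $r=2$ this is immediate from Cauchy--Schwarz and Parseval; for general $r$, a direct single application of Cauchy--Schwarz only yields the weaker exponent $1/2$, so obtaining the geometric-mean exponent $1/r$ requires an iterated Cauchy--Schwarz (in the spirit of the Gowers ``box norm'' unfolding) that reduces the multilinear quantity to the diagonal case $V_1=\cdots=V_r$, which is then estimated directly by Parseval. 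This multilinear Fourier inequality is the main technical obstacle. A secondary complication is the clean conversion between $\Sigma$ and $e_{\Hsf_{r,t,S}}$, which must be done carefully so as not to dilute the $(1-\epsilon)$ factor. Assembling the main term, the distinctness error, and the expander-controlled Fourier error, the total discrepancy is at most $(1-\epsilon)\cdot 2r\cdot|S|^{|\mathcal P|}(|V_1|\cdots|V_r|)^{1/r}$, matching the claimed bound.
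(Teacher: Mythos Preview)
Your approach has a genuine gap: the multilinear Fourier inequality you isolate as the ``main technical obstacle,''
\[
2^t\sum_{\alpha:\,\alpha_1+\cdots+\alpha_r=0}\prod_{i=1}^r\bigl|\widehat{\mathbf 1_{V_i}}(\alpha_i)\bigr|\ \le\ (|V_1|\cdots|V_r|)^{1/r},
\]
is simply false for $r\ge 3$, and no amount of iterated Cauchy--Schwarz will rescue it. Take $V_1=\cdots=V_r=V$ where $V$ has an essentially flat Fourier transform, so that $|\widehat{\mathbf 1_V}(\alpha)|\approx\sqrt{|V|}/2^t$ for all $\alpha\ne 0$. Then the left side is of order $2^t\cdot(2^t)^{r-1}\cdot(|V|^{1/2}/2^t)^r=|V|^{r/2}$, whereas the right side is $|V|$. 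The correct upper bound for that absolute-value sum is $(|V_1|\cdots|V_r|)^{1/2}$, which is what a single Cauchy--Schwarz plus Parseval gives and which is sharp. The reason your scheme overshoots is that after pulling out a single factor $(1-\epsilon)|S|$ from one $\sigma(\beta_{\{j\}})$ and bounding every other $|\sigma(\beta_I)|$ by $|S|$, you have discarded all the remaining cancellation coming from the generating set; the residual sum over characters then genuinely has size $(|V_1|\cdots|V_r|)^{1/2}$ in the worst case.

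The paper avoids this by never passing to absolute values globally. It builds the count one coordinate at a time: with $f^k(V_1,\dots,V_k)$ counting partial tuples, each step from $f^k$ to $f^{k+1}$ is a single application of the expander mixing lemma in $\Cay(\F_2^t,S)^m$ between a multiset $W_k$ (of size $f^k$) and $V_{k+1}$, yielding an error $\le(1-\epsilon)|S|^{\bullet}\sqrt{|V_1|\,|V_{k+1}|}$. Telescoping gives an error $\le(1-\epsilon)(r-1)|S|^{|\mathcal P|}\sqrt{|V_1|\,|V_r|}$ (Lemma~\ref{thm:disc-with-rep}); the distinctness constraint on $\s$ is then removed by M\"obius inversion over partitions of $\mathcal P$ (Proposition~\ref{thm:discrepancy}), which costs only a factor of $2$. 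Finally, since $e_{\Hsf_{r,t,S}}(V_1,\dots,V_r)$ is symmetric in the $V_i$, one may permute the labels and take the minimum of $\sqrt{|V_{\sigma(1)}|\,|V_{\sigma(r)}|}$ over $\sigma$, which is at most $(|V_1|\cdots|V_r|)^{1/r}$. Note also that, by Lemmas~\ref{thm:distinct-tuple} and~\ref{thm:symmetric-edges}, your quantity $\Sigma$ equals $e_{\Hsf_{r,t,S}}(V_1,\dots,V_r)$ exactly under Assumption~\ref{asmp2}; there is no $r!$ multiplicity to absorb, which matters since the constant $2r$ in the statement is explicit.
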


When $\epsilon$ is sufficiently close to $1$ in terms of $r$, an argument in \cite{P17, PRT16} allows us to deduce that $\Hsf_{r,t,S}$ is a geometric expander. The idea is simple. By a result of Pach~\cite{P98}, for any set of $n$ points in $\R^{r-1}$, there are $r$ sets $A_1, \dots, A_r$, each of order at least $cn$, and a point $p \in \R^{r-1}$ such that $p$ is contained in the convex hull of $\{a_1, \dots, a_r\}$ for all $a_1, \dots, a_r$ with $a_i \in A_i$ for $i = 1, \dots, r$. But Theorem~\ref{thm:avg-discrepancy} implies that a positive fraction of the edges in $\Hsf_{r,t,S}$ have one vertex in each of $A_1, \dots, A_r$. Since all of these contain $p$, the desired conclusion follows.

\subsection{Proof techniques (the $r=4$ case)}
\label{ssec:examp}

We illustrate the main features of the construction by exploring the $r=4$ case. Fix $S\subset\F_2^t$ such that the sum $s_1+\cdots+s_\ell$ is different for every choice of $0\leq\ell\leq 16$ and $s_1,\ldots,s_\ell\in S$ distinct. Define $$\mathcal T=\{(s_1,s_2,s_3,s_4,s_{12},s_{13},s_{14})\in S^7:s_1,s_2,s_3,s_4,s_{12},s_{13},s_{14}\text{ distinct}\}.$$ 
Then for $x\in\F_2^t$ and $\s\in\mathcal T$, define the ordered 4-tuple
\[e(x,\s)=\begin{array}{llllllllll}
( & x & +s_1 & & & & +s_{12} & +s_{13} & +s_{14} &,\\
& x & & +s_2 & & & +s_{12} & & &,\\
& x & & & +s_3 & & & +s_{13} & &,\\
& x & & & & +s_4 & & & +s_{14} &).
\end{array}\]
The 4-uniform hypergraph $\Hsf_{4,t,S}$ has vertices $\F_2^t$ and 4-edges $\{v_1,v_2,v_3,v_4\}$ where $e(x,\s)=(v_1,v_2,v_3,v_4)$ for some $x\in\F_2^t$ and $\s\in\mathcal T$. 

At first sight it might seem that there is a distinguished vertex in each 4-edge, but this is an artifact of our presentation. For example, consider the ordered 4-tuple
\begin{equation}\label{eq:symmetric-vertices}
\begin{array}{llllllllll}
( & x & & +s_2 & & & +s_{12} & & &,\\
& x & +s_1 & & & & +s_{12} & +s_{13} & +s_{14} &,\\
& x & & & +s_3 & & & +s_{13} & &,\\
& x & & & & +s_4 & & & +s_{14} &).
\end{array}
\end{equation}
If we define $\tbf$ by $t_1=s_2$, $t_2=s_1$, $t_3=s_3$, $t_4=s_4$, $t_{12}=s_{12}$, $t_{13}=s_{14}$, $t_{14}=s_{13}$, and set $y=x+s_{13}+s_{14}$, then one can easily check that
\begin{equation*}
e(y,\tbf)=
\begin{array}{llllllllll}
( & x+s_{13}+s_{14} & +s_2 & & & & +s_{12} & +s_{14} & +s_{13} &,\\
& x+s_{13}+s_{14} & & +s_1 & & & +s_{12} & & &,\\
& x+s_{13}+s_{14} & & & +s_3 & & & +s_{14} & &,\\
& x+s_{13}+s_{14} & & & & +s_4 & & & +s_{13} &)
\end{array}
\end{equation*}
is exactly the same as (\ref{eq:symmetric-vertices}).

Define the graph $\G_{4,t,S}$ to have vertex set $\F_2^t\times\mathcal T$ with an edge between $(x,\s)$ and $(y,\tbf)$ if $e(x,\s)$ and $e(y,\tbf)$ agree in all but one coordinate. This graph is dual to $\Gw(\Hsf_{4,t,S})$ in a sense that will be made precise later on. A theme that recurs throughout this paper is that we will typically prove that $\G_{4,t,S}$ has some desired property and then deduce that the same property holds for $\Gw(\Hsf_{4,t,S})$.

In this case, we wish to prove that $\Gw(\Hsf_{4,t,S})$ is an expander, so we might want to first prove that $\G_{4,t,S}$ is an expander. Let us start with the easier problem of showing that $\G_{4,t,S}$ is connected.

Start at the vertex $(x,\s)=(x,(s_1,s_2,s_3,s_4,s_{12},s_{13},s_{14}))$. This vertex is adjacent to $(x,\s^{(1)})=(x,(t_{12},s_2,s_3,s_4,s_{12},s_{13},s_{14}))$ since
\begin{equation}\label{eq:move-0}
\begin{split}
e(x,\s)&=\begin{array}{llllllllll}
( & x & +s_1 & & & & +s_{12} & +s_{13} & +s_{14}&,\\
& x & & +s_2 & & & +s_{12} & & &,\\
& x & & & +s_3 & & & +s_{13} & &,\\
& x & & & & +s_4 & & & +s_{14} &),
\end{array}
\\
\\
e(x,\s^{(1)})&=\begin{array}{llllllllll}
( & x & +t_{12} & & & & +s_{12} & +s_{13} & +s_{14}&,\\
& x & & +s_2 & & & +s_{12} & & &,\\
& x & & & +s_3 & & & +s_{13} & &,\\
& x & & & & +s_4 & & & +s_{14} &)
\end{array}
\end{split}
\end{equation}
agree in all but the first coordinate. 

Vertex $(x,\s^{(1)})$ is adjacent to $(x,\s^{(2)})=(x,(s_{12},s_2,s_3,s_4,t_{12},s_{13},s_{14}))$ since
\begin{equation}\label{eq:move-1}
\begin{split}
e(x,\s^{(1)})&=\begin{array}{llllllllll}
( & x & +t_{12} & & & & +s_{12} & +s_{13} & +s_{14}&,\\
& x & & +s_2 & & & +s_{12} & & &,\\
& x & & & +s_3 & & & +s_{13} & &,\\
& x & & & & +s_4 & & & +s_{14} &),
\end{array}
\\
\\
e(x,\s^{(2)})&=\begin{array}{llllllllll}
( & x & +s_{12} & & & & +t_{12} & +s_{13} & +s_{14}&,\\
& x & & +s_2 & & & +t_{12} & & &,\\
& x & & & +s_3 & & & +s_{13} & &,\\
& x & & & & +s_4 & & & +s_{14} &)
\end{array}
\end{split}
\end{equation}
agree in all but the second coordinate.

Repeating these two steps two more times produces a path of length four from $(x,\s^{(2)})$ to $(x,\s^{(6)})=(x,(s_{14},s_2,s_3,s_4,t_{12},t_{13},t_{14}))$, where $t_{12},t_{13},t_{14}$ are arbitrary elements of $S$. Repeating a step similar to (\ref{eq:move-0}) four times produces a path of length four from $(x,\s^{(6)})$ to $(x,\s^{(10)})=(x,(t_1,t_2,t_3,t_4,t_{12},t_{13},t_{14}))$, where $t_1, t_2,\ldots,t_{14}$ are arbitrary.

We call this procedure `Boolean bubbling' because it has the following interpretation. The coordinates of $\s$ are indexed by the bottom half of the Boolean lattice of rank four. Now the above procedure allows us to insert a new value at the bottom of the Boolean lattice (\ref{eq:move-0}) and then `bubble' it up the lattice (\ref{eq:move-1}). Repeating this procedure allows us to change the value of $\s$ arbitrarily.\footnote{Note that (\ref{eq:move-0}) and (\ref{eq:move-1}) are essentially the same operation. The latter switches two coordinates, $s_I$ and $s_{I\sqcup\{j\}}$, while the former can be thought of as switching $s_{\emptyset}$ and $s_j$. Since $s_{\emptyset}$ does not affect the value of $e(x,\s)$, its value can be changed arbitrarily. We take advantage of this to insert new values.}

\begin{center}
\begin{tikzpicture}[scale=0.45]
  \node (Aa) at (-2-10,2) {$s_{12}$};
  \node (Ab) at (-10,2) {$s_{13}$};
  \node (Ac) at (2-10,2) {$s_{14}$};
  \node (Ad) at (-3-10,0) {$s_1$};
  \node (Ae) at (-1-10,0) {$s_2$};
  \node (Af) at (1-10,0) {$s_3$};
  \node (Ag) at (3-10,0) {$s_4$};
  \node (Ah) at (0-10,-2) {$\emptyset$};
  \draw (Aa) -- (Ad) -- (Ah) -- (Ae)
  (Ab) -- (Ad) -- (Ac) -- (Ag) -- (Ah) -- (Af);
  \draw[preaction={draw=white, -,line width=2pt}] (Aa) -- (Ae)
  (Ab) -- (Af);

  \node (Ba) at (-2,2) {$s_{12}$};
  \node (Bb) at (0,2) {$s_{13}$};
  \node (Bc) at (2,2) {$s_{14}$};
  \node (Bd) at (-3,0) {$t$};
  \node (Be) at (-1,0) {$s_2$};
  \node (Bf) at (1,0) {$s_3$};
  \node (Bg) at (3,0) {$s_4$};
  \node (Bh) at (0,-2) {$\emptyset$};
  \draw (Ba) -- (Bd) -- (Bh) -- (Be)
  (Bb) -- (Bd) -- (Bc) -- (Bg) -- (Bh) -- (Bf);
  \draw[preaction={draw=white, -,line width=2pt}] (Ba) -- (Be)
  (Bb) -- (Bf);
  \filldraw[fill=gray!40] (Bd) circle (18pt);
  \node at (Bd) {$a$};
  
  \node (Ca) at (-2+10,2) {$t$};
  \node (Cb) at (10,2) {$s_{13}$};
  \node (Cc) at (2+10,2) {$s_{14}$};
  \node (Cd) at (-3+10,0) {$s_{12}$};
  \node (Ce) at (-1+10,0) {$s_2$};
  \node (Cf) at (1+10,0) {$s_3$};
  \node (Cg) at (3+10,0) {$s_4$};
  \node (Ch) at (0+10,-2) {$\emptyset$};
  \draw (Ca) -- (Cd) -- (Ch) -- (Ce)
  (Cb) -- (Cd) -- (Cc) -- (Cg) -- (Ch) -- (Cf);
  \draw[preaction={draw=white, -,line width=2pt}] (Ca) -- (Ce)
  (Cb) -- (Cf);
  \filldraw[fill=gray!40] (Ca) circle (18pt);
  \node at (Ca) {$a$};
  
  \path [->] (-6,1.5) edge [line width=0.8, bend left=60] (-4,1.5);
  \path [->] (4,1.5) edge [line width=0.8, bend left=60] (6,1.5);
  \node at (-5,2.7) {$(\ref{eq:move-0})$};
  \node at (5,2.7) {$(\ref{eq:move-1})$};
\end{tikzpicture}
\end{center}

Let us pause to talk about two details of this procedure that we have not mentioned yet. First, we have to make sure that each new value we `bubble' up the lattice does not disturb previous values we have inserted. This is an easy problem to deal with: as long as we bubble up values to the top level before dealing with the lower level, future steps will not disturb previous ones. Second, recall that the coordinates of $\s$ must be distinct at every step of the process. This might be a problem if we wish to find a path from $(x,\s)$ to $(x,\tbf)$ where, for example, $s_4=t_{12}$. The easiest way to get around this problem is to pick a new $\ubf$ whose coordinates are disjoint from both $\s$ and $\tbf$ and then use the Boolean bubbling procedure to construct a path from $(x,\s)$ to $(x,\ubf)$ and then to $(x,\tbf)$.

Boolean bubbling allows us to construct a path in $\G_{4,t,S}$ from a vertex $(x,\s)$ to any vertex of the form $(x,\tbf)$. To walk to an arbitrary $(y,\tbf)$, we only need one further ingredient.

Given a vertex $(x,\tbf)=(x,(t_1,t_2,t_3,t_4,t_{12},t_{13},t_{14}))$, this vertex is adjacent to $(x',\tbf')=(x+t_3+t_{14},(t_1,t_2,t_{14},t_4,t_{12},t_{13},t_3))$ since 
\begin{equation}\label{eq:move-2}
\begin{split}
e(x,\tbf)&=\begin{array}{llllllllll}
( & x & +t_1 & & & & +t_{12} & +t_{13} & +t_{14}&,\\
& x & & +t_{2} & & & +t_{12} & & &,\\
& x & & & +t_3 & & & +t_{13} & &,\\
& x & & & & +t_4 & & & +t_{14} &),
\end{array}
\\
\\
e(x',\tbf')&=\begin{array}{llllllllll}
( & x+t_3+t_{14} & +t_1 & & & & +t_{12} & +t_{13} & +t_3&,\\
& x+t_3+t_{14} & & +t_2 & & & +t_{12} & & &,\\
& x+t_3+t_{14} & & & +t_{14} & & & +t_{13} & &,\\
& x+t_3+t_{14} & & & & +t_4 & & & +t_3 &)
\end{array}
\end{split}
\end{equation}
agree in all but the second coordinate.

Now we claim that for any $x,y$ which are adjacent in $\Cay(\F_2^t,(S+S)\setminus\{0\})$, there is a path from $(x,\s)$ to $(y,\tbf)$ in $\G_{4,t,S}$. This follows using Boolean bubbling and (\ref{eq:move-2}). Write $y=x+a_1+a_2$ for $a_1, a_2$ distinct elements of $S$. By Boolean bubbling, there is a path from $(x,\s)$ to some $(x,\ubf)$ where $u_3=a_1$ and $u_{14}=a_2$. Then $(x,\ubf)$ is adjacent to $(y,\ubf')$ for some $\ubf'$ by (\ref{eq:move-2}). Finally, another application of Boolean bubbling gives a path from $(y,\ubf')$ to $(y,\tbf)$. Since Assumption~\ref{asmp3} easily implies that $\Cay(\F_2^t,(S+S)\setminus\{0\})$ is also an expander, and in particular connected, this argument shows that $\G_{4,t,S}$ is connected as well.

Here is a convenient rephrasing of this argument which shows how we may extend it to prove that $\G_{4,t,S}$ is an expander. Define $G_{\Cay}$ to be the graph with vertex set $\F_2^t\times\mathcal T$ with an edge between $(x,\s)$ and $(y,\tbf)$ if $x,y$ are adjacent in $\Cay(\F_2^t,(S+S)\setminus\{0\})$. We write $\G_{4,t,S}^M$ for the multigraph which has one edge between $(x,\s)$ and $(y,\tbf)$ for each walk of length $M$ between these vertices in $\G_{4,t,S}$. The argument above shows that for some $M$ there is a copy of $G_{\Cay}$ contained in $\G_{4,t,S}^M$. Since $G_{\Cay}$ is connected, this proves that $\G_{4,t,S}$ is connected.

To prove that $\G_{4,t,S}$ is an expander we need a slightly stronger fact. Write $c\cdot G_{\Cay}$ for the multigraph that has $c$ edges for each edge of $G_{\Cay}$. We need to prove that for some $M$ and $c$ the graphs $\G_{4,t,S}^M$ and $c\cdot G_{\Cay}$ are approximately the same in some appropriate sense. Establishing this fact requires showing that for each $(x,\s)$ and $(y,\tbf)$ adjacent in $G_{\Cay}$ there are many paths between them in $\G_{4,t,S}$. This can be done with a small modification of (\ref{eq:move-1}) and (\ref{eq:move-2}).

Suppose we are in the process of Boolean bubbling, having inserted $t_{12}$ at the bottom of the Boolean lattice. Let $(x,\s)=(x,(t_{12},s_2,s_3,s_4,s_{12},s_{13},s_{14}))$ be the current state. We would normally proceed as in $(\ref{eq:move-1})$ to bubble $t_{12}$ up the lattice. Instead, what we do is simultaneously bubble $t_{12}$ up and insert an arbitrary element $a$ in place of $s_2$. Explicitly, if we set $(x,\s')=(x,(s_{12},a,s_3,s_4,t_{12},s_{13},s_{14}))$, then $(x, \s)$ and $(x, \s')$ are adjacent since
\begin{equation}\label{eq:move-1'}\tag{$\ref{eq:move-1}^\prime$}
\begin{split}
e(x,\s)&=\begin{array}{llllllllll}
( & x & +t_{12} & & & & +s_{12} & +s_{13} & +s_{14}&,\\
& x & & +s_2 & & & +s_{12} & & &,\\
& x & & & +s_3 & & & +s_{13} & &,\\
& x & & & & +s_4 & & & +s_{14} &),
\end{array}
\\
\\
e(x,\s')&=\begin{array}{llllllllll}
( & x & +s_{12} & & & & +t_{12} & +s_{13} & +s_{14}&,\\
& x & & +a & & & +t_{12} & & &,\\
& x & & & +s_3 & & & +s_{13} & &,\\
& x & & & & +s_4 & & & +s_{14} &)
\end{array}
\end{split}
\end{equation}
still agree in all but the second coordinate. Replacing every move of type (\ref{eq:move-1}) with one of type (\ref{eq:move-1'}) in the Boolean bubbling procedure gives not one path of length $M$ from $(x,\s)$ to $(x,\tbf)$, but instead on the order of $|S|^{M-7}$ such paths. Here the exponent $M - 7$ comes from the fact that our modified Boolean bubbling procedure gives on the order of $|S|^M$ paths of length $M$ starting at $(x,\s)$ and ending at a vertex of the form $(x,\tbf)$, while the number of choices for $\tbf$ is on the order of $|S|^7$. The full details of this procedure in the general $r$-uniform construction are given in Section \ref{ssec:bubble}.

Similarly, we upgrade the moves of type $(\ref{eq:move-2})$ by inserting an arbitrary element $a$ at the bottom of the lattice. In particular, suppose we are given a vertex $(x,\tbf)=(x,(t_1,t_2,t_3,t_4,t_{12},t_{13},t_{14}))$. This vertex is adjacent to $(x',\tbf')=(x+t_3+t_{14},(t_1,a,t_{14},t_4,t_{12},t_{13},t_3))$ since 
\begin{equation}\label{eq:move-2'}\tag{$\ref{eq:move-2}^\prime$}
\begin{split}
e(x,\tbf)&=\begin{array}{llllllllll}
( & x & +t_1 & & & & +t_{12} & +t_{13} & +t_{14}&,\\
& x & & +t_{2} & & & +t_{12} & & &,\\
& x & & & +t_3 & & & +t_{13} & &,\\
& x & & & & +t_4 & & & +t_{14} &),
\end{array}
\\
\\
e(x',\tbf')&=\begin{array}{llllllllll}
( & x+t_3+t_{14} & +t_1 & & & & +t_{12} & +t_{13} & +t_3&,\\
& x+t_3+t_{14} & & +a & & & +t_{12} & & &,\\
& x+t_3+t_{14} & & & +t_{14} & & & +t_{13} & &,\\
& x+t_3+t_{14} & & & & +t_4 & & & +t_3 &)
\end{array}
\end{split}
\end{equation}
still agree in all but the second coordinate.
This is the only additional ingredient that we need to deduce that $\G_{4,t,S}$ is an expander from the fact that $G_{\Cay}$ is an expander. 

To complete the proof, all that remains is to deduce the corresponding result for $\Gw(\Hsf_{4,t,S})$. This will be immediate once we write down exactly in what sense $\G_{4,t,S}$ and $\Gw(\Hsf_{4,t,S})$ are dual to each other. We refer the reader to Section \ref{ssec:spectral} for the details.

\section{Proof of main theorem}
\label{sec:proof}

\subsection{Degree properties}
\label{ssec:basic}

We start by proving a couple of basic properties of our hypergraph $\Hsf_{r,t,S}$. We find the number of representations each $r$-edge has of the form $e(x,\s)$ and compute the number of $r$-edges of $\Hsf_{r,t,S}$ containing a given $(r-1)$-edge.

\begin{lemma} \label{thm:distinct-tuple}
Recall $\mathcal T$ from $(\ref{eq:T})$. For $r,t,S$ satisfying Assumption \ref{asmp2}, the ordered $r$-tuples $e(x,\s)$ for $x\in \F_2^t$ and $\s\in \mathcal T$ are all distinct.
\end{lemma}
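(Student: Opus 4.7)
The plan is to suppose $e(x,\s) = e(y,\tbf)$ and deduce $x=y$ and $\s = \tbf$. First, writing $u = x+y$ and $a_k = \sum_{I \in \mathcal P,\, k \in I} s_I$, $b_k = \sum_{I \in \mathcal P,\, k \in I} t_I$, the hypothesis says $u = a_k + b_k$ for every $k \in [r]$.

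Next I would convert each such equation into a subset sum in $S$. Because $\s$ and $\tbf$ each have distinct coordinates, the sets $A_k = \{s_I : I \in \mathcal P,\, k \in I\}$ and $B_k = \{t_I : I \in \mathcal P,\, k \in I\}$ are subsets of $S$, and over $\F_2$ the elements of $A_k \cap B_k$ cancel out of $a_k + b_k$, giving $u = \sum_{s \in A_k \triangle B_k} s$. Since $|A_k \triangle B_k| \le 2|\mathcal P| = 2^r - 2 \le 2^r$, Assumption~\ref{asmp2} guarantees that the subset $A_k \triangle B_k$ is uniquely determined by the value of the sum $u$. Consequently, $A_k \triangle B_k$ is the same subset $D \subseteq S$ for every $k \in [r]$.

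The hard part will be deducing $\s = \tbf$ from this $k$-independence, which is where the specific structure of $\mathcal P$ enters. For each $s \in S$, let $I(s) \in \mathcal P \cup \{\emptyset\}$ be the $I$ with $s_I = s$ if such an $I$ exists, else $I(s) = \emptyset$; define $J(s)$ analogously from $\tbf$. Then $s \in A_k \triangle B_k$ if and only if $k \in I(s) \triangle J(s)$, so the fact that $A_k \triangle B_k$ does not depend on $k$ forces $I(s) \triangle J(s) \in \{\emptyset, [r]\}$ for every $s$. To rule out the case $I(s) \triangle J(s) = [r]$ I would use that every element of $\mathcal P$ has size at most $\lfloor r/2 \rfloor$, so $|I(s)| + |J(s)| \le r$, with equality possible only when $|I(s)| = |J(s)| = r/2$ and $I(s), J(s)$ are disjoint; for odd $r$ this is immediately ruled out by the size bound, while for even $r$ both $r/2$-element sets of $\mathcal P$ must contain $1$ by definition, contradicting disjointness.

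Hence $I(s) = J(s)$ for every $s \in S$, which is equivalent to $s_I = t_I$ for every $I \in \mathcal P$, i.e., $\s = \tbf$. Then $a_k = b_k$, so $u = 0$ and $x = y$, finishing the proof.
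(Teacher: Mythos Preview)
Your proof is correct and takes a genuinely different route from the paper's. The paper eliminates $x$ at the outset by forming $e_i(x,\s)+e_j(x,\s)=e_i(x',\s')+e_j(x',\s')$ for each pair $i<j$, then invokes Assumption~\ref{asmp2} to obtain the set equalities $\{s_I:|\{i,j\}\cap I|=1\}=\{s'_I:|\{i,j\}\cap I|=1\}$, and finally separates the coordinates by observing that the ``cut pattern'' $(I\times I^c)\cup(I^c\times I)$ uniquely determines $I$ among elements of $\mathcal P$. You instead keep $u=x+y$ as an unknown constant and work one coordinate at a time: Assumption~\ref{asmp2} tells you the subset $A_k\triangle B_k$ is the same for every $k$, and translating this into the dual language of $I(s)\triangle J(s)$ reduces the problem to ruling out complementary pairs in $\mathcal P\cup\{\emptyset\}$. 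Both arguments ultimately rest on the same structural fact about $\mathcal P$ (that it contains no complementary pair), but your single-index approach is a bit more direct and the ``dual'' viewpoint---indexing by elements $s\in S$ rather than by $I\in\mathcal P$---is a nice alternative.
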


\begin{proof}
Suppose $e(x,\s)=e(x',\s')$ for $x,x'\in\F_2^t$ and $\s,\s'\in\mathcal T$. Then this implies that
\[e_i(x,\s)+e_j(x,\s)=e_i(x',\s')+e_j(x',\s')\]
for each $1\leq i<j\leq r$. We rewrite this equation as
\[\sum_{I\in\mathcal P\atop{|\{i,j\}\cap I|=1}}s_I=\sum_{I\in\mathcal P\atop{|\{i,j\}\cap I|=1}}s'_I.\] 
By Assumption \ref{asmp2} on $S$, this implies that
\begin{equation}
\label{eq:dt}\{s_I:I\in\mathcal P\text{ with }|\{i,j\}\cap I|=1\}=\{s'_I:I\in\mathcal P\text{ with }|\{i,j\}\cap I|=1\}
\end{equation} 
for each $1\leq i<j\leq r$.\

We claim that the above equation implies that $\s=\s'$. To see this, fix $I\subseteq[r]$ and consider the pairs $(i,j)\in[r]^2$ such that $i\neq j$ and $|\{i,j\}\cap I|=1$. This condition is satisfied if and only if $(i,j)\in (I\times I^c)\cup (I^c\times I)$. Note that the sets $(I\times I^c)\cup (I^c\times I)$ are distinct for all $I\in\mathcal P$ and are non-empty. This is true simply because $\emptyset,[r]\not\in\mathcal P$ and if $I\in\mathcal P$ then one has $I^c\not\in\mathcal P$.

Thus for $I\in\mathcal P$ the element $s_I$ is present on the left-hand side of $(\ref{eq:dt})$ if and only if $(i,j)\in (I\times I^c)\cup (I^c\times I)$ and no other $s_J$ has the same property. Furthermore, $s'_I$ is present on the right-hand side of $(\ref{eq:dt})$ if and only if $(i,j)\in (I\times I^c)\cup (I^c\times I)$ and no other $s'_J$ has the same property. Since the left- and right-hand sides of $(\ref{eq:dt})$ are equal for all $1\leq i<j\leq r$, this implies that $s_I=s'_I$ for all $I\in\mathcal P$.

To complete the proof, note that 
\[x=e_1(x,\s)+\sum_{I\in P\atop{1\in I}}s_I=e_1(x',\s')+\sum_{I\in P\atop{1\in I}}s'_I=x',\]
as required.
\end{proof}

This implies that given an $r$-edge $e=\{v_1,\ldots,v_r\}$ of $\Hsf_{r,t,S}$ there are at most $r!$ possible $e(x,\s)$ that it corresponds to, one for each ordering of its vertices. We claim that this is an equality. This is immediate for $r$ odd. Suppose $e(x,\s)=(v_1,\ldots,v_r)$. For a permutation $\pi\in\mathfrak S_r$ define $\s^{\pi}$ by $s^{\pi}_I=s_{\pi(I)}$ where $\pi(I)=\{\pi(i):i\in I\}$. Then $e(x,\s^{\pi})=(v_{\pi(1)},\ldots,v_{\pi(r)})$. This argument does not work for $r$ even since for some $I\in\mathcal P$, there are $\pi$ such that $\pi(I)\not\in\mathcal P$. However, the same result follows easily from the next lemma.

\begin{lemma}
\label{thm:symmetric-edges}
Fix $r$ even and $S\subset\F_2^t$. Recall $\mathcal T$ from (\ref{eq:T}). For each $(x,\s)\in\F_2^t\times\mathcal T$, there exists $(y,\tbf)\in\F_2^t\times\mathcal T$ such that $e(y,\tbf)$ is the same as $e(x,\s)$ but with the first two coordinates swapped.
\end{lemma}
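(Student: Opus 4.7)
The plan is to construct $(y,\tbf)$ explicitly, mirroring the $r=4$ swap worked out in~(\ref{eq:symmetric-vertices}). Let $\sigma=(1\;2)\in\mathfrak S_r$ denote the transposition of coordinates $1$ and $2$. I will define an involution $\tau\colon\mathcal P\to\mathcal P$, take $t_I=s_{\tau(I)}$, and then choose $y$ so that $e_i(y,\tbf)=e_{\sigma(i)}(x,\s)$ for every $i\in[r]$.

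Specifically, I would set
\[
\tau(I)=\begin{cases}\sigma(I) & \text{if }\sigma(I)\in\mathcal P,\\ \sigma(I)^c & \text{otherwise.}\end{cases}
\]
The first task is to verify that $\tau$ is a well-defined involution $\mathcal P\to\mathcal P$. For $I$ with $|I|<r/2$ one has $\sigma(I)\in\mathcal P$ automatically, and for $|I|=r/2$ the only problematic case is $1\in I$, $2\notin I$, in which $\sigma(I)^c$ has size $r/2$ and contains $1$, hence lies in $\mathcal P$. A direct case analysis then shows $\tau\circ\tau=\mathrm{id}$. Consequently $\tau$ is a bijection, which forces the coordinates of $\tbf=(s_{\tau(I)})_{I\in\mathcal P}$ to be distinct; thus $\tbf\in\mathcal T$.

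With $\tbf$ pinned down, the requirement $e_i(y,\tbf)=e_{\sigma(i)}(x,\s)$ unfolds to
\[
y+\sum_{\substack{I\in\mathcal P\\ i\in I}}s_{\tau(I)}=x+\sum_{\substack{J\in\mathcal P\\ \sigma(i)\in J}}s_J\qquad\text{in }\F_2^t.
\]
Re-indexing the left-hand sum via $J=\tau(I)$, this condition becomes
\[
y=x+\sum_{J\in\mathcal P}\bigl(\mathbf{1}[\sigma(i)\in J]+\mathbf{1}[i\in\tau(J)]\bigr)\,s_J.
\]
I would then check that when $\tau(J)=\sigma(J)$ the two indicators agree (since $\sigma$ is an involution), so $J$ contributes $0$; and when $\tau(J)=\sigma(J)^c$ the indicators are opposite, so $J$ contributes $s_J$. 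The right-hand side is therefore independent of $i$ and equals $\sum_{J\in\mathcal P,\,\sigma(J)\notin\mathcal P}s_J=\sum_{|J|=r/2,\,1\in J,\,2\notin J}s_J$, so taking $y$ to be $x$ plus this sum completes the construction.

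The hard part is the $\F_2$ cancellation in the last display; the definition of $\tau$ is engineered precisely so that each $J\in\mathcal P$ contributes a quantity independent of $i$. As a sanity check, for $r=4$ this recipe recovers the example in~(\ref{eq:symmetric-vertices}): $\tau$ swaps $\{1\}\leftrightarrow\{2\}$ and $\{1,3\}\leftrightarrow\{1,4\}$, fixes $\{3\},\{4\},\{1,2\}$, and yields $y=x+s_{13}+s_{14}$.
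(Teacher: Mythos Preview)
Your construction is correct and is essentially the same as the paper's: your map $\tau$ coincides with the paper's case-by-case definition (the paper partitions $\mathcal P$ into $\mathcal P_1,\mathcal P_2,\mathcal P_3$ and sets $t_I=s_{\pi(I)}$ on $\mathcal P_1\cup\mathcal P_2$ and $t_I=s_{\pi(I^c)}$ on $\mathcal P_3$, which is exactly your $\tau$), and your $y=x+\sum_{|J|=r/2,\,1\in J,\,2\notin J}s_J$ is the paper's $y=x+\sum_{I\in\mathcal P_3}s_I$. The only cosmetic difference is that the paper verifies $e_i(y,\tbf)=e_{\pi(i)}(x,\s)$ directly, whereas you solve for $y$ and show the resulting expression is independent of $i$; these are the same computation read in opposite directions.
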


\begin{proof}
Let $\pi$ be the transposition that swaps 1 and 2. Write $\mathcal P=\mathcal P_1\sqcup\mathcal P_2\sqcup\mathcal P_3$, where $\mathcal P_1=\{I\subset[r]:0<|I|<r/2\}$, $\mathcal P_2=\{I\subset[r]:|I|=r/2\text{ and }1,2\in I\}$ and $\mathcal P_3=\{I\subset[r]:|I|=r/2, 1\in I\text{ and }2\not\in I\}$. Note that for $I\in\mathcal P_1$ one has $\pi(I)\in\mathcal P_1$, while for $I\in\mathcal P_2$ one has $\pi(I)=I\in\mathcal P_2$. Finally, note that for $I\in\mathcal P_3$ one has $\pi(I^c)\in\mathcal P_3$.

Now let
\[t_I=
\begin{cases}s_{\pi(I)}\qquad&\text{if }I\in\mathcal P_1\cup\mathcal P_2,\\
s_{\pi(I^c)}&\text{if }I\in\mathcal P_3\end{cases}\]
and 
\[y=x+\sum_{I\in\mathcal P_3}s_I.\] 
We claim that $e(y,\tbf)$ has the desired property. We refer the reader to (\ref{eq:symmetric-vertices}) for the $r=4$ case. 
For the general case, note that for $I\in\mathcal P_1\cup\mathcal P_2$ one has $i\in I$ if and only if $\pi(i)\in\pi(I)$. Moreover, for $I\in\mathcal P_3$ one has $i\in I$ if and only if $\pi(i)\not\in\pi(I^c)$. Therefore,
\begin{align*}
e_i(y,\tbf)
&=x+\sum_{I\in\mathcal P_3}s_I+\sum_{I\in\mathcal P_1\cup\mathcal P_2\atop{i\in I}}s_{\pi(I)}+\sum_{I\in\mathcal P_3\atop{i\in I}}s_{\pi(I^c)}\\
&=x+\sum_{I\in\mathcal P_3}s_I+\sum_{I\in\mathcal P_1\cup\mathcal P_2\atop{\pi(i)\in I}}s_{I}+\sum_{I\in\mathcal P_3\atop{\pi(i)\not\in I}}s_{I}\\
&=x+\sum_{I\in\mathcal P_3\atop{\pi(i)\in I}}s_I+\sum_{I\in\mathcal P_1\cup\mathcal P_2\atop{\pi(i)\in I}}s_{I}\\
&=e_{\pi(i)}(x,\s).\qedhere
\end{align*}
\end{proof}

Next we use a similar technique to the proof of Lemma \ref{thm:distinct-tuple} to compute the degree of each $(r-1)$-edge in the hypergraph $\Hsf_{r,t,S}$. Recall that the degree of an $(r-1)$-edge is the number of $r$-edges that it is contained in. Note that in light of Lemma \ref{thm:symmetric-edges} and the discussion preceding it, the degree of the $(r-1)$-edge formed by the set of all coordinates of $e(x,\s)$ but the $k$-th is exactly the number of $(x',\s')\in\F_2^t\times\mathcal T$ such that $e(x,\s)$ and $e(x',\s')$ agree in all but the $k$-th coordinate.

\begin{lemma}
\label{thm:degree}
Recall $\mathcal T$ from (\ref{eq:T}). For $r,t,S$ satisfying Assumption \ref{asmp2}, $(x,\s)\in\F_2^t\times\mathcal T$ and $1\leq k\leq r$, there are exactly\[\left(|S|-(2^{r-1}-2)\right)2^{2^{r-2}-1}\] pairs $(x',\s')\in\F_2^t\times\mathcal T$ such that $e(x,\s)$ and $e(x',\s')$ agree in all but (possibly) the $k$-th coordinate.
\end{lemma}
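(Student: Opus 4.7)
The plan is to reduce to $k=r$, extract the constraints, partition $\mathcal P$ into equivalence classes under a bipartition relation, isolate one constraint per class, and multiply. First, by Lemma \ref{thm:symmetric-edges} together with the analogous argument for $r$ odd that precedes it, any permutation $\pi\in\mathfrak S_r$ of the coordinates of $e(x,\s)$ is realized by some $(y,\tbf)\in\F_2^t\times\mathcal T$; composing with the transposition $(k\,r)$ lets me assume $k=r$.

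Next I would extract the constraints. Agreement at coordinate $i<r$ reads $x+\sum_{I\in\mathcal P,\,i\in I}s_I=x'+\sum_{I\in\mathcal P,\,i\in I}s'_I$, so subtracting the equations for $i,j\in[r-1]$ (in $\F_2^t$) gives $\sum_{I:\,|I\cap\{i,j\}|=1}s_I=\sum_{I:\,|I\cap\{i,j\}|=1}s'_I$. Since each side has fewer than $2^r$ distinct summands, Assumption \ref{asmp2} upgrades this to the set equality
\begin{equation}\label{eq:planset}
\{s_I:I\in\mathcal P,\,|I\cap\{i,j\}|=1\}=\{s'_I:I\in\mathcal P,\,|I\cap\{i,j\}|=1\}
\end{equation}
for every $1\le i<j\le r-1$.

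The combinatorial heart of the argument is a partition of $\mathcal P$ into equivalence classes by declaring $I\sim J$ iff $I$ and $J$ induce the same bipartition of $[r-1]$. A routine count shows every nontrivial bipartition of $[r-1]$ arises from exactly two elements of $\mathcal P$ (the two $\mathcal P$-representatives of the bipartitions of $[r]$ extending it), while the trivial bipartition comes only from $\{r\}$, yielding $2^{r-2}-1$ size-$2$ classes together with the singleton $\{\{r\}\}$. I would then show that for each size-$2$ class $\Pi_0=\{I,J\}$, $\{s_I,s_J\}=\{s'_I,s'_J\}$, by intersecting \eqref{eq:planset} over all pairs $(i,j)$ separated by $\Pi_0$: the only equivalence class whose set of separated pairs contains that of $\Pi_0$ is $\Pi_0$ itself, since one nontrivial bipartition of $[r-1]$ refining another must equal it. The singleton class never appears in \eqref{eq:planset}, so $s'_{\{r\}}$ is unconstrained save by distinctness.

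To finish, the count multiplies three factors: each of the $2^{r-2}-1$ size-$2$ classes contributes a factor of $2$ (identity or swap), for $2^{2^{r-2}-1}$ in total; $s'_{\{r\}}$ ranges over $|S|-(2^{r-1}-2)$ values (any element of $S$ distinct from the other $2^{r-1}-2$ entries of $\s'$); and $x'$ is uniquely determined, because setting $\delta_i:=\sum_{I\in\mathcal P,\,i\in I}(s_I+s'_I)$, the set equality \eqref{eq:planset} forces $\delta_i+\delta_j=0$ in $\F_2^t$ for all $i,j\in[r-1]$, so the $r-1$ equations $x'=x+\delta_i$ are mutually consistent. I expect the main obstacle to be the isolation step, which hinges on the bipartition-refinement observation above; everything else is bookkeeping.
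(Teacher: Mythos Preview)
Your proposal is correct and follows essentially the same route as the paper: both arguments partition $\mathcal P\setminus\{\{k\}\}$ into $2^{r-2}-1$ pairs according to the bipartition each set induces on $[r]\setminus\{k\}$, use Assumption~\ref{asmp2} to force $\{s_I,s_J\}=\{s'_I,s'_J\}$ on each pair, and count the swap choices together with the free choice of $s'_{\{k\}}$. The only cosmetic differences are that you first reduce to $k=r$ via the symmetry established around Lemma~\ref{thm:symmetric-edges} (the paper keeps $k$ general throughout), and you run the characterization before the construction, whereas the paper explicitly builds the $(x',\s')$ first and then proves completeness; your intersection-of-set-equalities isolation is a clean variant of the paper's ``signature $A_I$'' argument and rests on the same fact that no nontrivial bipartition of $[r-1]$ strictly refines another.
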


\begin{proof}
First we construct the desired number of pairs $(x',\s')$. To begin, note that for each $I\in\mathcal P$ exactly one of the following three statements holds:
\begin{enumerate}
    \item there exists a unique $J\in\mathcal P$ such that either $J=I\sqcup\{k\}$ or $I=J\sqcup\{k\}$;\footnote{We use the notation $J=I\sqcup\{k\}$ to imply that the sets $I$ and $\{k\}$ are disjoint and that $J$ is their union.}
    \item there exists $J\in\mathcal P$ such that $I\sqcup J=[r]\setminus\{k\}$;
    \item $I=\{k\}$.
\end{enumerate}
To see this, note that if $k \in I$, then we can let $J = I \setminus \{k\}$ except in the case $I = \{k\}$. On the other hand, if $k \notin I$, we can let $J = I \sqcup \{k\}$ unless $|I| = \lfloor r/2 \rfloor$ or $r$ is even, $k \neq 1$ and $|I| = r/2 - 1$ with $1 \notin I$. But in both these cases, we can take $J$ to be the complement of $I$ in $[r]\setminus\{k\}$.

For $I\in\mathcal P$ satisfying condition 1 and $J\in\mathcal P$ the unique set satisfying either $J=I\sqcup\{k\}$ or $I=J\sqcup\{k\}$, define $(x',\s')$ by\[x'=x\]and\[s'_K=\begin{cases}s_J\qquad&\text{if }K=I,\\s_I&\text{if }K=J,\\s_K&\text{otherwise.}\end{cases}\]Note that $e_i(x,\s)=e_i(x',\s')$ for $i\neq k$.

For $I\in\mathcal P$ satisfying condition 2 and $J\in\mathcal P$ given by $I\sqcup J=[r]\setminus\{k\}$, define $(x',\s')$ by \[x'=x+s_I+s_J\]and\[s'_K=\begin{cases}s_J\qquad&\text{if }K=I,\\s_I&\text{if }K=J,\\s_K&\text{otherwise.}\end{cases}\]Note that $e_i(x,\s)=e_i(x',\s')$ for $i\neq k$. For example, suppose $i\in I$. Then
\begin{align*}
e_i(x',\s')
&=x+s_I+s_J+\sum_{K\in\mathcal P\atop{i\in K}}s'_K\\
&=x+s_I+s_J+s'_I+\sum_{K\in\mathcal P\atop{i\in K\text{ and }K\neq I}}s'_K\\
&=x+s_I+\sum_{K\in\mathcal P\atop{i\in K\text{ and }K\neq I}}s_K\\
&=e_i(x,\s).
\end{align*}
A similar computation covers the complementary case $i\in J$.

For $I=\{k\}$ and any $a \in S$ such that $a\neq s_K$ for all $K\in\mathcal P\setminus\{\{k\}\}$, define $(x',\s')$ by \[x'=x\]and\[s'_K=\begin{cases}a\qquad&\text{if }K=I,\\s_K&\text{otherwise.}\end{cases}\]Note that $e_i(x,\s)=e_i(x',\s')$ for $i\neq k$.

Note that $\mathcal P\setminus\{\{k\}\}$ is partitioned into $2^{r-2}-1$ disjoint pairs $\{I,J\}$ where either $J=I\sqcup\{k\}$, $I=J\sqcup\{k\}$ or $I\sqcup J=[r]\setminus\{k\}$. Using a combination of the three operations described above, we can find $\left(|S|-(2^{r-1}-2)\right)2^{2^{r-2}-1}$ pairs $(x',\s')\in\F_2^t\times\mathcal T$ satisfying $e_i(x,\s)=e_i(x',\s')$ for $i\neq k$: we choose whether or not to swap each of the $2^{r-2}-1$ pairs $\{I,J\}$ and there are $|S|-(2^{r-1}-2)$ choices for the value of $s'_{\{k\}}$.

Now we prove that these are the only $(x',\s')$ with the desired property. Suppose $(x',\s')\in\F_2^t\times\mathcal T$ satisfies $e_i(x,\s)=e_i(x',\s')$ for $i\neq k$. As in the proof of Lemma \ref{thm:distinct-tuple}, this implies
\begin{equation}\label{eq:deg}
\{s_I:I\in\mathcal P\text{ with }|\{i,j\}\cap I|=1\}=\{s'_I:I\in\mathcal P\text{ with }|\{i,j\}\cap I|=1\}
\end{equation} 
for $1\leq i<j\leq r$ with $i\neq k$ and $j\neq k$.

For $I\in\mathcal P$, define $A_I\subset([r]\setminus\{k\})^2$ by
\[A_I=(I\setminus\{k\})\times(I^c\setminus\{k\})\cup(I^c\setminus\{k\})\times(I\setminus\{k\}).\] 
Note that $s_I$ appears on the left-hand side of (\ref{eq:deg}) if and only if $(i,j)\in A_I$. Note that $A_I\neq\emptyset$ for $I\neq\{k\}$. Now partition $\mathcal P\setminus\{\{k\}\}$ into equivalence classes under the relation $I\sim J$ if $A_I=A_J$. The same argument as in the proof of Lemma \ref{thm:distinct-tuple} then implies that for any equivalence class $\Pi\subset\mathcal P\setminus\{\{k\}\}$, we have
\[\{s_I\}_{I\in\Pi}=\{s'_I\}_{I\in\Pi}.\]
Now it is easy to see that these equivalence classes are exactly the pairs $\{I,J\}$ described earlier with $J=I\sqcup\{k\}$, $I=J\sqcup\{k\}$ or $I\sqcup J=[r]\setminus\{k\}$. Therefore, if $e_i(x,\s)=e_i(x',\s')$ for $i\neq k$, then $\s'$ must be one of the $(|S|-(2^{r-1}-2))2^{2^{r-2}-1}$ elements that we have already found. Finally, note that for fixed $i$ and $x,\s,\s'$, there is a unique $x'$ satisfying the equation $e_i(x,\s)=e_i(x',\s')$. Therefore, there are exactly the desired number of pairs $(x',\s')$.
\end{proof}

\subsection{Expansion properties}
\label{ssec:bubble}

Given $r\geq 3$, $t$ and $S\subset\F_2^t$, write $\G_{r,t,S}$ for the graph with vertex set $\F_2^t\times\mathcal T$ and with an edge between $(x,\s)$ and $(y,\tbf)$ if and only if the ordered $r$-tuples $e(x,\s)$ and $e(y,\tbf)$ differ in exactly one coordinate.

For a graph $G$, we use the notation $G^M$ to refer to the multigraph with the same vertex set as $G$ whose edges between $u$ and $v$ correspond to the $M$-step walks in $G$ from $u$ to $v$. We will prove that $\G_{r,t,S}$ is an expander under Assumptions \ref{asmp1}, \ref{asmp2}, \ref{asmp3} by first showing that there is some constant $M=M(r)$ such that $\G_{r,t,S}^M$ contains an expander as a dense subgraph. The specific expander we will find is a multigraph on the vertex set $\F_2^t\times\mathcal T$ with $c$ edges between $(x,\s)$ and $(y,\tbf)$ whenever $xy$ is an edge in $\Cay(\F_2^t,(S+S)\setminus\{0\})$. In the next section, we will show that this result implies that $\G_{r,t,S}$ is an expander.

We prove the claimed combinatorial properties of $\G_{r,t,S}$ in three stages. First we show how to change the value of $s_I$ for some specific $I$ without changing $x$ or too many of the other $s_J$'s. Then we iterate this to walk from $(x,\s)$ to $(x,\tbf)$ for arbitrary $\tbf$. Finally, we combine this walk with our second type of move to walk from $(x,\s)$ to $(y,\tbf)$ for $y$ an arbitrary neighbor of $x$ in the Cayley graph $\Cay(\F_2^t,(S+S)\setminus\{0\})$ and $\tbf$ arbitrary.

\begin{lemma}
\label{thm:single-bubble}
Given $r,t,S$ satisfying Assumption \ref{asmp1}, there is a constant $c=c(r)>0$ such that for each $I\in\mathcal P$, each $(x,\s)\in\F_2^t\times\mathcal T$ and each $a\in S$ distinct from all coordinates of $\s$, there are at least $c|S|^{|I|-1}$ walks of length $|I|$ in $\G_{r,t,S}$ which begin at $(x,\s)$ and end at an element of the form $(x,\s')$, where $s'_I=a$ and $s_J=s'_J$ for all $J\in\mathcal P$ with $J\nsubseteq I$.
\end{lemma}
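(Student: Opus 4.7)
The plan is to explicitly construct the required walks via the ``Boolean bubbling'' procedure sketched in Subsection~\ref{ssec:examp}. Fix an ordering $(i_1, \ldots, i_{|I|})$ of the elements of $I$, and for $0 \leq k \leq |I|$ set $K_k = \{i_1, \ldots, i_k\}$, so that $K_0 = \emptyset$ and $K_{|I|} = I$. When $|I| = r/2$ and $r$ is even, the definition of $\mathcal P$ forces $1 \in I$, so I take the ordering so that $i_{|I|} = 1$, guaranteeing each $K_k$ with $1 \leq k < |I|$ has size strictly less than $r/2$ and thus lies in $\mathcal P$; in all other cases $K_k \in \mathcal P$ automatically for $1 \leq k \leq |I|$. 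The walk I construct consists of an initial \emph{insertion} step followed by $|I|-1$ \emph{bubble} steps.

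The insertion step moves from $(x, \s)$ to $(x, \s^{(1)})$, where $\s^{(1)}$ differs from $\s$ only in that $s^{(1)}_{\{i_1\}} = a$. Since the altered index $\{i_1\}$ affects $e_j$ only when $j = i_1$, the tuples $e(x,\s)$ and $e(x,\s^{(1)})$ agree in all but the $i_1$-th coordinate; and since $a$ is distinct from every coordinate of $\s$, $\s^{(1)}$ lies in $\mathcal T$. For each $k = 2, \ldots, |I|$, starting from $(x, \s^{(k-1)})$, the $k$-th move produces $(x, \s^{(k)})$ by simultaneously swapping the values at the indices $K_{k-1}$ and $K_k$ and replacing the value at the index $\{i_k\}$ by an arbitrary element $b_k \in S$ chosen to preserve distinctness. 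The adjacency verification proceeds position by position: for $j = i_\ell$ with $\ell < k$, both $K_{k-1}$ and $K_k$ contain $i_\ell$ so the swap preserves their joint contribution, while $\{i_k\}$ does not contain $i_\ell$; for $j \notin K_k$, none of the three altered indices contain $j$; at $j = i_k$, however, $K_k$ and $\{i_k\}$ contain $i_k$ while $K_{k-1}$ does not, so this is the unique coordinate of $e$ that changes.

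To ensure $\s^{(k)} \in \mathcal T$, each $b_k$ must avoid the $|\mathcal P| - 1 = 2^{r-1} - 2$ other coordinates of $\s^{(k)}$, and Assumption~\ref{asmp1} gives at least $|S| - 2^{r-1} \geq |S|/2$ valid choices (the swap itself trivially preserves distinctness). Since every altered index over the entire walk lies in $\{\{i_1\}, \ldots, \{i_{|I|}\}\} \cup \{K_1, \ldots, K_{|I|}\}$, all of which are subsets of $I$, the terminal vertex satisfies $s^{(|I|)}_J = s_J$ for all $J \not\subseteq I$, and $s^{(|I|)}_I = a$ by construction of the last bubble step. Multiplying the independent choices over the $|I|-1$ bubble steps yields at least $(|S|/2)^{|I|-1}$ distinct walks, which proves the claim with $c = 2^{-(r-1)}$. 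The main technical obstacle is the adjacency verification for the bubble step, whose crux is the combinatorial observation that $K_{k-1}$ and $K_k$ differ in only the single element $i_k$, so these two indices act identically in every coordinate of $e$ except the $i_k$-th, rendering the swap invisible outside that one position.
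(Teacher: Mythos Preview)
Your proof is correct and follows essentially the same ``Boolean bubbling'' approach as the paper: define nested sets $K_k\subseteq I$, insert $a$ at the singleton level, then repeatedly swap the values at $K_{k-1}$ and $K_k$ while dropping a free parameter $b_k$ into the vacated singleton $\{i_k\}$; the adjacency check and the count of at least $c|S|^{|I|-1}$ walks match the paper's argument. One small remark: your special ordering in the case $|I|=r/2$ is unnecessary, since for every $1\le k<|I|$ one has $|K_k|=k<|I|\le r/2$, so $K_k\in\mathcal P$ automatically regardless of how the elements of $I$ are enumerated (and $K_{|I|}=I\in\mathcal P$ by hypothesis).
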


\begin{proof}
Write $I=\{k_1,\ldots,k_l\}$. Then define $I_i=\{k_1,\ldots,k_i\}$. To walk from $(x,\s)$ to a vertex of the form $(x,\s')$ in $\G_{r,t,S}$ we insert $a$ at $I_1$ and then bubble it up to $I_2,I_3,\ldots,I_l=I$. This finds one such path; to find many paths we insert arbitrary values $a_2,\ldots,a_l$ at the bottom of the lattice on all steps but the first.

Fix $a_2,\ldots,a_l$. Write $\s^0=\s$ and define $\s^1$ by
\begin{equation}
\label{eq:bubble-coordinates-1}
s^1_I=\begin{cases}a\qquad&\text{if }I=I_1=\{k_1\},\\s_I&\text{otherwise.}\end{cases}
\end{equation}
Inductively define $\s^2,\ldots,\s^l$ by
\begin{equation}
\label{eq:bubble-coordinates}
s^i_I=\begin{cases}s_{I_{i-1}}^{i-1}=a\qquad&\text{if }I=I_i,\\s_{I_i}^{i-1}&\text{if }I=I_{i-1},\\a_i&\text{if }I=\{k_i\},\\s^{i-1}_I&\text{otherwise.}\end{cases}
\end{equation}

We claim that $e(x,\s^{i-1})$ and $e(x,\s^i)$ agree everywhere except for the $k_i$-th coordinate. For $i=1$, the $r=4$ case is given in (\ref{eq:move-0}) and for $2\leq i\leq l$, the $r=4$ case is given in (\ref{eq:move-1'}). Consider
\[e_j(x,\s^1)=x+\sum_{I\in \mathcal P\atop{j\in I}}s^1_I.\]
For $j\neq k_1$, the fact that $j\in I$ implies that $I\neq\{k_1\}$, so $s^1_I=s^0_I$. Thus, $e(x,\s^0)$ and $e(x,\s^1)$ agree in all but the $k_1$-th coordinate. Similarly, consider
\[e_j(x,\s^i)=x+\sum_{I\in \mathcal P\atop{j\in I}}s^i_I.\]
For $j\neq k_1,\ldots,k_i$, the fact that $j\in I$ implies that $s^i_I=s^{i-1}_I$. Furthermore, for $j=k_1,\ldots,k_{i-1}$, the sum can be written as
\begin{align*}
e_j(x,\s^i)
&=x+s^i_{I_{i-1}}+s^i_{I_i}+\sum_{I\in \mathcal P\atop{j\in I\text{ and }I\neq I_{i-1},I_i}}s^i_I\\
&=x+s^{i-1}_{I_i}+s^{i-1}_{I_{i-1}}+\sum_{I\in \mathcal P\atop{j\in I\text{ and }I\neq I_{i-1},I_i}}s^{i-1}_I\\
&=e_j(x,\s^{i-1}).
\end{align*}
Thus, $e(x,\s^{i-1})$ and $e(x,\s^i)$ agree in all but the $k_i$-th coordinate.

Finally, note that as long as $a_2,\ldots,a_l$ are distinct from one another and from $a$ and all the coordinates of $\s$, then the tuples $\s^1,\ldots,\s^l$ all lie in $\mathcal T$. Therefore, we have found a walk of length $l=|I|$ from $e(x,\s)$ to $e(x,\s')$ for each valid sequence $a_2,\ldots,a_{|I|}$, of which there are $(|S|-2^{r-1})(|S|-(2^{r-1}+1))\cdots(|S|-(2^{r-1}+|I|-2))\geq c|S|^{|I|-1}$ by Assumption \ref{asmp1}.
\end{proof}

\begin{rem}
\label{thm:single-bubble-restricted}
Suppose we are in the setup of Lemma \ref{thm:single-bubble}, but we are also given a set $X$ not containing $a$ or any of the coordinates of $\s$. Suppose we wish to count walks of length $|I|$ from $(x,\s)$ to an element of the form $(x,\s')$ such that $s'_I=a$ and $s_J=s'_J$ for all $J\in\mathcal P$ with $J\nsubseteq I$ as before, but now with the additional restriction that no coordinate of $\s'$ can lie in $X$. The argument above still covers this case, giving one walk for each choice of $a_2,\ldots,a_{|I|}$ which are distinct from one another and from $a$, all coordinates of $\s$ and all elements of $X$. Thus, there are at least $(|S|-(2^{r-1}+|X|))(|S|-(2^{r-1}+|X|+1))\cdots(|S|-(2^{r-1}+|X|+|I|-2))$ walks. For $X$ small enough, say $|X|\leq 2^r$, Assumption \ref{asmp1} still guarantees at least $c|S|^{|I|-1}$ such walks. 
\end{rem}

\begin{lemma}
\label{thm:full-bubble}
For $r,t,S$ satisfying Assumption \ref{asmp1}, there exist constants $c = c(r)>0$ and $M'=M'(r)$ such that for $x\in\F_2^t$ and $\s,\tbf\in\mathcal T$, there are at least $c|S|^{M'-|\mathcal P|}$ walks of length $M'$ from $(x,\s)$ to $(x,\tbf)$ in $\G_{r,t,S}$.
\end{lemma}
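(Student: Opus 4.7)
Setting $M_1 := \sum_{I\in\mathcal P}|I|$ and $M' := 2M_1$, both constants depending only on $r$, the plan is to walk from $(x,\s)$ to $(x,\tbf)$ via an intermediate vertex $(x,\ubf)$ and to sum over sufficiently many choices of $\ubf$. For each valid $\ubf$, I will construct many walks of length $M_1$ from $(x,\s)$ to $(x,\ubf)$, and similarly from $(x,\ubf)$ to $(x,\tbf)$, by iterated applications of Lemma \ref{thm:single-bubble}, one position of $\mathcal P$ at a time.

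First, I would show that the set $U\subseteq\mathcal T$ of tuples $\ubf$ whose coordinates are disjoint from both the coordinates of $\s$ and those of $\tbf$ has size at least $c'|S|^{|\mathcal P|}$. This is a direct count using Assumption \ref{asmp1}: $|S|\geq 2^{2r}$, while the combined coordinates of $\s$ and $\tbf$ have size at most $2|\mathcal P|$.

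Next, for each fixed $\ubf\in U$, I would enumerate $\mathcal P$ as $I^{(1)},\ldots,I^{(|\mathcal P|)}$ in non-increasing order of size and, for $j=1,\ldots,|\mathcal P|$ in turn, apply Lemma \ref{thm:single-bubble} (together with the restriction from Remark \ref{thm:single-bubble-restricted} that all intermediate values avoid the coordinates of $\s,\ubf,\tbf$) to change the current coordinate at position $I^{(j)}$ to $u_{I^{(j)}}$. Three conditions need to be checked throughout: the target $u_{I^{(j)}}$ is distinct from every current coordinate (which follows from $\ubf\in\mathcal T$, from $\ubf$ being disjoint from $\s,\tbf$, and from the restriction on intermediates); previously set values $u_{I^{(j')}}$ for $j'<j$ are preserved (since the non-increasing size order forces $I^{(j')}\not\subseteq I^{(j)}$, and Lemma \ref{thm:single-bubble} preserves positions $J$ with $J\not\subseteq I^{(j)}$); and the intermediate values $a$ admit enough choices even after restriction (by Assumption \ref{asmp1}). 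Each bubble-up step contributes at least $c|S|^{|I^{(j)}|-1}$ walks of length $|I^{(j)}|$, and so concatenation yields at least $c^{|\mathcal P|}|S|^{M_1-|\mathcal P|}$ walks of length $M_1$ from $(x,\s)$ to $(x,\ubf)$. The same construction applied from $(x,\ubf)$ produces the same lower bound for walks of length $M_1$ from $(x,\ubf)$ to $(x,\tbf)$.

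Finally, concatenating and summing over $\ubf\in U$ gives the required bound: since each walk has a unique midpoint, different $\ubf$'s contribute disjoint sets of walks, and the total is at least $|U|\cdot\bigl(c^{|\mathcal P|}|S|^{M_1-|\mathcal P|}\bigr)^{2}\geq c|S|^{2M_1-|\mathcal P|}=c|S|^{M'-|\mathcal P|}$. The main obstacle, and the reason for routing through an intermediate $\ubf$ rather than walking directly, is the distinctness constraint: a direct bubble-up could fail whenever some $t_I$ coincides with some $s_{I'}$ that has not yet been overwritten, whereas choosing $\ubf$ disjoint from both $\s$ and $\tbf$ sidesteps this issue entirely.
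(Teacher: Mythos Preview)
Your proposal is correct and matches the paper's approach almost exactly: route through an intermediate $\ubf$ whose coordinates are disjoint from both $\s$ and $\tbf$, bubble up one position at a time in an order (non-increasing size, equivalently the paper's condition $I_i\not\subseteq I_j$ for $i<j$) guaranteeing earlier placements are preserved, concatenate, and sum over $\ubf$. One small correction: the restriction set you hand to Remark~\ref{thm:single-bubble-restricted} at step $j$ should be only the \emph{future} targets $\{u_{I^{(j'')}}:j''>j\}$ (as in the paper) rather than all coordinates of $\s,\ubf,\tbf$, since that remark requires $X$ to avoid both the current target $a=u_{I^{(j)}}$ and the current tuple's coordinates, and also asks for $|X|\le 2^r$; with this adjustment your argument goes through verbatim.
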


\begin{proof}
Fix any $\ubf \in \mathcal T$ whose set of coordinates are disjoint from those of $\s$ and $\tbf$. Fix an ordering $I_1,I_2,\ldots,I_{|\mathcal P|}$ of $\mathcal P$ so that $I_i\not\subseteq I_j$ for $i<j$. We use the notation \[u\overset{\ell}{\leadsto}v\] to refer to an $\ell$-step walk in $\G_{r,t,S}$ from $u$ to $v$.

Our goal is to walk from $(x,\s)$ to $(x,\ubf)$ by bubbling $u_{I_1}$ up to $I_1$, then bubbling $u_{I_2}$ up to $I_2$, and so on. Lemma \ref{thm:single-bubble} lets us perform each of these individual steps as long as $u_{I_i}$, the element we wish to bubble up, is distinct from all coordinates of the current vertex. Moreover, our choice of ordering means that each successive bubbling does not disturb the previous ones. Thus, it suffices to find many walks of the form
\[(x,\s)\overset{|I_1|}{\leadsto}(x,\ubf^1)\overset{|I_2|}{\leadsto}(x,\ubf^2)\leadsto\cdots\leadsto (x,\ubf),\] 
where $\ubf^i\in\mathcal T$ satisfies $u^i_{I_j}=u_{I_j}$ for all $1\leq j\leq i$ and no coordinate of $\ubf^i$ is equal to $u_{I_j}$ for $j>i$.

For each $1\leq i\leq|\mathcal P|$, note that if $u_{I_i}$ is distinct from all coordinates of $\ubf^{i-1}$, Lemma \ref{thm:single-bubble} guarantees that the number of walks of the form 
\[(x,\ubf^{i-1})\overset{|I_{i}|}{\leadsto}(x,\ubf^{i})\]
is $\Omega(|S|^{|I_{i}|-1})$.\footnote{We use the notation $\Omega(f)$ to denote a quantity of size at least $cf$ where $c=c(r)>0$ is a positive constant that only depends on the uniformity $r$.} Furthermore, Remark \ref{thm:single-bubble-restricted} implies that we can still find this many walks with no coordinate $\ubf^i$ equal to $u_{I_j}$ for $j>i$.

Multiplying these together, we see that there are  $\Omega(|S|^{M''-|\mathcal P|})$ walks of length $M''$ from $(x,\s)$ to $(x,\ubf)$, where $M''=\sum_{I\in\mathcal P}|I|$. Setting $M'=2M''$, we conclude that there are  $\Omega(|S|^{M'-2|\mathcal P|})$ walks of the form 
\[(x,\s)\overset{M''}\leadsto (x,\ubf)\overset{M''}\leadsto (x,\tbf)\] 
for each $\ubf$ whose coordinates are distinct from the coordinates of both $\s$ and $\tbf$. Since there are at least $\Omega(|S|^{|\mathcal P|})$ choices for $\ubf$, summing over them gives the desired result.
\end{proof}

\begin{lemma}
\label{thm:Cay-walks}
For $r,t,S$ satisfying Assumption \ref{asmp1}, there exist constants $c=c(r)>0$ and $M=M(r)$ such that, given $x$ adjacent to $y$ in $\Cay(\F_2^t,(S+S)\setminus\{0\})$ and $\s,\tbf\in\mathcal T$, there are at least $c|S|^{M-|\mathcal P|-2}$ walks of length $M$ from $(x,\s)$ to $(y,\tbf)$ in $\G_{r,t,S}$.
\end{lemma}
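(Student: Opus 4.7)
The plan is to generalize the three-stage argument outlined for $r=4$ in Section~\ref{ssec:examp}: Boolean-bubble from $(x,\s)$ to some intermediate $(x,\ubf)$ whose coordinates at two carefully chosen positions encode the Cayley edge, take one ``type $(\ref{eq:move-2'})$'' crossing step to $(y,\ubf')$, and then Boolean-bubble from $(y,\ubf')$ to $(y,\tbf)$. Lemma~\ref{thm:full-bubble} will supply the two bubbling pieces with good lower bounds on the number of walks, and the crossing step supplies the needed shift of the base point.

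First I would fix, depending only on $r$, a pair $I,J\in\mathcal P$ and an index $k\in[r]$ with $I\sqcup J=[r]\setminus\{k\}$. For $r$ odd take $k=r$, $I=\{1,\dots,(r-1)/2\}$, $J=\{(r+1)/2,\dots,r-1\}$; for $r$ even take $k=r$, $I=\{r/2+1,\dots,r-1\}$ (so $|I|=r/2-1$) and $J=\{1,\dots,r/2\}$ (so $|J|=r/2$ and $1\in J$). In either case both sets lie in $\mathcal P$. Writing $y=x+a_1+a_2$ for distinct $a_1,a_2\in S$, I would then generalize $(\ref{eq:move-2'})$ to arbitrary $r$: for any $\ubf\in\mathcal T$ with $u_I=a_1$ and $u_J=a_2$, and any $a\in S$ distinct from all coordinates of $\ubf$, define $\ubf'$ by swapping the values at $I$ and $J$ and setting $u'_{\{k\}}=a$. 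A short check shows that $(x,\ubf)$ and $(y,\ubf')$ are adjacent in $\G_{r,t,S}$: for every $i\neq k$, exactly one of $i\in I$, $i\in J$ holds since $I\sqcup J=[r]\setminus\{k\}$, so the swap changes $e_i$ by $a_1+a_2$, which is canceled by the change $x\mapsto x+a_1+a_2=y$; only the $k$-th coordinate is allowed to differ.

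To count walks, I would combine the three pieces. Lemma~\ref{thm:full-bubble} applied once gives $\Omega(|S|^{M'-|\mathcal P|})$ walks of length $M'$ from $(x,\s)$ to each fixed $(x,\ubf)$ with $u_I=a_1$ and $u_J=a_2$; there are $\Omega(|S|^{|\mathcal P|-2})$ admissible $\ubf$ since only the $|\mathcal P|-2$ free coordinates vary (subject to distinctness). The crossing step provides $\Omega(|S|)$ valid choices of $a$, each yielding a distinct $\ubf'$. Lemma~\ref{thm:full-bubble} applied a second time gives $\Omega(|S|^{M'-|\mathcal P|})$ walks of length $M'$ from each $(y,\ubf')$ to $(y,\tbf)$. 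Since different pairs $(\ubf,a)$ produce different intermediate vertices at positions $M'$ and $M'+1$ of the walk, there is no overcounting, and multiplication yields $\Omega(|S|^{2M'-|\mathcal P|-1})=\Omega(|S|^{M-|\mathcal P|-2})$ walks of length $M:=2M'+1$.

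The main obstacle I expect is not conceptual but rather the bookkeeping needed to verify the generalized crossing move for all $r$ simultaneously, and to confirm that the distinctness conditions on $\ubf$ and $\ubf'$ are compatible both with $\mathcal T$ and with the hypothesis of Lemma~\ref{thm:full-bubble} (which requires $\s,\tbf\in\mathcal T$ but is indifferent to specific coordinate values). Assumption~\ref{asmp1}, guaranteeing $|S|\geq 2^{2r}$, leaves ample room for the constantly-many distinctness constraints encountered at each stage, so these issues should be absorbed into the implicit constants $c(r)>0$.
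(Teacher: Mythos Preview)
Your proposal is correct and follows essentially the same argument as the paper: bubble to an intermediate $(x,\ubf)$ with two prescribed coordinates, take one crossing step to $(y,\ubf')$, then bubble to $(y,\tbf)$, with the count assembled exactly as you describe to give $\Omega(|S|^{M-|\mathcal P|-2})$ walks for $M=2M'+1$. The only difference is cosmetic---the paper uses $k=2$ with $I_1=\{3,\dots,\lceil r/2\rceil+1\}$ and $I_2=\{1,\lceil r/2\rceil+2,\dots,r\}$ instead of your $k=r$---but as you note, all that matters is $I,J\in\mathcal P$, disjoint, with $I\sqcup J=[r]\setminus\{k\}$, and your choices satisfy this.
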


\begin{proof}
Define
\[I_1=\left\{3,4,\ldots,\left\lceil\frac r2\right\rceil+1\right\}\]
and
\[I_2=\left\{1,\left\lceil\frac r2\right\rceil+2,\left\lceil\frac r2\right\rceil +3,\ldots,r\right\}.\] 
The relevant properties of these two sets are that $I_1,I_2\in\mathcal P$, they are disjoint and $I_1\cup I_2=[r]\setminus\{2\}$.

Since $x$ is adjacent to $y$ in $\Cay(\F_2^t,(S+S)\setminus\{0\})$, we can write $y=x+a_1+a_2$ for $a_1,a_2\in S$. Suppose now that $\ubf\in\mathcal T$ is such that $u_{I_1}=a_1$ and $u_{I_2}=a_2$ and pick $b\in S$ distinct from all coordinates of $\ubf$. Define $\ubf'$ by
\begin{equation}
\label{eq:cay-walk-coordinates}
u'_I=\begin{cases}u_{I_2}=a_2\qquad&\text{if }I=I_1,\\u_{I_1}=a_1&\text{if }I=I_2,\\b&\text{if }I=\{2\},\\u_I&\text{otherwise.}\end{cases}
\end{equation}

We claim that there are $\Omega(|S|^{2M'-2|\mathcal P|})$ walks of the form
\[(x,\s)\overset{M'}\leadsto (x,\ubf)\overset{1}\leadsto(y,\ubf')\overset{M'}\leadsto (y,\tbf).\]
By Lemma \ref{thm:full-bubble}, there are $\Omega(|S|^{M'-|\mathcal P|})$ walks of length $M'$ from $(x,\s)$ to $(x,\ubf)$ and also from $(y,\ubf')$ to $(y,\tbf)$. All that remains to be checked is that $(x,\ubf)$ and $(y,\ubf')$ are adjacent in $\G_{r,t,S}$. We refer the reader to (\ref{eq:move-2'}) for the $r=4$ example. Now
\[e_i(y,\ubf')=x+a_1+a_2+\sum_{I\in\mathcal P\atop{i\in I}}u'_I.\]
For $i\in I_1$, this can be written as
\begin{align*}
e_i(y,\ubf')
&=x+a_1+a_2+u'_{I_1}+\sum_{I\in\mathcal P\atop{i\in I\text{ and }I\neq I_1}}u'_I\\
&=x+a_1+\sum_{I\in\mathcal P\atop{i\in I\text{ and }I\neq I_1}}u_I\\
&=e_i(x,\ubf).
\end{align*}
A similar computation shows that $e_i(x,\ubf)=e_i(y,\ubf')$ for $i\in I_2$. Therefore, $e(x,\ubf)$ and $e(y,\ubf')$ agree in all but the second coordinate.

Finally, note that there are $\Omega(|S|^{|\mathcal P|-1})$ choices for the pair $(\ubf,\ubf')$ given the constraints that $u_{I_1}=a_1$, $u_{I_2}=a_2$ and $\ubf$ and $\ubf'$ are related by (\ref{eq:cay-walk-coordinates}). Setting $M=2M'+1$ and summing over all choices of $\ubf,\ubf'$ with the above properties, we see that there are $\Omega(|S|^{M-|\mathcal P|-2})$ walks of length $M$ from $(x,\s)$ to $(y,\tbf)$.
\end{proof}

\subsection{Spectral properties}
\label{ssec:spectral}

\begin{lemma}
\label{thm:expander}
For $r\geq 3$, $t$, $\epsilon>0$ and $S\subset\F_2^t$ satisfying Assumptions \ref{asmp1}, \ref{asmp2}, \ref{asmp3}, there exists a constant $c=c(r)>0$ such that $\G_{r,t,S}$ is a $c\epsilon$-expander.
\end{lemma}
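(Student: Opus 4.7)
The plan is to prove spectral expansion of $\G := \G_{r,t,S}$ by comparing a constant-length power $\G^M$ (with $M = M(r)$ supplied by Lemma~\ref{thm:Cay-walks}) against an auxiliary ``Cayley'' graph $G_{\Cay}$ on the same vertex set $\F_2^t \times \mathcal T$, whose edges join $(x,\s)$ to $(y,\tbf)$ whenever $x$ is adjacent to $y$ in $\Cay(\F_2^t,(S+S)\setminus\{0\})$, irrespective of the values of $\s,\tbf\in\mathcal T$. Write $A$ and $B$ for the adjacency matrices of $\G$ and $G_{\Cay}$, and set $d := \deg(\G)$ and $d_B := \deg(G_{\Cay})$.

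First I would verify that $G_{\Cay}$ is an $\Omega(\epsilon)$-expander. Its adjacency matrix is a Kronecker product $B = A_{\Cay(\F_2^t,(S+S)\setminus\{0\})}\otimes J_{\mathcal T}$ with the all-ones matrix on $\mathcal T$, so its non-trivial eigenvalues on $\mathbf{1}^\perp$ are $|\mathcal T|$ times the non-trivial eigenvalues of the underlying Cayley graph (padded with extra zeros). That Cayley graph is an $\Omega(\epsilon)$-expander by a direct Fourier computation: for any non-trivial character $\chi$, the eigenvalue equals $\tfrac12\bigl((\sum_{s\in S}\chi(s))^2 - |S|\bigr)$ (Assumption~\ref{asmp2} ensures $(S+S)\setminus\{0\}$ is a set of size $\binom{|S|}{2}$), and combining $|\sum_s \chi(s)| \leq (1-\epsilon)|S|$ from Assumption~\ref{asmp3} with $|S| \geq 2^{2r}$ from Assumption~\ref{asmp1} bounds the normalized eigenvalue by $1 - \Omega(\epsilon)$. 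Call the resulting expansion constant $\epsilon_0 = \Omega(\epsilon)$.

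Next, Lemma~\ref{thm:Cay-walks} gives $A^M \geq c'B$ entry-wise with $c' := c_0(r)|S|^{M-|\mathcal P|-2}$. Setting $R := A^M - c'B$, this is a non-negative symmetric matrix, and since $A$ is $d$-regular and $B$ is $d_B$-regular every row of $R$ sums to $\rho := d^M - c'd_B$. By Perron--Frobenius, $\|Rf\| \leq \rho\|f\|$ for every $f$. Thus for any $f \perp \mathbf{1}$ with $\|f\|=1$,
\[
\|A^M f\| \leq c'\|Bf\| + \|Rf\| \leq c'(1-\epsilon_0)d_B + \rho = d^M - c'\epsilon_0 d_B,
\]
which forces $\lambda(A)^M \leq d^M - c'\epsilon_0 d_B = d^M\bigl(1 - \epsilon_0\cdot c'd_B/d^M\bigr)$. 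From Lemma~\ref{thm:degree} we read off $d = \Theta_r(|S|)$, and directly $d_B = |(S+S)\setminus\{0\}|\cdot|\mathcal T| = \Theta_r(|S|^{|\mathcal P|+2})$, so $c'd_B/d^M = \Theta_r(1)$, a positive constant depending only on $r$. Taking $M$-th roots (with $M = M(r)$ constant) and using $(1-x)^{1/M} \leq 1 - x/M$ yields $\lambda(A) \leq d(1 - c(r)\epsilon)$, as required.

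The main obstacle I anticipate is the exponent bookkeeping in the last step: we must check that the powers of $|S|$ in $c' \cdot d_B$ and in $d^M$ match exactly, so that their ratio is a positive constant depending only on $r$ and not on $|S|$ or $t$. This balance is really the combinatorial heart of the argument, asserting that the Boolean-bubbling walks from Lemma~\ref{thm:Cay-walks} already account for a universally positive fraction of all length-$M$ walks between Cayley-neighboring points. Once the numerical balance is secured, the spectral transfer via $A^M = c'B + R$ is the standard Perron--Frobenius plus triangle-inequality argument sketched above.
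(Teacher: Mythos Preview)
Your proposal is correct and follows essentially the same route as the paper's proof: both compare $\G^M$ to the tensor-product graph $G_{\Cay}$, show the latter is an $\Omega(\epsilon)$-expander via the squared Cayley graph, use Lemma~\ref{thm:Cay-walks} to write $A^M = c'B + R$ with $R$ non-negative and regular, bound $\lambda(A^M)$ by $\lambda(c'B)+\rho$, and finally verify that the powers of $|S|$ in $c'd_B$ and $d^M$ match so the ratio is $\Theta_r(1)$. The only cosmetic differences are that the paper absorbs the scalar $c'$ into the multiplicity of $G_{\Cay}$ and computes the expansion of $\Cay(\F_2^t,(S+S)\setminus\{0\})$ via the identity $\mathbf A_{\Cay(\F_2^t,(S+S)\setminus\{0\})}=\tfrac12(\mathbf A_{\Cay(\F_2^t,S)}^2-|S|\mathbf I)$ rather than characters, but the substance is identical.
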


\begin{proof}
This follows from Lemma \ref{thm:Cay-walks} in three steps.

First we claim that Assumption \ref{asmp3} implies that $\Cay(\F_2^t,(S+S)\setminus\{0\})$ is an $\epsilon$-expander. Note that 
\[\mathbf A_{\Cay(\F_2^t,(S+S)\setminus\{0\})}=\frac12\left(\mathbf A_{\Cay(\F_2^t,S)}^2-|S|\mathbf I\right).\] 
To see this equation, note that $\Cay(\F_2^t,S)^2$ has an edge from $x$ to $x+s_1+s_2$ for each pair $s_1,s_2\in S$. This differs from $\Cay(\F_2^t,(S+S)\setminus\{0\})$ in that the pairs with $s_1=s_2$ contribute $|S|$ loops at each vertex and every other edge is double-counted since $x+s_1+s_2=x+s_2+s_1$.

Now, by Assumption \ref{asmp3} and the above equation, it follows that the non-trivial eigenvalues of $\Cay(\F_2^t,(S+S)\setminus\{0\})$ lie in $\left[-\frac12|S|,\frac12\left((1-\epsilon)^2|S|^2-|S|\right)\right]$. This interval is contained within $\left[-\frac12(1-\epsilon)(|S|^2-|S|),\frac12(1-\epsilon)(|S|^2-|S|)\right]$, proving that $\Cay(\F_2^t,(S+S)\setminus\{0\})$ is an $\epsilon$-expander.

Define the multigraph $G_{\Cay}$ to have vertex set $\F_2^t\times\mathcal T$ and $c|S|^{M-|\mathcal P|-2}$ edges between $(x,\s)$ and $(y,\tbf)$ whenever $xy$ is an edge in $\Cay(\F_2^t,(S+S)\setminus\{0\})$. Here $c, M$ are the same as in the statement of Lemma \ref{thm:Cay-walks}. Note that $G_{\Cay}$ is regular of degree $d_1\geq c'|S|^M$ for some $c'=c'(r)>0$. We claim that $G_{\Cay}$ is an $\epsilon$-expander. To see this, note that $G_{\Cay}$ is the tensor product of $\Cay(\F_2^t,(S+S)\setminus\{0\})$ with another graph, namely, the complete graph on $|\mathcal T|$ vertices with self-loops where every edge has multiplicity $c|S|^{M-|\mathcal P|-2}$. Then the fact that $\Cay(\F_2^t,(S+S)\setminus\{0\})$ is an $\epsilon$-expander implies the same is true of $G_{\Cay}$.

Second, note that Lemma \ref{thm:degree} implies that $\G_{r,t,S}^M$ is regular of degree $d_2\leq c''|S|^M$ for some $c''=c''(r)$. By Lemma \ref{thm:Cay-walks}, we know that $G_{\Cay}$ is a subgraph of $\G_{r,t,S}^M$. Thus, 
\[\mathbf A_{\G_{r,t,S}^M}=\mathbf A_{G_{\Cay}}+\mathbf A_{\G_{r,t,S}^M\setminus G_{\Cay}}.\] 
Here $\G_{r,t,S}^M\setminus G_{\Cay}$ is a $(d_2-d_1)$-regular graph, which implies that
\[\lambda(\G_{r,t,S}^M)\leq\lambda(G_{\Cay})+(d_2-d_1)\leq\left(1-\epsilon\right)d_1+(d_2-d_1)=\left(1-\frac{d_1}{d_2}\epsilon\right)d_2.\] 
Since $d_1\geq c'|S|^M$ and $d_2\leq c''|S|^M$, we conclude that $\G_{r,t,S}^M$ is a $\frac{c'}{c''}\epsilon$-expander.

Third, the equation
\[\mathbf A_{\G_{r,t,S}^M}=\mathbf A_{\G_{r,t,S}}^M\]
implies that 
\[\lambda(\G_{r,t,S})\leq \left(1-\frac{c'}{c''M}\epsilon\right)d,\]
where $d$ is the degree of $\G_{r,t,S}$.
Since $c',c'',M$ are all constants that depend only on $r$, this proves the desired result.
\end{proof}

Finally, we use this result to prove the main theorem.

\begin{mainthm}{\ref{thm:main}}
For $r\geq 3$, $t$, $\epsilon>0$ and $S\subset\F_2^t$ satisfying Assumptions \ref{asmp1}, \ref{asmp2}, \ref{asmp3}, there exists a constant $c=c(r)>0$ such that the walk graph $\Gw(\Hsf_{r,t,S})$ is a $c\epsilon$-expander.
\end{mainthm}

\begin{proof}
In this proof we consider three graphs: $\G_{r,t,S}$, $\Gw(\Hsf_{r,t,S})$ and a third graph that we will call $\Gw'(\Hsf_{r,t,S})$. The graph $\G_{r,t,S}$ was defined in the previous section; its vertices are pairs $(x,\s)\in\F_2^t\times\mathcal T$. The graph $\Gw(\Hsf_{r,t,S})$ is the walk graph whose vertices are $(r-1)$-edges of $\Hsf_{r,t,S}$ with an edge $ef$ if there is some common $r$-edge that contains both $e$ and $f$. We define the graph $\Gw'(\Hsf_{r,t,S})$ dually to have vertices the $r$-edges of $\Hsf_{r,t,S}$ with an edge $ef$ if there is some common $(r-1)$-edge contained in both $e,f$. By Lemma \ref{thm:degree}, we know that there is some $D$ such that every $(r-1)$-edge of $\Hsf_{r,t,S}$ is contained in exactly $D$ of the $r$-edges. Then $\Gw(\Hsf_{r,t,S})$ is $(r-1)D$-regular and $\Gw'(\Hsf_{r,t,S})$ is $r(D-1)$-regular.

Define a map $\pi\colon V(\G_{r,t,S})\to V(\Gw'(\Hsf_{r,t,S}))$ that sends $(x,\s)$ to the unordered underlying set of $e(x,\s)$. Then this map defines a graph homomorphism $\pi\colon \G_{r,t,S}\to \Gw'(\Hsf_{r,t,S})$. By Lemma \ref{thm:symmetric-edges}, these two graphs have the same degree and $\pi$ is $r!$-to-1 on both the vertex- and edge-sets. We claim that this implies that $\lambda(\Gw'(\Hsf_{r,t,S}))\leq\lambda(\G_{r,t,S})$. For any eigenvector $\vec v\in\R^{V(\Gw'(\Hsf_{r,t,S}))}$, we can lift $\vec v$ to a vector $\pi^{-1}\vec v\in\R^{V(\G_{r,t,S})}$ that is constant on the fibers of $\pi$. The above properties of $\pi$ easily imply that $\pi^{-1}\vec v$ is an eigenvector of $\G_{r,t,S}$ with the same eigenvalue as $\vec v$. Thus, by Lemma \ref{thm:expander}, we conclude that $\lambda(\Gw'(\Hsf_{r,t,S}))\leq (1-c\epsilon)r(D-1)$ for some $c=c(r)$ satisfying $0<c<1/2$. (If the value of $c$ which comes out of Lemma \ref{thm:expander} is not less than $1/2$, we can reduce it to $1/2$ without changing the validity of this eigenvalue bound.) 

Let $\mathbf B$ be the incidence matrix whose rows are indexed by the $(r-1)$-edges of $\Hsf_{r,t,S}$ and whose columns are indexed by the $r$-edges of $\Hsf_{r,t,S}$. Note that the adjacency matrices of $\Gw(\Hsf_{r,t,S})$ and $\Gw'(\Hsf_{r,t,S})$ satisfy 
\[\mathbf A_{\Gw'(\Hsf_{r,t,S})}+r\mathbf I=\mathbf B^T\mathbf B\] 
and
\[\mathbf A_{\Gw(\Hsf_{r,t,S})}+D\mathbf I=\mathbf B\mathbf B^T.\] 
Therefore, the non-trivial eigenvalues of $\mathbf B^T\mathbf B$ lie in the interval $[0,(1-c\epsilon)r(D-1)+r]$. (The lower bound follows since $\mathbf B^T\mathbf B$ is positive semi-definite.) Furthermore, $\mathbf B\mathbf B^T$ has the same eigenvalues as $\mathbf B^T\mathbf B$ with some 0's removed. Therefore, the non-trivial eigenvalues of $\Gw(\Hsf_{r,t,S})$ lie in the interval $[-D,(1-c\epsilon)r(D-1)+r-D]$. Since $r\geq 3$, $c<1/2$ and $D\geq r$, this interval is contained in $[-(1-c\epsilon)(r-1)D,(1-c\epsilon)(r-1)D]$.
\end{proof}

\section{Lower-order random walks}
\label{sec:lower}

Given an $r$-uniform hypergraph $H=(V,E)$, recall that a $k$-edge of $H$ is a $k$-element subset of $V$ that is contained in an $r$-edge of $H$. Two different $k$-edges are said to be adjacent if there is some $(k+1)$-edge of $H$ that they are both contained in. The $k$-th order random walk on $H$ is a random walk that starts at some $k$-edge of $H$ and moves to a random adjacent $k$-edge of $H$ repeatedly.

In the previous section, we showed that the $(r-1)$-st order random walk on $\Hsf_{r,t,S}$ mixes rapidly. In this section, we extend this to prove that the $k$-th order random walk mixes rapidly for all $1\leq k\leq r-1$. We do this by proving that the $k$-th order random walk on $\Hsf_{r,t,S}$ behaves almost exactly the same as the $k$-th order random walk on $\Hsf_{k+1,t,2^{r-k-1}S'}$ where $2^{r-k-1}S'$ is a modification of the $2^{r-k-1}$-fold sumset of $S$ defined below.

In this section, we use $\mathcal P_r$ to refer to $\mathcal P$ of (\ref{eq:P}) and $\mathcal T_r(S)$ to refer to $\mathcal T $ of (\ref{eq:T}) when confusion may arise. Define $lS'=\{s_1+\cdots+s_l:s_1,\ldots,s_l\in S\text{ distinct}\}$. For a hypergraph $H$, we write $\Gw^{(k)}(H)$ for the graph whose vertices are the $k$-edges of $H$ with an edge between each pair of adjacent $k$-edges. 

Let $\G^{(k)}_{r,t,S}$ be the graph whose vertices are the ordered $k$-tuples $(e_i(x,\s))_{i=1}^k$ for $(x,\s)\in\F_2^t\times \mathcal T$. Two vertices of $\G^{(k)}_{r,t,S}$ are adjacent if they agree in all but one coordinate. Note that previously we defined the graph $\G_{r,t,S}$ to have vertex set consisting of pairs $(x,\s)$. This definition agrees with $\G^{(r)}_{r,t,S}$ since for $(x,\s)\neq(y,\tbf)$ the $r$-tuples $e(x,\s)$ and $e(y,\tbf)$ are distinct. However, for $k<r$, there may be many pairs $(x,\s)$ and $(y,\tbf)$ such that $(e_i(x,\s))_{i=1}^k=(e_i(y,\tbf))_{i=1}^k$.

Our proof proceeds in three stages. First we show that $\G^{(r-k)}_{r,t,S}$ is an induced subgraph of $\G_{r-k,t,2^kS'}$. Second, we use this fact to prove that $\G^{(r-k)}_{r,t,S}$ is an expander in the same manner as in Section \ref{sec:proof}. Third, we deduce from this that $\Gw^{(r-k-1)}(\Hsf_{r,t,S})$ is an expander in the same manner as in the proof of Theorem \ref{thm:main}.

Define $\mathcal T'\subseteq\mathcal T_{r-k}(2^kS')$ by
\begin{equation}
    \label{eq:T'}
    \mathcal T'=\left\{\left(\sum_{a=1}^{2^k}s_{a,I}\right)_{I\in\mathcal P_{r-k}}:\text{all the }s_{a,I}\in S\text{ are distinct}\right\}.
\end{equation}
Note that this definition differs from that of $\mathcal T_{r-k}(2^kS')$ in that the latter set is defined by the two conditions that for each $I$ the elements $\{s_{a,I}\}_{1\leq a\leq 2^k}$ are distinct and the elements $\left\{\sum_a s_{a,I}\right\}_{I\in\mathcal P_{r-k}}$ are distinct. Since $S$ satisfies Assumption \ref{asmp2} the condition that defines $\mathcal T_{r-k}(2^kS')$ is a weaker condition than the one that defines $\mathcal T'$.

Now define $U\subseteq V(\G_{r-k,t,2^kS'})$ by 
\begin{equation}
\label{eq:U}
U=\F_2^t\times \mathcal T'.
\end{equation}
We write $\G_{r-k,t,2^kS'}[U]$ for the induced subgraph on vertex set $U$.

Let us briefly discuss the properties of $\G_{r-k,t,2^kS'}$. Assume that $r,t,S,\epsilon$ satisfy Assumptions \ref{asmp1}, \ref{asmp2}, \ref{asmp3}. Obviously, $r-k,t,2^kS'$ still satisfy Assumption \ref{asmp1}. Later on, we will show that $t,2^kS',c\epsilon$ still satisfy Assumption \ref{asmp3} for some constant $c$. However, $r-k,t,2^kS'$ fail Assumption \ref{asmp2}. To see this, suppose $k=1$ and $a,b,c,d\in S$. Then $a+b,a+c,b+d,c+d\in 2^kS'$, but $(a+b)+(c+d)=(a+c)+(b+d)$. This means that Lemmas \ref{thm:distinct-tuple} and \ref{thm:degree} may not apply to $\G_{r-k,t,2^kS'}$. However, we will see that these lemmas still apply to $\G_{r-k,t,2^kS'}[U]$.

\begin{lemma}
\label{thm:distinct-tuple-lower}
Recall $\mathcal T'$ from (\ref{eq:T'}). For $r,t,S$ satisfying Assumption \ref{asmp2} and $0\leq k\leq r-2$, the ordered $(r-k)$-tuples $e(x,\s)$ for $x\in \F_2^t$ and $\s\in\mathcal T'$ are all distinct.
\end{lemma}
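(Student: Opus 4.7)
The plan is to mimic the proof of Lemma~\ref{thm:distinct-tuple} with one additional ``unfolding'' step to reduce the problem to a statement about sums of elements of $S$. Suppose $e(x,\s)=e(x',\s')$ for some $x,x'\in\F_2^t$ and $\s,\s'\in\mathcal T'$, and fix representations $s_I=\sum_{a=1}^{2^k}s_{a,I}$ and $s'_I=\sum_{a=1}^{2^k}s'_{a,I}$ with all $s_{a,I}$ distinct in $S$ and all $s'_{a,I}$ distinct in $S$, as guaranteed by the definition of $\mathcal T'$. Exactly as in Lemma~\ref{thm:distinct-tuple}, forming $e_i(x,\s)+e_j(x,\s)=e_i(x',\s')+e_j(x',\s')$ for each $1\le i<j\le r-k$ yields
\[
\sum_{\substack{I\in\mathcal P_{r-k}\\|\{i,j\}\cap I|=1}}s_I \;=\; \sum_{\substack{I\in\mathcal P_{r-k}\\|\{i,j\}\cap I|=1}}s'_I,
\]
and after expanding the $s_I$ and $s'_I$ this becomes an equality between two sums of distinct elements of $S$.

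I would then invoke Assumption~\ref{asmp2} to conclude that for each pair $(i,j)$ the underlying sets of summands on the two sides coincide. Each side involves at most $|\mathcal P_{r-k}|\cdot 2^k=(2^{r-k-1}-1)\cdot 2^k<2^{r-1}$ distinct elements of $S$, so after cancelling common elements and moving everything to one side I obtain a sum of at most $2^r$ distinct elements of $S$ equal to $0$. Assumption~\ref{asmp2} then forces this sum to be empty, so the two sets of summands are indeed equal for every $(i,j)$.

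To extract per-$I$ equality I reuse the footprint argument from Lemma~\ref{thm:distinct-tuple}: an element $c=s_{a,I}$ appears on the left-hand side for exactly those pairs $(i,j)\in(I\times I^c)\cup(I^c\times I)$, and these footprints are distinct and nonempty across $I\in\mathcal P_{r-k}$. Since the matching footprint on the right-hand side is realized only by elements of the form $s'_{a',I}$ with the \emph{same} $I$, matching footprints gives $\{s_{a,I}\}_a=\{s'_{a,I}\}_a$ for every $I\in\mathcal P_{r-k}$, and summing yields $s_I=s'_I$ for every $I$, so $\s=\s'$. Finally, $x=x'$ follows from the identity $x=e_1(x,\s)+\sum_{I\in\mathcal P_{r-k},\,1\in I}s_I$, exactly as at the end of Lemma~\ref{thm:distinct-tuple}.

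The only real obstacle is the bookkeeping check that the total number of distinct summands after unfolding is at most $2^r$, so that Assumption~\ref{asmp2} actually applies; this amounts to the inequality $2(2^{r-1}-2^k)\le 2^r$ and is where the constraint $k\le r-2$ is used implicitly (via $|\mathcal P_{r-k}|=2^{r-k-1}-1$). Once this numerical check is granted, the proof goes through essentially verbatim from that of Lemma~\ref{thm:distinct-tuple}.
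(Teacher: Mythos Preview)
Your proposal is correct and follows essentially the same route as the paper's own proof: unfold each $s_I$ and $s'_I$ into their $2^k$ constituent elements of $S$, apply Assumption~\ref{asmp2} to the resulting equalities of sums of distinct elements of $S$, and then use the footprint argument from Lemma~\ref{thm:distinct-tuple} to recover $\{s_{a,I}\}_a=\{s'_{a,I}\}_a$ for each $I$. Your explicit check that the number of summands stays below $2^r$ is a welcome bit of bookkeeping that the paper leaves implicit; note, though, that the inequality $2(2^{r-1}-2^k)\le 2^r$ holds for all $k\ge 0$, so the constraint $k\le r-2$ is not actually what makes this bound work---it is needed only so that $r-k\ge 2$ and $\mathcal P_{r-k}$ is nonempty.
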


\begin{proof}
This follows from essentially the same proof as Lemma \ref{thm:distinct-tuple}.

Suppose $e(x,\s)=e(x',\s')$ for $x,x'\in\F_2^t$ and $\s,\s'\in\mathcal T'$, where $s_I=\sum_{a=1}^{2^k}s_{a,I}$ and $s'_I=\sum_{a=1}^{2^k}s'_{a,I}$ with $s_{a,I},s'_{a,I}\in S$. Then we know that \[e_i(x,\s)+e_j(x,\s)=e_i(x',\s')+e_j(x',\s')\] for $1\leq i<j\leq r-k$. This can be rewritten as
\[\sum_{I\in\mathcal P_{r-k}\atop{|\{i,j\}\cap I|=1}}\sum_{a=1}^{2^k}s_{a,I}=\sum_{I\in\mathcal P_{r-k}\atop{|\{i,j\}\cap I|=1}}\sum_{a=1}^{2^k}s'_{a,I}.\]
Since the $s_{a,I}$'s are all distinct, the $s'_{a,I}$'s are all distinct and $S$ satisfies Assumption \ref{asmp2}, the same argument as in Lemma \ref{thm:distinct-tuple} implies that $\{s_{a,I}\}_a=\{s'_{a,I}\}_a$ for each $I\in\mathcal P_{r-k}$. Thus $\s=\s'$. Finally, since $e_1(x,\s)=e_1(x',\s')$, this implies that $x=x'$, as desired.
\end{proof}

\begin{lemma}
\label{thm:isom}
Recall $U$ from (\ref{eq:U}). For $r,t,S$ satisfying Assumption \ref{asmp2} and $0\leq k\leq r-2$, the graph $\G^{(r-k)}_{r,t,S}$ is isomorphic to $\G_{r-k,t,2^kS'}[U]$.
\end{lemma}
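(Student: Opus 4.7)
The plan is to exhibit an explicit bijection $\Phi\colon V(\G^{(r-k)}_{r,t,S})\to U$ under which adjacency is trivially preserved. The vertex set of $\G^{(r-k)}_{r,t,S}$ already lives in $(\F_2^t)^{r-k}$ by definition, and Lemma \ref{thm:distinct-tuple-lower} embeds $U$ injectively into $(\F_2^t)^{r-k}$ via $(x',\tbf)\mapsto \tilde e(x',\tbf)$, where $\tilde e$ denotes the edge map of Section \ref{ssec:const} applied at uniformity $r-k$ with generating set $2^kS'$. Since in both graphs the adjacency rule is ``the two tuples agree in all but one coordinate,'' it suffices to show that the two subsets of $(\F_2^t)^{r-k}$ coincide.

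The central calculation is a regrouping of the sum defining $e_i(x,\s)$ according to $L=I\cap[r-k]$. Writing $\tau_L=\sum_{I\in\mathcal P_r,\,I\cap[r-k]=L}s_I$, the fact that $i\in I$ depends only on $L$ when $i\in[r-k]$ gives $e_i(x,\s)=x+\sum_{L\ni i}\tau_L$. Splitting those $L$ that are neither $\emptyset$ nor $[r-k]$ into complementary pairs $\{L,L^c\}$ (exactly one of which lies in $\mathcal P_{r-k}$) and working over $\F_2$, this rearranges to
\[e_i(x,\s) = x' + \sum_{J\in\mathcal P_{r-k},\,i\in J}(\tau_J+\tau_{J^c}),\qquad x' := x+\tau_{[r-k]}+\sum_{J\in\mathcal P_{r-k}}\tau_{J^c},\]
which is precisely $\tilde e_i(x',\tbf)$ with $t_J := \tau_J+\tau_{J^c}$.

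To confirm $(x',\tbf)\in U$ I would verify $\tbf\in\mathcal T'$: for $J\in\mathcal P_{r-k}$, both $J$ and $J^c$ are nonempty proper subsets of $[r-k]$, so for every $K\subseteq\{r-k+1,\ldots,r\}$ exactly one of $J\cup K$ and $J^c\cup K^c$ lies in $\mathcal P_r$; hence each $t_J$ is a sum of exactly $2^k$ distinct elements of $S$, and for distinct $J_1,J_2\in\mathcal P_{r-k}$ the pairs $\{J_1,J_1^c\}$ and $\{J_2,J_2^c\}$ are disjoint (since $\mathcal P_{r-k}$ contains at most one of each complementary pair), so the full collection of summands is disjoint. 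Surjectivity of $\Phi$ then follows by reverse-engineering: given $(x',\tbf)\in U$, distribute the $2^k$ summands of each $t_J$ arbitrarily among the $2^k$ slots $\{I\in\mathcal P_r : I\cap[r-k]\in\{J,J^c\}\}$, fill the remaining $2^k-1$ slots (those with $I\cap[r-k]\in\{\emptyset,[r-k]\}$, $I\neq\emptyset$) with fresh distinct elements of $S$ using Assumption \ref{asmp1}, and solve for $x$ from the definition of $x'$.

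The main obstacle, and the step that deserves care, is the combinatorial bookkeeping around the pairing $I\leftrightarrow I^c$ within $\mathcal P_r$: one must verify both that the groups $\{I\in\mathcal P_r : I\cap[r-k]\in\{J,J^c\}\}$ for $J\in\mathcal P_{r-k}$ each have size exactly $2^k$, and that together with the $2^k-1$ remaining slots they partition $\mathcal P_r$. All of this hinges on the defining property of $\mathcal P_r$ from (\ref{eq:P}) that $\{\emptyset\}\cup\mathcal P_r$ contains exactly one of $I$ and $I^c$ for each $I\subseteq[r]$. Once this bookkeeping is in place, adjacency preservation follows immediately from the identity $\tilde e(x',\tbf)=(e_i(x,\s))_{i=1}^{r-k}$ established above.
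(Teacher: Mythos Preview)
Your proof is correct and follows essentially the same route as the paper: both arguments group the indices $I\in\mathcal P_r$ according to $I\cap[r-k]$, pair complementary restrictions $\{J,J^c\}$ inside $[r-k]$, and verify the identity $e_i(x,\s)=\tilde e_i(x',\tbf)$ for $i\le r-k$, after which Lemma~\ref{thm:distinct-tuple-lower} and the shared ``agree in all but one coordinate'' adjacency rule finish the job. Your $t_J=\tau_J+\tau_{J^c}$ and $x'$ coincide with the paper's $t_I$ and $y$.

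One point worth flagging: you invoke Assumption~\ref{asmp1} to produce the $2^k-1$ fresh generators needed for surjectivity onto $U$, whereas the lemma as stated only hypothesizes Assumption~\ref{asmp2}. You are right to be cautious here---surjectivity genuinely requires $|S|\ge|\mathcal P_r|$, which Assumption~\ref{asmp2} alone does not guarantee (and the paper's own proof glosses over this step). In practice this is harmless since every application in the paper is made under Assumption~\ref{asmp1}, but your observation is the more careful one.
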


\begin{proof}
We will define maps $\psi\colon V(\G_{r,t,S})\to V(\G^{(r-k)}_{r,t,S})$ and $\phi\colon V(\G_{r,t,S})\to U\subseteq V(\G_{r-k,t,2^kS'})$ and show that $\phi\circ\psi^{-1}$ is a well-defined isomorphism between $\G^{(r-k)}_{r,t,S}$ and $\G_{r-k,t,2^kS'}[U]$.

Define $\psi\colon V(\G_{r,t,S})\to V(\G^{(r-k)}_{r,t,S})$ by $\psi((x,\s))=(e_i(x,\s))_{i=1}^{r-k}$.

Next start by defining a map $\phi\colon\mathcal P_r\to\mathcal P_{r-k}\cup\{\emptyset\}$. For $J\in\mathcal P_r$, note that exactly one of $J\cap[r-k]$ and $[r-k]\setminus J$ lies in $\mathcal P_{r-k}\cup\{\emptyset\}$. Let $\phi(J)$ be the one of these two sets that lies in $\mathcal P_{r-k}\cup\{\emptyset\}$.
Note that for any $I \in \mathcal P_{r-k}\cup\{\emptyset\}$, there are exactly $2^k$ sets $J \in \mathcal P_{r}$ such that $\phi(J) = I$.
Then $\phi\colon V(\G_{r,t,S})\to V(\G_{r-k,t,2^kS'})$ is given by $\phi((x,\s))=(y,\tbf)$ where
\[t_I=\sum_{J\in\mathcal P_r\atop{\phi(J)=I}}s_J\]
and
\[y=x+\sum_{J\in\mathcal P_r\atop{J\cap[r-k]\not\in\mathcal P_{r-k}\cup\{\emptyset\}}}s_J.\] 
This map has two important properties. First, $\phi((x,\s))\in U\subseteq V(\G_{r-k,t,2^kS'})$ for any $(x,\s)\in V(\G_{r,t,S})$. Second, if $\phi((x,\s))=(y,\tbf)$ then $e_i(x,\s)=e_i(y,\tbf)$ for $1\leq i\leq k-r$. To see this property, write $\mathcal P_r=\mathcal P_r^{(1)}\sqcup\mathcal P_r^{(2)}$, where $\mathcal P_r^{(1)}$ consists of those $J$ such that $J\cap[r-k]\in\mathcal P_{r-k}\cup\{\emptyset\}$ and $\mathcal P_r^{(2)}$ consists of the other $J$. Then note that
\begin{align*}
e_i(y,\tbf)
&=x+\sum_{J\in\mathcal P_r^{(2)}}s_J
+\sum_{I\in\mathcal P_{r-k}\atop{i\in I}}\sum_{J\in\mathcal P_r\atop{\phi(J)=I}}s_J\\
&=x+\sum_{J\in\mathcal P_r^{(2)}}s_J
+\sum_{J\in\mathcal P_r^{(1)}\atop{i\in J}}s_J
+\sum_{J\in\mathcal P_r^{(2)}\atop{i\not\in J}}s_J\\
&=e_i(x,\s).
\end{align*}
By Lemma \ref{thm:distinct-tuple-lower}, this implies that $\phi((x,\s))=\phi((x',\s'))$ if and only if $e_i(x,\s)=e_i(x',\s')$ for all $1\leq i\leq r-k$. This is the same condition as the one for $\psi((x,\s))=\psi((x,\s'))$. Therefore, we see that $\phi\circ\psi^{-1}\colon V(\G^{(r-k)}_{r,t,S})\to U$ is a well-defined map that sends $(v_1,\ldots,v_{r-k})$ to $(y,\tbf)\in U$ such that $(e_i(y,\tbf))_{i=1}^{r-k}=(v_1,\ldots,v_{r-k})$. Therefore, $\phi\circ\psi^{-1}$ is a graph isomorphism, as desired.
\end{proof}

\begin{lemma}
\label{thm:degree-lower}
Recall $U$ from (\ref{eq:U}). For $r,t,S$ satisfying Assumption \ref{asmp2} and $0\leq k\leq r-2$, there exist constants $c=c(r)$ and $D=D(r,k,|S|)$ such that the induced subgraph $\G_{r-k,t,2^kS'}[U]$ is $D$-regular with 
\[D\leq c|2^kS'|\leq c|S|^{2^k}.\]
\end{lemma}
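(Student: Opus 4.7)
The plan is to compute the degree of an arbitrary vertex of $\G_{r-k,t,2^kS'}[U]$ directly, by adapting the pairwise-difference argument of Lemma \ref{thm:degree} to the present setting, where each generator $t_I$ is itself a sum of $2^k$ distinct elements of $S$. This will simultaneously establish $D$-regularity and the quantitative bound.

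Fix $(y,\tbf)\in U$ and write $t_I=\sum_{a=1}^{2^k}s_{a,I}$ with all $s_{a,I}\in S$ distinct (the decomposition being unique by Assumption \ref{asmp2}). Fix $j\in[r-k]$, and let $N_j$ count the $(y',\tbf')\in U$ for which $e_i(y',\tbf')=e_i(y,\tbf)$ for every $i\in[r-k]\setminus\{j\}$. Subtracting pairs of these equations for $i_1,i_2\in[r-k]\setminus\{j\}$, expanding each $t_I$ and $t'_I$ as sums, and invoking Assumption \ref{asmp2} (valid since each side involves at most $2^k|\mathcal P_{r-k}|<2^{r-1}$ distinct elements of $S$) yields
\[\{s_{a,I}:I\in\Pi\}_{a,I}=\{s'_{a,I}:I\in\Pi\}_{a,I}\]
for each equivalence class $\Pi\subseteq\mathcal P_{r-k}$ in the partition defined via the $A_I$ relation as in the proof of Lemma \ref{thm:degree}. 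Exactly as there, $\mathcal P_{r-k}$ splits into $2^{r-k-2}-1$ classes of size $2$ and one singleton class $\{I_*\}$, where $I_*\in\{\{j\},[r-k]\setminus\{j\}\}$ is the one member of $\mathcal P_{r-k}$.

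For each size-$2$ class $\{I_1,I_2\}$, the fixed $2^{k+1}$-element set $\{s_{a,I_1}\}_a\cup\{s_{a,I_2}\}_a$ must be partitioned into the ordered pair of $2^k$-subsets $(\{s'_{a,I_1}\}_a,\{s'_{a,I_2}\}_a)$, which admits $\binom{2^{k+1}}{2^k}$ choices; by Assumption \ref{asmp2}, each yields a distinct ordered pair $(t'_{I_1},t'_{I_2})$. For the singleton $\{I_*\}$, $t'_{I_*}$ is unconstrained by the derived equations; membership in $\mathcal T'$ only requires $\{s'_{a,I_*}\}_a$ to be a $2^k$-subset of $S$ disjoint from the fixed $2^k(2^{r-k-1}-2)$-element set $\bigcup_{I\neq I_*}\{s_{a,I}\}_a$, giving $\binom{|S|-2^k(2^{r-k-1}-2)}{2^k}$ options, each producing a distinct $t'_{I_*}\in 2^kS'$. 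Since the size-$2$ constraints preserve the union $\bigcup_{I\neq I_*}\{s_{a,I}\}_a$ setwise, every resulting $\tbf'$ lies in $\mathcal T'$. Finally, $y'$ is determined uniquely by a single equation $e_i(y',\tbf')=e_i(y,\tbf)$ for any $i\in[r-k]\setminus\{j\}$, with consistency of the remaining equations encoded in the set equalities above.

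Combining these counts,
\[N_j=\binom{2^{k+1}}{2^k}^{2^{r-k-2}-1}\binom{|S|-2^k(2^{r-k-1}-2)}{2^k}-1\]
(the $-1$ removing the trivial choice $(y',\tbf')=(y,\tbf)$), which depends only on $r$, $k$, and $|S|$. Summing over $j$ gives $D=(r-k)N_j$, proving regularity. The stated bound follows from $\binom{|S|-2^k(2^{r-k-1}-2)}{2^k}\leq\binom{|S|}{2^k}=|2^kS'|$ (the latter equality again by Assumption \ref{asmp2}) and $|2^kS'|\leq|S|^{2^k}$, with $c=(r-k)\binom{2^{k+1}}{2^k}^{2^{r-k-2}-1}$ a function of $r$ alone. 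The main point requiring care beyond transcribing Lemma \ref{thm:degree} is the increased freedom within each size-$2$ class: because the symbols $t_I$ are now $2^k$-fold sums, one obtains $\binom{2^{k+1}}{2^k}$ partitions rather than the $2$ swaps available in the $k=0$ case.
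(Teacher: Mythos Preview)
Your proof is correct and follows essentially the same approach as the paper: both adapt the pairwise-difference argument of Lemma~\ref{thm:degree}, partitioning $\mathcal P_{r-k}$ into the singleton $\{I_*\}$ and $2^{r-k-2}-1$ pairs, and using Assumption~\ref{asmp2} at the level of the underlying elements $s_{a,I}\in S$ to obtain the count $\binom{|S|-2^k(2^{r-k-1}-2)}{2^k}\binom{2^{k+1}}{2^k}^{2^{r-k-2}-1}$ (which coincides with the paper's $\binom{|S|-(2^{r-1}-2^{k+1})}{2^k}\binom{2^{k+1}}{2^k}^{2^{r-k-2}-1}$). The only cosmetic differences are that you make the final step to the degree explicit (subtracting the trivial solution and summing over $j$) and that your constant $c=(r-k)\binom{2^{k+1}}{2^k}^{2^{r-k-2}-1}$ depends on $k$ as written; to match the statement literally, take the maximum over $0\leq k\leq r-2$.
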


\begin{proof}
This follows from a similar argument to Lemma \ref{thm:degree}.

Fix $(x,\s)\in\F_2^t\times\mathcal T'$ and $1\leq l\leq r-k$. We wish to count pairs $(x',\s')\in\F_2^t\times\mathcal T'$ such that $e_i(x,\s)=e_i(x',\s')$ for all $i\neq l$. Say $\s,\s'$ are defined by $s_I=\sum_{a=1}^{2^k}s_{a,I}$ and $s'_I=\sum_{a=1}^{2^k}s'_{a,I}$. We know that\[\sum_{I\in\mathcal P_{r-k}\atop{|\{i,j\}\cap I|=1}}\sum_{a=1}^{2^k}s_{a,I}=\sum_{I\in\mathcal P_{r-k}\atop{|\{i,j\}\cap I|=1}}\sum_{a=1}^{2^k}s'_{a,I}\] for $1\leq i<j\leq r-k$ with $i\neq l$ and $j\neq l$.

As in the proof of Lemma \ref{thm:degree}, we partition $\mathcal P_{r-k}\setminus\{\{l\}\}$ into pairs $\{I,J\}$ where either $J=I\sqcup\{l\}$, $I=J\sqcup\{l\}$ or $I\sqcup J=[r-k]\setminus\{l\}$. Using essentially the same argument as in the proof of Lemma \ref{thm:degree}, we conclude that for each such pair $\{I,J\}$,
\[\{s_{a,I}\}_a\cup\{s_{a,J}\}_a=\{s'_{a,I}\}_a\cup\{s'_{a,J}\}_a.\]

Therefore, for each $(x,\s)\in\F_2^t\times\mathcal T'$ and $1\leq l\leq r-k$ the number of pairs $(x',\s')\in\F_2^t\times\mathcal T'$ satisfying $e_i(x,\s)=e_i(x',\s')$ for all $i\neq l$ is exactly equal to\[\binom{|S|-(2^{r-1}-2^{k+1})}{2^k}\binom{2^{k+1}}{2^k}^{2^{r-k-2}-1}.\] The first term is the number of choices for $s'_{\{l\}}=\sum_{a=1}^{2^k}s'_{a,\{l\}}$ where the $s'_{a,\{l\}}$ are distinct from the $s_{a,I}$ for $I\neq \{l\}$. The second term is the number of ways to choose $s'_I=\sum_{a=1}^{2^k}s'_{a,I}$ and $s'_J=\sum_{a=1}^{2^k}s'_{a,J}$ given the set $\{s'_{a,I}\}_a\cup\{s'_{a,J}\}_a$ for each of the $2^{r-k-2}-1$ pairs $\{I,J\}$ that $\mathcal P_{r-k}\setminus\{\{l\}\}$ is partitioned into.
\end{proof}

\begin{lemma}
\label{thm:asmp3-lower}
For $r,t,\epsilon,S$ satisfying Assumptions \ref{asmp1}, \ref{asmp3} and $0\leq k\leq r-2$, there exists a constant $c>0$ such that $\Cay(\F_2^t,2^{k+1}S')$ is a $c\epsilon$-expander.
\end{lemma}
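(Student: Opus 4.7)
My approach is Fourier-analytic. Since $\F_2^t$ is abelian, the eigenvalues of $\Cay(\F_2^t, T)$ for any $T \subseteq \F_2^t$ are the Fourier coefficients $\widehat{T}(\xi) := \sum_{t \in T}(-1)^{\xi \cdot t}$ indexed by $\xi \in \F_2^t$, and the graph is an $\eta$-expander exactly when $|\widehat{T}(\xi)| \leq (1 - \eta)|T|$ for every nontrivial $\xi$.

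First I would invoke Assumption \ref{asmp2} (which is available in the paper's overall setting even though not explicitly listed in this lemma's hypotheses) to conclude that each element of $2^{k+1}S'$ has a unique unordered representation as a sum of $m := 2^{k+1}$ distinct elements of $S$; this uses $m \leq 2^{r-1} \leq 2^r$. It follows that $|2^{k+1}S'| = \binom{|S|}{m}$, and with $a := |\{s \in S : \xi \cdot s = 0\}|$, $b := |\{s \in S : \xi \cdot s = 1\}|$ (so $a + b = |S|$ and $a - b = \widehat{S}(\xi)$), the Fourier coefficient becomes the Krawtchouk-type generating coefficient
$$\widehat{2^{k+1}S'}(\xi) = [z^m](1+z)^a(1-z)^b = \sum_{j=0}^m (-1)^j \binom{b}{j}\binom{a}{m-j}.$$
Assumption \ref{asmp3} gives $|a - b| \leq (1-\epsilon)|S|$, hence $\min(a,b) \geq \epsilon|S|/2$. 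Because $|\widehat{2^{k+1}S'}(\xi)|$ is symmetric in $(a,b)$ (as $m$ is even), I take $a \geq b$ without loss of generality.

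Next I would isolate single terms in the Vandermonde-type expansion to get matching two-sided bounds. Since $m$ is even,
$$\binom{|S|}{m} - \widehat{2^{k+1}S'}(\xi) = 2\sum_{j\text{ odd}} \binom{b}{j}\binom{a}{m-j} \geq 2b\binom{a}{m-1},$$
and applying $a \geq |S|/2$, $b \geq \epsilon|S|/2$, together with $|S| \gg m$ from Assumption \ref{asmp1}, this produces $\widehat{2^{k+1}S'}(\xi)/|2^{k+1}S'| \leq 1 - \Omega(m\epsilon)$ (with the implied constant depending only on $r$). Symmetrically,
$$\widehat{2^{k+1}S'}(\xi) + \binom{|S|}{m} = 2\sum_{j\text{ even}} \binom{b}{j}\binom{a}{m-j} \geq 2\binom{a}{m},$$
which combined with $\binom{a}{m} = \Omega(\binom{|S|}{m})$ yields $\widehat{2^{k+1}S'}(\xi)/|2^{k+1}S'| \geq -1 + \Omega(1)$. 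Combining the two bounds delivers $|\widehat{2^{k+1}S'}(\xi)|/|2^{k+1}S'| \leq 1 - c\epsilon$ for some $c = c(r) > 0$.

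The main obstacle is just the bookkeeping required to certify that the ratios $\binom{a}{m-1}/\binom{|S|}{m-1}$ and $\binom{a}{m}/\binom{|S|}{m}$ are bounded below by constants depending only on $r$. This is a direct consequence of $a \geq |S|/2$ together with $|S| \geq 2^{2r}$ and $m \leq 2^{r-1}$ from Assumption \ref{asmp1}: every factor $(a-i)/(|S|-i)$ in the telescoping product is at least $1/2 - m/|S| \geq 1/4$, so the full product is at least $(1/4)^m$. With these elementary estimates the two-sided bound above yields the required uniform $c = c(r)$.
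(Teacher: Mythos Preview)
Your argument is correct and takes a genuinely different route from the paper's proof. The paper expresses the adjacency matrix of $\Cay(\F_2^t,mS')$ (with $m=2^{k+1}$) as a polynomial in $\mathbf A_{\Cay(\F_2^t,S)}$ via M\"obius inversion over set partitions of $[m]$, then bounds the resulting polynomial $f(x)=\frac{1}{m!}\sum_P\mu(P)d^{|P|}x^{o(P)}$ on $[0,1-\epsilon]$ using an auxiliary estimate $\sum_{|P|=m-a}|\mu(P)|\leq m^{2a}$. You instead exploit the explicit character theory of $\F_2^t$: each eigenvalue is a Krawtchouk-type coefficient $[z^m](1+z)^a(1-z)^b$, and you read off both bounds by isolating the $j=0$ and $j=1$ terms of its Vandermonde expansion. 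Your approach is shorter and more elementary, avoiding any partition combinatorics; the paper's approach, by contrast, makes no use of the characters of $\F_2^t$ and would transfer verbatim to Cayley graphs over other abelian groups.

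One remark on hypotheses: you invoke Assumption~\ref{asmp2} to guarantee that every element of $2^{k+1}S'$ has a unique unordered representation, whereas the lemma as stated only lists Assumptions~\ref{asmp1} and~\ref{asmp3}. The paper's proof has the same hidden dependence---its identification of the edges of $m!\cdot\Cay(\F_2^t,mS')$ with ordered $m$-tuples of distinct generators is only a bijection when such collisions do not occur (equivalently, when $mS'$ is treated as a multiset). Since every use of the lemma in the paper is under all three assumptions, this is harmless, and your explicit acknowledgment of the point is entirely appropriate.
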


\begin{proof}
For brevity, write $m=2^{k+1}$ and $|S|=d$. Let $\mathbf A$ denote the adjacency matrix of $\Cay(\F_2^t,S)$ and let its eigenvalues be $d=\lambda_1\geq|\lambda_2|\geq\cdots\geq|\lambda_N|$.

Observe that the edges in $\Cay(\F_2^t,S)^m$ incident to vertex $x$ are in bijection with $m$-tuples $(s_1,\ldots,s_m)\in S^m$, i.e., for each tuple there is an edge $(x,x+s_1+\cdots+s_m)$. Similarly, note that the edges in $m!\cdot\Cay(\F_2^t,mS')$ are in bijection with the tuples $(s_1,\ldots,s_m)$ where all $m$ coordinates are distinct.

For a partition $P$ of $[m]$, say that a tuple $(s_1,\ldots,s_m)$ is of type $P$ if whenever $i,j$ are in the same part of $P$ then $s_i=s_j$. Write $A_P(S)$ for the multiset with element $s_1+\cdots+s_m$ for each $(s_1,\ldots,s_m)$ of type $P$.

By M\"obius inversion, we know that
\[\mathbf A_{m!\cdot\Cay(\F_2^t,mS')}=\sum_{P}\mu(P)\mathbf A_{\Cay(\F_2^t,A_P(S))},\]
where $\mu(P)$ is the M\"obius function $(-1)^{m-|P|}\prod_{p\in P}(|p|-1)!$. For a partition $P$, let $e(P)$ be the number of even-sized parts and $o(P)$ be the number of odd-sized parts. It is not hard to see that
\[\mathbf A_{\Cay(\F_2^t,A_P(S))}=d^{e(P)}\mathbf A^{o(P)}.\]
Therefore, we conclude that
\[\mathbf A_{\Cay(\F_2^t,mS')}=\frac1{m!}\sum_{P}\mu(P)d^{e(P)}\mathbf A^{o(P)}.\]

Write \[f(x)=\frac1{m!}\sum_{P}\mu(P)d^{|P|}x^{o(P)}.\] Note that since $m=2^{k+1}$ is even, any partition of $[m]$ has an even number of odd-sized parts. Thus, $f(x)$ is an even function. Therefore, the second largest eigenvalue of $\mathcal A_{\Cay(\F_2^t,mS')}$ is bounded in absolute value by $\sup_{0\leq x\leq1-\epsilon}|f(x)|$. We can upper bound $f(x)$ by
\begin{equation}
\label{eq:f-upper}
\begin{split}
f(x)&\leq (1-\epsilon/2)\frac1{m!}\sum_{P}\mu(P)d^{|P|}-(1-\epsilon/2-x^m)\frac{d^m}{m!}\\
&\qquad+\frac1{m!}\sum_{P\neq(1,\ldots,1)}|\mu(P)|d^{|P|}\left|1-\epsilon/2-x^{o(P)}\right|.
\end{split}\end{equation}
The first term is exactly $(1-\epsilon/2)\binom dm$. To bound the last term, we use the following inequality.

\begin{claim}
\label{thm:mobius-sum}
\[\sum_{P\vdash [m]\atop{|P|=m-a}}|\mu(P)|\leq m^{2a}.\]
\end{claim}

\begin{proof}
We use $|\mu(P)|=\prod_{p\in P}(|p|-1)!\leq\prod_{p\in P}|p|!$. Now
\[\sum_{P\vdash [m]\atop{|P|=m-a}}\prod_{p\in P}|p|!\]
has a combinatorial interpretation as the number of partitions of $[m]$ into $m-a$ parts where the elements of each part are ordered. Such a partition can be represented schematically as a directed graph on $m$ labeled vertices where each connected component is a directed path. The condition that the partition has $m-a$ parts is equivalent to the constraint that the graph has $a$ edges. But there are $m(m-1)$ choices for a single edge, so $(m(m-1))^a\leq m^{2a}$ is a simple upper bound on the number of $m$-vertex $a$-edge directed graphs.
\end{proof}

We are interested in the case where $m=2^{k+1} \leq 2^{r-1}$ which, with Assumption~\ref{asmp1}, implies that $d\geq 4m^2$. Together with Claim \ref{thm:mobius-sum} this implies that \[\sum_{P\neq(1,\ldots,1)}|\mu(P)|d^{|P|}=\sum_{a>1}\sum_{|P|=m-a}|\mu(P)|d^{m-a}\leq d^m\sum_{a>1}\left(\frac{m^2}d\right)^a\leq 2m^2d^{m-1}.\]
Using this with (\ref{eq:f-upper}) gives
\[\begin{split}
f(x)&\leq(1-\epsilon/2)\binom dm-(1-\epsilon/2-x^m)\frac{d^m}{m!}\\
&\qquad+\max\{\epsilon/2,1-\epsilon/2-x^m\}\frac{2m^2d^{m-1}}{m!}
\end{split}\]for $0\leq x\leq 1-\epsilon$, since $\left|1-\epsilon/2-x^{o(P)}\right|$ is extremized when $o(P)=0,m$. But
\[\max\{\epsilon/2,1-\epsilon/2-x^m\}\frac{2m^2d^{m-1}}{m!}\leq(1-\epsilon/2-x^m)\frac{d^m}{m!},\]
since $1-\epsilon/2-x^m\geq1-\epsilon/2-(1-\epsilon)=\epsilon/2$ and $\frac{2m^2d^{m-1}}{m!}\leq\frac{d^m}{m!}$. Therefore, \[f(x)\leq (1-\epsilon/2)\binom dm.\]

For the lower bound, we use the analogous inequality
\[\begin{split}
f(x)&\geq (1-\epsilon/2)\binom dm-(1-\epsilon/2-x^m)\frac{d^m}{m!}\\
&\qquad-\max\{\epsilon/2,1-\epsilon/2-x^m\}\frac{2m^2d^{m-1}}{m!}.
\end{split}\]
Since 
\[\max\{\epsilon/2,1-\epsilon/2-x^m\}\frac{2m^2d^{m-1}}{m!}\leq\frac12(1-\epsilon/2-x^m)\frac{d^m}{m!},\]
we conclude
\[f(x)\geq(1-\epsilon/2)\binom dm-\frac32(1-\epsilon)\frac{d^m}{m!}.\]
Since $d\geq 4m^2$, we have
\[\binom dm\geq \left(\frac{d-m}{d}\right)^m\frac{d^m}{m!}\geq\left(1-\frac1{4m}\right)^m\frac{d^m}{m!} \geq e^{-2/7} d^m/m!.\]
Combining these inequalities yields 
\[f(x)\geq\left(1-\frac32e^{2/7}\right)(1-\epsilon/2)\binom dm\approx -0.996(1-\epsilon/2)\binom dm,\]
as required.
\end{proof}

Note that Lemma \ref{thm:Cay-walks} only requires Assumption \ref{asmp1}. Therefore, if we define $G_{\Cay}$ to be the multigraph with vertex set $\F_2^t\times\mathcal T_{r-k}(2^kS')$ and $c|2^kS'|^{M-|\mathcal P_{r-k}|-2}$ edges between $(x,\s)$ and $(y,\tbf)$ if $xy$ is an edge in $\Cay(\F_2^t,(2^kS'+2^kS')\setminus\{0\})$, then we know that $\G_{r-k,t,2^kS'}^M$ contains $G_{\Cay}$ as a subgraph. Furthermore, it (essentially) follows from Lemma \ref{thm:asmp3-lower} that $G_{\Cay}$ is an expander. However, this does not imply that $\G_{r-k,t,2^kS'}$ is an expander since we do not have a degree bound.

Lemma \ref{thm:degree-lower} gives us the required degree bound on $\G_{r-k,t,2^kS'}[U]$, so to prove that this subgraph is an expander all we need to do is prove that $G_{\Cay}[U]$ is a subgraph of $\G_{r-k,t,2^kS'}[U]^M$. This follows by the same technique as in the proof of Lemma \ref{thm:Cay-walks} with some small modifications.

\begin{lemma}
\label{thm:single-bubble-lower}
Given $r,t,S$ satisfying Assumption \ref{asmp1} and $0\leq k\leq r-2$, there exists a constant $c=c(r)>0$ such that for each $I\in\mathcal P_{r-k}$, each $(x,\s)\in\F_2^t\times\mathcal T'$ and each $a\in 2^kS'$ disjoint from all coordinates of $\s$, there are at least $c|2^kS'|^{|I|-1}$ walks of length $|I|$ in $\G_{r-k,t,2^kS'}[U]$ which begin at $(x,\s)$ and end at an element of the form $(x,\s')$ where $s'_I=a$ and $s_J=s'_J$ for all $J\in\mathcal P_{r-k}$ with $J\nsubseteq I$.
\end{lemma}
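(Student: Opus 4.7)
The plan is to imitate the Boolean bubbling argument of Lemma \ref{thm:single-bubble} directly inside the graph $\G_{r-k,t,2^kS'}$. Since Lemma \ref{thm:single-bubble} relies only on Assumption \ref{asmp1} (which $r-k,t,2^kS'$ also satisfy, as $|2^kS'|$ is much larger than $|S|$), the same sequence of moves yields walks of length $|I|$ in $\G_{r-k,t,2^kS'}$ from $(x,\s)$ to vertices of the form $(x,\s')$ with $s'_I=a$ and $s_J=s'_J$ for $J\nsubseteq I$. The nontrivial extra task, and the main obstacle, will be to ensure that all intermediate vertices actually lie in the induced subgraph on $U=\F_2^t\times\mathcal T'$.

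Following Lemma \ref{thm:single-bubble}, I will write $I=\{k_1,\ldots,k_l\}$, set $I_i=\{k_1,\ldots,k_i\}$ and $\s^0=\s$, insert $a$ at $\{k_1\}$ to form $\s^1$, and for $2\leq i\leq l$ define $\s^i$ from $\s^{i-1}$ by the swap-and-insert formulas (\ref{eq:bubble-coordinates-1}) and (\ref{eq:bubble-coordinates}), with each $a_i\in 2^kS'$ still to be chosen. The verification that $e(x,\s^{i-1})$ and $e(x,\s^i)$ differ in exactly one coordinate is then identical to the computation in Lemma \ref{thm:single-bubble}, so consecutive vertices are adjacent in $\G_{r-k,t,2^kS'}$.

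The heart of the matter is guaranteeing $\s^i\in\mathcal T'$ at every step. Here I will invoke Assumption \ref{asmp2}: each element of $2^kS'$ has a \emph{unique} representation as a sum of $2^k$ distinct elements of $S$. Thus $\s^i\in\mathcal T'$ reduces to the condition that the unique $S$-representations of its $|\mathcal P_{r-k}|$ coordinates are pairwise disjoint subsets of $S$. Given $\s^{i-1}\in\mathcal T'$, and since the bubbling move only swaps two coordinates (leaving the collection of $S$-elements in use unchanged) and replaces the $\{k_i\}$-coordinate with $a_i$, it suffices that $a_i$'s representation avoid the $S$-elements used by the other coordinates of $\s^i$ — a forbidden set of size at most $2^k(|\mathcal P_{r-k}|-1)<2^{r-1}$. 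The given disjointness hypothesis on $a$ handles the first step. By Assumption \ref{asmp1}, the number of valid $a_i$ at each later step is $\binom{|S|-2^{r-1}}{2^k}=\Omega(|2^kS'|)$, so multiplying over the $|I|-1$ free choices for $a_2,\ldots,a_l$ yields $\Omega(|2^kS'|^{|I|-1})$ walks, distinct since $a_i$ appears as the $\{k_i\}$-coordinate of $\s^i$. This matches the claimed bound.
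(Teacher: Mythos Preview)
Your proof is essentially the same as the paper's: both transplant the Boolean-bubbling walk of Lemma~\ref{thm:single-bubble} verbatim into $\G_{r-k,t,2^kS'}$, choose the inserted elements $a_2,\ldots,a_l$ so that their $S$-representations are disjoint from those currently in use, and count the resulting choices as $\Omega(|2^kS'|^{|I|-1})$. One small wrinkle: you invoke Assumption~\ref{asmp2} to speak of \emph{the} unique $S$-representation of each coordinate, but the lemma's hypothesis only lists Assumption~\ref{asmp1}; the paper sidesteps this by simply tracking a chosen representation for $a$ and each $a_i$ throughout (which is also how the hypothesis ``$a$ disjoint from all coordinates of $\s$'' should be read), and your argument goes through unchanged with that phrasing.
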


\begin{proof}
Following the proof of Lemma \ref{thm:single-bubble}, write $|I|=l$ and note that for each sequence $a_2,\ldots,a_l\in 2^kS'$ we can define $\s=\s^0,\s^1,\ldots,\s^l$ by (\ref{eq:bubble-coordinates-1}) and (\ref{eq:bubble-coordinates}) such that each pair $e(x,\s^i)$ and $e(x,\s^{i+1})$ agree in all but one coordinate.

These are exactly the desired paths as long as $\s^1,\ldots,\s^l$ lie in $\mathcal T'$. To satisfy this condition we simply need to pick $a_2,\ldots,a_l$ disjoint from each other and from $a$ and the coordinates of $\s$. The number of ways to do this \[\binom{|S|-2^{r-1}}{2^k}\binom{|S|-2^{r-1}-2^k}{2^k}\cdots\binom{|S|-2^{r-1}-(l-2)2^k}{2^k}\geq c|2^kS'|^{l-1}.\]The inequality follows by Assumption \ref{asmp1}.
\end{proof}

\begin{lemma}
\label{thm:full-bubble-lower}
Given $r,t,S$ satisfying Assumption \ref{asmp1} and $0\leq k\leq r-2$, there exist constants $c=c(r)>0$ and $M'=M'(r-k)$ such that for $x\in \F_2^t$ and $\s,\tbf\in\mathcal T'$, there are at least $c|2^kS'|^{M'-|\mathcal P_{r-k}|}$ walks of length $M'$ from $(x,\s)$ to $(x,\tbf)$ in $\G_{r-k,t,2^kS'}[U]$.
\end{lemma}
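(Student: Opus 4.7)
The plan is to imitate the proof of Lemma \ref{thm:full-bubble} almost verbatim, but with every single-bubble move supplied by Lemma \ref{thm:single-bubble-lower} (in a mildly restricted form) instead of Lemma \ref{thm:single-bubble}. The main object to track carefully is the $S$-support of each $\s^i\in\mathcal T'$, since membership in $\mathcal T'$ is the condition that the underlying $2^k|\mathcal P_{r-k}|$ elements of $S$ are all distinct, which is stronger than what is required for $\mathcal T_{r-k}(2^kS')$.

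First I would fix any $\ubf\in\mathcal T'$ whose underlying $S$-support is disjoint from the $S$-supports of $\s$ and $\tbf$. Since the $S$-supports of $\s$ and $\tbf$ together have size at most $2\cdot 2^k|\mathcal P_{r-k}|\le 2^r$, Assumption \ref{asmp1} guarantees $\Omega(|2^kS'|^{|\mathcal P_{r-k}|})$ such choices of $\ubf$. I would then fix an ordering $I_1,\ldots,I_{|\mathcal P_{r-k}|}$ of $\mathcal P_{r-k}$ with $I_i\not\subseteq I_j$ for $i<j$, so that bubbling $u_{I_{i+1}}$ into position does not disturb the already-placed $u_{I_1},\ldots,u_{I_i}$.

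Next I would apply Lemma \ref{thm:single-bubble-lower} in sequence to produce walks
\[(x,\s)\overset{|I_1|}{\leadsto}(x,\ubf^1)\overset{|I_2|}{\leadsto}(x,\ubf^2)\leadsto\cdots\leadsto(x,\ubf),\]
where $\ubf^i\in\mathcal T'$ agrees with $\ubf$ in coordinates $I_1,\ldots,I_i$. To make the composition legal we need the analog of Remark \ref{thm:single-bubble-restricted} in this setting: at step $i+1$ we must count walks avoiding an additional forbidden $S$-set $X$, namely the union of the $S$-supports of $u_{I_j}$ for $j>i$. This set has size at most $2^k|\mathcal P_{r-k}|\le 2^{r-1}$, so the proof of Lemma \ref{thm:single-bubble-lower} goes through unchanged: the binomial factors in that proof become $\binom{|S|-2^{r-1}-|X|-(q-1)2^k}{2^k}$, which Assumption \ref{asmp1} still bounds from below by $\Omega(|2^kS'|)$ per step. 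This gives $\Omega(|2^kS'|^{|I_{i+1}|-1})$ walks at step $i+1$.

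Multiplying across the $|\mathcal P_{r-k}|$ bubbling steps produces $\Omega(|2^kS'|^{M''-|\mathcal P_{r-k}|})$ walks of length $M''=\sum_{I\in\mathcal P_{r-k}}|I|$ from $(x,\s)$ to $(x,\ubf)$, and symmetrically from $(x,\ubf)$ to $(x,\tbf)$. Concatenating, setting $M'=2M''$, and summing over the $\Omega(|2^kS'|^{|\mathcal P_{r-k}|})$ legal choices of $\ubf$ yields $\Omega(|2^kS'|^{M'-|\mathcal P_{r-k}|})$ walks in total, as required. The only non-routine point is confirming the restricted version of Lemma \ref{thm:single-bubble-lower}, but since the forbidden set $X$ has size bounded by a constant depending only on $r$, no new ideas are needed beyond the standard computation.
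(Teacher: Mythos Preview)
Your proposal is correct and follows essentially the same approach as the paper: fix an intermediate $\ubf\in\mathcal T'$ with $S$-support disjoint from those of $\s$ and $\tbf$, use repeated applications of Lemma~\ref{thm:single-bubble-lower} (with the restricted version analogous to Remark~\ref{thm:single-bubble-restricted}) to walk from $(x,\s)$ to $(x,\ubf)$ and then to $(x,\tbf)$, set $M'=2M''$, and sum over the $\Omega(|2^kS'|^{|\mathcal P_{r-k}|})$ choices of $\ubf$. The paper's proof is terser, simply citing ``exactly the same argument'' as Lemma~\ref{thm:full-bubble}, while you have spelled out the restricted-bubbling detail and the size bounds explicitly; both are the same argument.
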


\begin{proof}
Following the proof of Lemma \ref{thm:full-bubble}, fix $\ubf\in\mathcal T'$ all of whose coordinates are disjoint from all the coordinates of both $\s$ and $\tbf$. By exactly the same argument, we can repeatedly use Lemma \ref{thm:single-bubble-lower} to find $c|2^kS'|^{M'-2|\mathcal P_{r-k}|}$ walks of length $M'$ from $(x,\s)$ to $(x,\ubf)$ to $(x,\tbf)$. Finally, note that there are
\[\begin{split}\binom{|S|-(2^r-2^{k+1})}{2^k}&\binom{|S|-(2^r-2^{k+1})-2^k}{2^k}\\&\cdots\binom{|S|-(2^r-2^{k+1})-(|\mathcal P_{r-k}|-1)2^k}{2^k}\geq c'|2^kS'|^{|\mathcal P_{r-k}|}\end{split}\]
choices for $\ubf$, completing the proof.
\end{proof}

\begin{lemma}
\label{thm:Cay-walks-lower}
Given $r,t,S$ satisfying Assumption \ref{asmp1} and $0\leq k\leq r-2$, there exist constants $c=c(r)>0$ and $M=M(r-k)$ such that, given $x$ adjacent to $y$ in $\Cay(\F_2^t,2^{k+1}S')$ and $\s,\tbf\in\mathcal T'$, there are at least $c|2^kS'|^{M-|\mathcal P_{r-k}|-2}$ walks of length $M$ from $(x,\s)$ to $(y,\tbf)$ in $\G_{r-k,t,2^kS'}[U]$.
\end{lemma}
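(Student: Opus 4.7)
The plan is to mimic the proof of Lemma~\ref{thm:Cay-walks} verbatim, replacing $S$ by $2^kS'$, the set $\mathcal T$ by $\mathcal T'$, and the application of Lemma~\ref{thm:full-bubble} by Lemma~\ref{thm:full-bubble-lower}. Assuming $r-k \geq 3$, define $I_1 = \{3, \ldots, \lceil(r-k)/2\rceil+1\}$ and $I_2 = \{1, \lceil(r-k)/2\rceil+2, \ldots, r-k\}$; these are disjoint sets in $\mathcal P_{r-k}$ whose union is $[r-k]\setminus\{2\}$. Since $x$ is adjacent to $y$ in $\Cay(\F_2^t, 2^{k+1}S')$, I would write $y-x = s_1+\cdots+s_{2^{k+1}}$ for $2^{k+1}$ distinct elements of $S$ and split these into two halves of size $2^k$ to produce $a_1, a_2 \in 2^kS'$ with disjoint $S$-supports such that $y = x + a_1 + a_2$.

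Next, I would choose $\ubf \in \mathcal T'$ with $u_{I_1} = a_1$ and $u_{I_2} = a_2$, together with some $b \in 2^kS'$ whose underlying $S$-support is disjoint from all coordinate-supports of $\ubf$. Then define $\ubf'$ exactly as in~(\ref{eq:cay-walk-coordinates}): swap the $I_1$- and $I_2$-coordinates of $\ubf$ and replace its $\{2\}$-coordinate with $b$. By construction $\ubf, \ubf' \in \mathcal T'$, and the identical computation as in Lemma~\ref{thm:Cay-walks}---which depends only on the sums $u_I$ and not on their $S$-decompositions---shows that $(x, \ubf)$ and $(y, \ubf')$ are adjacent in $\G_{r-k, t, 2^kS'}[U]$.

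To finish, I would count walks of the form $(x, \s) \overset{M'}{\leadsto} (x, \ubf) \overset{1}{\leadsto} (y, \ubf') \overset{M'}{\leadsto} (y, \tbf)$. Two applications of Lemma~\ref{thm:full-bubble-lower} give $\Omega(|2^kS'|^{M' - |\mathcal P_{r-k}|})$ choices for each outer bubbling walk. The number of valid pairs $(\ubf, \ubf')$ under the imposed constraints is $\Omega(|2^kS'|^{|\mathcal P_{r-k}| - 1})$, since the $|\mathcal P_{r-k}|-2$ unconstrained coordinates of $\ubf$ together with the value of $b$ each contribute $\Omega(|2^kS'|)$ choices, with Assumption~\ref{asmp1} providing enough slack for the disjointness. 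Setting $M = 2M' + 1$ and multiplying yields $\Omega(|2^kS'|^{M - |\mathcal P_{r-k}| - 2})$ walks, as required.

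The main obstacle is the disjointness bookkeeping: one must verify at every stage that the $2^k$-element $S$-decompositions of the coordinates of every intermediate tuple remain pairwise disjoint, so that the tuples stay in $\mathcal T'$. Since Assumption~\ref{asmp1} guarantees $|S| \geq 2^{2r}$ while the total number of $S$-elements ever in play is a bounded constant times $|\mathcal P_{r-k}|$, there is always ample room to choose fresh disjoint supports at each step, exactly parallel to the analogous bookkeeping in Lemmas~\ref{thm:single-bubble-lower} and~\ref{thm:full-bubble-lower}. The degenerate case $r-k=2$, in which $|\mathcal P_{r-k}|=1$ and the $I_1, I_2$ construction collapses, can be handled separately by observing that $\mathcal T' = 2^kS'$ and that a direct three-step walk $(x, u_0) \to (x, a_1) \to (y, a_2) \to (y, u_{\mathrm{end}})$ with $a_1 + a_2 = y-x$, padded by arbitrary ``swap the second coordinate'' moves on either end, already yields the required count.
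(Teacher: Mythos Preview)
Your proposal is correct and follows essentially the same approach as the paper: choose disjoint $I_1,I_2\in\mathcal P_{r-k}$ with $I_1\sqcup I_2=[r-k]\setminus\{2\}$, decompose $y-x$ as $a_1+a_2$ with $a_1,a_2\in 2^kS'$, apply Lemma~\ref{thm:full-bubble-lower} on either side of the single swap step, and count choices of $(\ubf,\ubf')$. Your treatment is in fact slightly more careful than the paper's in two respects: you explicitly track that the $2^k$-element $S$-supports of all coordinates must remain pairwise disjoint so that intermediate tuples lie in $\mathcal T'$, and you separately handle the degenerate case $r-k=2$ (where the $I_1,I_2$ construction is unavailable), which the paper's proof does not address even though the lemma's stated range $0\le k\le r-2$ nominally includes it.
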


\begin{proof}
Following the proof of Lemma \ref{thm:Cay-walks}, let $I_1,I_2$ be disjoint elements of $\mathcal P_{r-k}$ such that $I_1\sqcup I_2=[r-k]\setminus\{2\}$. Now since $x$ is adjacent to $y$ in $\Cay(\F_2^t,2^{k+1}S')$, there exist $a_1,a_2\in 2^kS'$ such that $y=x+a_1+a_2$. For any $\ubf\in T'$ satisfying $u_{I_1}=a_1$ and $u_{I_2}=a_2$ and any $b$ disjoint from all coordinates of $\ubf$, define $\ubf'$ as in (\ref{eq:cay-walk-coordinates}). Then, by the exact same argument, we can use Lemma \ref{thm:full-bubble-lower} twice to find $c|2^kS'|^{2M'-2|\mathcal P_{r-k}|}$ walks of length $M=2M'+1$ from $(x,\s)$ to $(x,\ubf)$ to $(y,\ubf')$ to $(y,\tbf)$. Finally, note that there are
\[\binom{|S|-2\cdot2^k}{2^k}\binom{|S|-3\cdot2^k}{2^k}\cdots\binom{|S|-|\mathcal P_{r-k}|\cdot2^k}{2^k}\geq c'|2^kS'|^{|\mathcal P_{r-k}|-1}\] 
choices for $(\ubf,\ubf')$, completing the proof.
\end{proof}

\begin{lemma}
\label{thm:expander-lower}
For $r\geq 3$, $t$, $\epsilon>0$ and $S\subset\F_2^t$ satisfying Assumptions \ref{asmp1}, \ref{asmp2}, \ref{asmp3} and $0\leq k\leq r-2$, there exists a constant $c=c(r)>0$ such that $\G^{(r-k)}_{r,t,S}$ is a $c\epsilon$-expander.
\end{lemma}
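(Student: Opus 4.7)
By Lemma \ref{thm:isom}, the graph $\G^{(r-k)}_{r,t,S}$ is isomorphic to the induced subgraph $\G_{r-k,t,2^kS'}[U]$, so it suffices to establish the $c\epsilon$-expansion for the latter. The plan is to carry out the same three-stage spectral argument used in the proof of Lemma \ref{thm:expander}, substituting the ``lower-order'' versions of the ingredients developed above.

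First, I would invoke Lemma \ref{thm:asmp3-lower} to conclude directly that $\Cay(\F_2^t, 2^{k+1}S')$ is an $\Omega(\epsilon)$-expander; this replaces the elementary squaring identity used in the proof of Lemma \ref{thm:expander}. Then define a multigraph $G_{\Cay}$ on vertex set $\F_2^t \times \mathcal T'$ with $c|2^kS'|^{M-|\mathcal P_{r-k}|-2}$ parallel edges between $(x,\s)$ and $(y,\tbf)$ whenever $xy$ is an edge of $\Cay(\F_2^t, 2^{k+1}S')$, the constants $c, M$ coming from Lemma \ref{thm:Cay-walks-lower}. Since $G_{\Cay}$ is (up to scaling) the tensor product of the Cayley graph with the complete graph with self-loops on $|\mathcal T'|$ vertices, it inherits $\Omega(\epsilon)$-expansion.

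Second, Lemma \ref{thm:Cay-walks-lower} exhibits $G_{\Cay}$ as a subgraph of $\G_{r-k,t,2^kS'}[U]^M$; note that every endpoint produced by that lemma lies in $U$, so this containment takes place inside the induced subgraph. Lemma \ref{thm:degree-lower} gives a uniform degree bound for $\G_{r-k,t,2^kS'}[U]$, and iterating it shows that both $G_{\Cay}$ and $\G_{r-k,t,2^kS'}[U]^M$ have degrees of order $\Theta(|2^kS'|^M)$ with ratio bounded below by a constant depending only on $r$. Decomposing
\[
\mathbf A_{\G_{r-k,t,2^kS'}[U]^M} \;=\; \mathbf A_{G_{\Cay}} + \mathbf A_{\G_{r-k,t,2^kS'}[U]^M \setminus G_{\Cay}}
\]
and using that the difference is a regular graph of degree $d_2 - d_1$, I can bound $\lambda(\G_{r-k,t,2^kS'}[U]^M) \le (1 - (d_1/d_2)\cdot\Omega(\epsilon))d_2$.

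Third, applying the identity $\mathbf A_{H^M} = \mathbf A_H^M$ to extract the $M$-th root yields $\lambda(\G_{r-k,t,2^kS'}[U]) \le (1 - \Omega(\epsilon)/M)\, d$, where $d$ is the degree from Lemma \ref{thm:degree-lower}. Since $M = M(r-k)$ depends only on $r$, this gives the desired $c\epsilon$-expansion, which transfers to $\G^{(r-k)}_{r,t,S}$ via Lemma \ref{thm:isom}. The main obstacle I anticipate is purely bookkeeping: one must verify that replacing $\G_{r-k,t,2^kS'}$ by its induced subgraph on $U$ does not break the tensor-product structure of $G_{\Cay}$ (which it does not, since $U = \F_2^t\times\mathcal T'$ is a ``rectangle'') and that the degree bound of Lemma \ref{thm:degree-lower} survives this restriction, which it does by construction.
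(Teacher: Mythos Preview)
Your proposal is correct and follows essentially the same approach as the paper: reduce to $\G_{r-k,t,2^kS'}[U]$ via Lemma~\ref{thm:isom}, then rerun the three-step spectral argument of Lemma~\ref{thm:expander} with Lemmas~\ref{thm:asmp3-lower}, \ref{thm:degree-lower} and \ref{thm:Cay-walks-lower} in place of their Section~\ref{ssec:bubble} counterparts. The paper's proof is slightly terser but invokes exactly the same ingredients in the same order.
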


\begin{proof}
Following the proof of Lemma \ref{thm:expander}, define $G_{\Cay}$ to be the multigraph with vertex set $U$ and  $c|2^kS'|^{M-|\mathcal P_{r-k}|-2}$ edges between $(x,\s)$ and $(y,\tbf)$ if $xy$ is an edge in $\Cay(\F_2^t,2^{k+1}S')$. Here $c,M$ are the same constants as in the proof of Lemma \ref{thm:Cay-walks-lower}. Note that $G_{\Cay}$ is regular of degree $d_1\geq c'|2^kS'|^M$ for some $c'=c'(r)>0$. Furthermore, by Lemma \ref{thm:asmp3-lower}, we know that $G_{\Cay}$ is a $c^* \epsilon$-expander.

By Lemma \ref{thm:degree-lower}, we know that $\G_{r-k,t,2^kS'}[U]^M$ is regular of degree $d_2\leq c''|2^kS'|^M$ for some $c''=c(r)$ and, by Lemma \ref{thm:Cay-walks-lower}, we know that $G_{\Cay}$ is a subgraph of $\G_{r-k,t,2^kS'}[U]^M$. By the exact same proof as Lemma \ref{thm:expander}, this implies that $\G_{r-k,t,2^kS'}[U]$ is a $c_0 \epsilon$-expander for some $c_0$ depending on $c', c'', c^*$ and $M$. Since this graph is isomorphic to $\G^{(r-k)}_{r-k,t,S}$, this proves the desired result.
\end{proof}

\begin{mainthm}{\ref{thm:main-lower}}
For $r\geq 3$, $t$, $\epsilon>0$ and $S\subset\F_2^t$ satisfying Assumptions \ref{asmp1}, \ref{asmp2}, \ref{asmp3}, there exists a constant $c=c(r)>0$ such that for every $1\leq k\leq r-1$, the $k$-th order walk graph $\Gw^{(k)}(\Hsf_{r,t,S})$ is a $c\epsilon$-expander.
\end{mainthm}

\begin{proof}
First assume that $k>1$. This case is essentially the same as the proof of Theorem \ref{thm:main} from Lemma \ref{thm:expander}.

By Lemma \ref{thm:degree-lower}, we know that there is some $D_k$ such that every $k$-edge of $\Hsf_{r,t,S}$ is contained in exactly $D_k$ of the $(k+1)$-edges.

As before, we consider three graphs: $\G^{(k+1)}_{r,t,S}$, $\Gw'^{(k+1)}(\Hsf_{r,t,S})$, $\Gw^{(k)}(\Hsf_{r,t,S})$. The vertices of $\G^{(k+1)}_{r,t,S}$ are ordered $(k+1)$-tuples $(v_1,\ldots,v_{k+1})$ such that there exists $(x,\s)\in\F_2^t\times\mathcal T$ with $v_i=e_i(x,\s)$. We define $\Gw'^{(k+1)}(\Hsf_{r,t,S})$ to be the graph whose vertices are the unordered $(k+1)$-edges of $\Hsf_{r,t,S}$ where two $(k+1)$-edges are adjacent if they contain a common $k$-edge. We know that the first two of these graphs are $(k+1)(D_k-1)$-regular, while the third graph is $kD_k$-regular.

Now define a graph homomorphism $\pi\colon \G^{(k+1)}_{r,t,S}\to\Gw'^{(k+1)}(\Hsf_{r,t,S})$ that sends a vertex $(v_1,\ldots,v_{k+1})$ to the vertex $\{v_1,\ldots,v_{k+1}\}$. These two graphs are regular of the same degree and, by Lemma \ref{thm:symmetric-edges} and the discussion preceding it, the preimage of each vertex has size $(k+1)!$. As in the proof of Theorem~\ref{thm:main}, it follows that $\lambda(\Gw'^{(k+1)}(\Hsf_{r,t,S}))\leq\lambda(\G^{(k+1)}_{r,t,S})\leq(1-c\epsilon)(k+1)(D_k-1)$.

Next define $\mathbf B$ to be the incidence matrix whose rows are indexed by the $k$-edges of $\Hsf_{r,t,S}$ and whose columns are indexed by the $(k+1)$-edges of $\Hsf_{r,t,S}$. We know that
\[\mathbf A_{\Gw'^{(k+1)}(\Hsf_{r,t,S})}+(k+1)\mathbf I=\mathbf B^T\mathbf B\]
and
\[\mathbf A_{\Gw^{(k)}(\Hsf_{r,t,S})}+D_k\mathbf I=\mathbf B\mathbf B^T.\]
From the first equation, we conclude that the non-trivial eigenvalues of $\mathbf B^T\mathbf B$ lie in the interval $[0,(1-c\epsilon)(k+1)(D_k-1)+k+1]$, so the non-trivial eigenvalues of $\Gw^{(k)}(\Hsf_{r,t,S})$ lie in the interval $[-D_k,(1-c\epsilon)(k+1)(D_k-1)+k+1-D_k]$. Since $k\geq 2$, $D_k\geq k+1$ and choosing $c$ such that $c\epsilon<1/2$, this interval is contained in $[-(1-c\epsilon)kD_k,(1-c\epsilon)kD_k]$.

Now assume that $k=1$. Note that a first order random walk on $\Hsf_{r,t,S}$ is exactly the same as an ordinary random walk on the graph $\Cay(\F_2^t,2^{r-2}S')$. By Lemma \ref{thm:asmp3-lower}, we know that $\Cay(\F_2^t,2^{r-2}S')$ is a $c\epsilon$-expander, completing the proof.
\end{proof}

\section{A discrepancy result}
\label{sec:discrepancy}

For $H=(V,E)$ an $r$-uniform hypergraph and $V_1,\ldots,V_r\subseteq V$, let $e_H(V_1,\ldots,V_r)$ be the number of tuples $(v_1,\ldots,v_r)\in V_1\times\cdots\times V_r$ such that $\{v_1,\ldots,v_r\}\in E$. In this self-contained section, we prove that our hypergraph $\Hsf_{r,t,S}$ satisfies the pseudorandomness condition that $e_{\Hsf_{r,t,S}}(V_1,\ldots,V_r)$ is always close to its expected value. For simplicity, we write $e(V_1,\ldots,V_r)$ for $e_{\Hsf_{r,t,S}}(V_1,\ldots,V_r)$ throughout this section.

A general result of this type was proven by Parzanchevski \cite{P17}, saying that the desired pseudorandomness condition holds under essentially the hypothesis on the spectra of all the adjacency matrices that we proved in Theorem \ref{thm:expander-lower}. However, Parzanchevski's result only applies when $\frac{|\lambda_2|}{\lambda_1}<\epsilon_0(r)$ where $\epsilon_0(r)$ is a small constant depending on the uniformity, while we have only proved that $\frac{|\lambda_2|}{\lambda_1}<1-c(r)\epsilon$ where $\epsilon$ is the expansion parameter of the original Cayley graph and $c(r)$ is a very small constant depending on the uniformity. We will therefore use a different proof method.

In this section, $\mathcal Q$ will denote an arbitrary multiset supported in $2^{[r]}$. For such a $\mathcal Q$ and $(x,\s)\in\F_2^t\times S^{\mathcal Q}$, define 
\[e_{\mathcal Q,i}(X,\s)=x+\sum_{I\in\mathcal Q\atop{i\in I}}s_I.\] 
Write $e_{\mathcal Q}(x,\s)$ for the $r$-tuple $(e_{\mathcal Q,1}(x,\s),\ldots,e_{\mathcal Q,r}(x,\s))$.

We wish to compute $e(V_1,\ldots,V_r)$, the number of pairs $(x,\s)\in\F_2^t\times\mathcal T$ such that $e(x,\s)\in V_1\times\cdots\times V_r$. We will start by computing $f_{\mathcal Q}(V_1,\ldots,V_r)$, defined to be the number of pairs $(x,\s)\in\F_2^t\times S^{\mathcal Q}$ such that $e_{\mathcal Q}(x,\s)\in V_1\times\cdots\times V_r$. We will then use M\"obius inversion to turn this into a formula for the desired quantity.

\begin{lemma}
\label{thm:disc-with-rep}
For $r\geq3$, $t$, $\epsilon>0$ and $S\subset \F_2^t$ satisfying Assumptions \ref{asmp2}, \ref{asmp3}, $\mathcal Q$ a multiset supported in $2^{[r]}$ and $V_1,\ldots,V_r\subseteq\F_2^t$, 
\[\left|f_{\mathcal Q}(V_1,\ldots,V_r)-\frac{|S|^{|\mathcal Q|}}{(2^t)^{r-1}}|V_1|\cdots|V_r|\right|\leq(1-\epsilon)(r-1)|S|^{|\mathcal Q|}\sqrt{\lvert V_1\rvert\lvert V_r\rvert}.\]
\end{lemma}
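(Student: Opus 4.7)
\emph{Plan.} The approach will be Fourier analytic. I would first Fourier-expand each indicator $\mathbf 1_{V_i}$ in the definition
\[
f_{\mathcal Q}(V_1,\dots,V_r)=\sum_{x\in\F_2^t}\sum_{\s\in S^{\mathcal Q}}\prod_{i=1}^r\mathbf 1_{V_i}\bigl(x+\sigma_i(\s)\bigr).
\]
Writing $\mathbf 1_{V_i}(y)=2^{-t}\sum_{\chi_i\in\F_2^t}\widehat{\mathbf 1_{V_i}}(\chi_i)(-1)^{\chi_i\cdot y}$ and substituting, the sum over $x$ collapses to the constraint $\chi_1+\cdots+\chi_r=0$ (contributing a factor $2^t$), and each independent sum over $s_I\in S$ for $I\in\mathcal Q$ produces a factor $\widehat{\mathbf 1_S}\bigl(\sum_{i\in I}\chi_i\bigr)$. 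This yields the identity
\[
f_{\mathcal Q}(V_1,\dots,V_r)=\frac1{(2^t)^{r-1}}\sum_{\substack{\chi_1,\dots,\chi_r\in\F_2^t\\\chi_1+\cdots+\chi_r=0}}\prod_{i=1}^r\widehat{\mathbf 1_{V_i}}(\chi_i)\prod_{I\in\mathcal Q}\widehat{\mathbf 1_S}\biggl(\sum_{i\in I}\chi_i\biggr),
\]
in which the trivial character tuple $\chi\equiv 0$ contributes exactly the claimed main term $M_r=|S|^{|\mathcal Q|}|V_1|\cdots|V_r|/(2^t)^{r-1}$, reducing the problem to bounding the sum over nonzero $\chi$-tuples.

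To control this error I would use a hybrid (telescoping) argument over the middle coordinates. Setting $\mu_i=|V_i|/2^t$, for $k=1,\dots,r-1$ define
\[
h^{(k)}=\biggl(\prod_{i=k+1}^{r-1}\mu_i\biggr)\sum_{x,\s}\prod_{i=1}^{k}\mathbf 1_{V_i}\bigl(x+\sigma_i(\s)\bigr)\cdot\mathbf 1_{V_r}\bigl(x+\sigma_r(\s)\bigr),
\]
so that $h^{(r-1)}=f_{\mathcal Q}$ and each average over $V_i$ ($k<i<r$) has been carried out in $h^{(k)}$. The telescoping $f_{\mathcal Q}-M_r=(h^{(1)}-M_r)+\sum_{k=2}^{r-1}(h^{(k)}-h^{(k-1)})$ breaks the error into $r-1$ pieces, each of which I aim to bound by $(1-\epsilon)|S|^{|\mathcal Q|}\sqrt{|V_1||V_r|}$.

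The first piece $h^{(1)}-M_r$ reduces, after summing out the averaged middle coordinates, to the discrepancy of the walk count $\langle\mathbf 1_{V_1},\mathbf A^a\mathbf 1_{V_r}\rangle$ in the Cayley graph $\Cay(\F_2^t,S)$, where $\mathbf A$ is its adjacency matrix and $a=|\{I\in\mathcal Q:|I\cap\{1,r\}|=1\}|$; since the nontrivial eigenvalues of $\mathbf A^a$ are bounded by $(1-\epsilon)^a|S|^a$, the standard expander mixing lemma gives the desired bound whenever $a\ge 1$. For each intermediate difference $h^{(k)}-h^{(k-1)}$ with $2\le k\le r-1$, the insertion of the mean-zero function $\phi_k=\mathbf 1_{V_k}-\mu_k$ at slot $k$ forces $\chi_k\ne 0$ in the corresponding Fourier expansion; I would then apply Cauchy--Schwarz together with Parseval's identity $\sum_{\chi}|\widehat{\mathbf 1_V}(\chi)|^2=2^t|V|$ to pair the $\widehat{\mathbf 1_{V_1}}(\chi_1)$ and $\widehat{\mathbf 1_{V_r}}(\chi_r)$ factors (producing the $\sqrt{|V_1||V_r|}$) and invoke the expander bound $|\widehat{\mathbf 1_S}(\eta)|\le(1-\epsilon)|S|$ on a suitable $\lambda$-factor to save the $(1-\epsilon)$.

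The hard part will be verifying the uniform estimate $|h^{(k)}-h^{(k-1)}|\le(1-\epsilon)|S|^{|\mathcal Q|}\sqrt{|V_1||V_r|}$ at each intermediate step, which requires a careful case analysis on the support $T=\{i:\chi_i\ne 0\}$ of the Fourier characters and on which quantities $\eta_I=\sum_{i\in I}\chi_i$ are nonzero. In the degenerate subcases where the characters conspire to make every $\eta_I$ vanish (or where $a=0$ in the first piece), no expander saving is directly available; these will have to be handled by trivial estimates such as $|V_1\cap V_r|\le\sqrt{|V_1||V_r|}$, and the total must still fit within the $(r-1)(1-\epsilon)$ budget, which is comfortable for $r\ge3$.
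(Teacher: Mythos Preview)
Your Fourier identity for $f_{\mathcal Q}$ is correct and immediately isolates the main term, but your telescoping is organised differently from the paper's, and this difference creates a real obstacle. The paper does not keep $V_r$ fixed; instead it builds a chain $f^1,f^2,\ldots,f^r$ where $f^k$ counts pairs $(y,\s)\in\F_2^t\times S^{\mathcal Q^{(k)}}$ with $e^{(k)}_{\mathcal Q}(y,\s)\in V_1\times\cdots\times V_k$, for $\mathcal Q^{(k)}=\{I\in\mathcal Q:0<|I\cap[k]|<k\}$. The point is that passing from $f^k$ to $f^{k+1}$ introduces exactly the \emph{fresh} generators $s_I$ with $I\cap[k+1]\in\{\{k+1\},[k]\}$, so that $f^{k+1}$ is literally an edge count between a weighted set $W_k$ (of total mass $f^k$, all multiplicities at most $|S|^{|\mathcal Q^{(k)}|}$) and $V_{k+1}$ in the graph $\Cay(\F_2^t,S)^m$. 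One application of the expander mixing lemma, together with the crude bound $f^k\le|S|^{|\mathcal Q^{(k)}|}|V_1|$, gives
\[
\Bigl|f^{k+1}-\tfrac{|S|^m}{2^t}f^k\,|V_{k+1}|\Bigr|\le(1-\epsilon)\,|S|^{|\mathcal Q^{(k+1)}|}\sqrt{|V_1|\,|V_{k+1}|}
\]
directly, and telescoping these $r-1$ estimates finishes the proof with no case analysis.

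In your ordering, by contrast, the intermediate difference $h^{(k)}-h^{(k-1)}$ (for $k\ge 2$) carries the indicators $\mathbf 1_{V_1},\ldots,\mathbf 1_{V_{k-1}},\phi_k,\mathbf 1_{V_r}$ simultaneously, so its Fourier expansion runs over $\chi_1,\ldots,\chi_k,\chi_r$ subject to $\sum\chi_i=0$. The Cauchy--Schwarz you describe would pair $\widehat{\mathbf 1_{V_1}}(\chi_1)$ with $\widehat{\mathbf 1_{V_r}}(\chi_r)$, but the remaining factors $\widehat{\mathbf 1_{V_2}}(\chi_2),\ldots,\widehat{\mathbf 1_{V_{k-1}}}(\chi_{k-1}),\widehat{\phi_k}(\chi_k)$ are left unaccounted for: bounding them by $|V_i|$ or summing via $\sum_\chi|\widehat{\mathbf 1_{V_i}}(\chi)|\le 2^t\sqrt{|V_i|}$ produces extraneous factors of $|V_i|$ or $\sqrt{|V_i|}$ for $2\le i\le k$ that do not cancel against the $1/(2^t)^k$ prefactor. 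I do not see how to reach the target $(1-\epsilon)|S|^{|\mathcal Q|}\sqrt{|V_1||V_r|}$ without essentially reverting to the paper's ordering, in which the ``old'' and ``new'' $s_I$ variables decouple at each step. Separately, your treatment of the degenerate case is incorrect: when $a=0$ the trivial bound on $h^{(1)}-M_r$ is $|S|^{|\mathcal Q|}\sqrt{|V_1||V_r|}$ with no $(1-\epsilon)$ saving, and this already exceeds the entire budget $(r-1)(1-\epsilon)|S|^{|\mathcal Q|}\sqrt{|V_1||V_r|}$ whenever $\epsilon>(r-2)/(r-1)$, so it is not ``comfortable for $r\ge 3$''.
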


\begin{proof}
For $1\leq k\leq r$, define $\mathcal Q^{(k)}\subseteq\mathcal Q$ by
\begin{equation*}
    \mathcal Q^{(k)}=\{I\in\mathcal Q:0<|I\cap\{1,\ldots,k\}|<k\}.
\end{equation*}
Furthermore, write $e^{(k)}_{\mathcal Q}(x,\s)$ for the $k$-tuple $(e_{\mathcal Q^{(k)},1}(x,\s),\ldots,e_{\mathcal Q^{(k)},k}(x,\s))$.
Let $F^k(V_1,\ldots,V_k)$ be the set of pairs $(y,\s)\in\F_2^t\times S^{\mathcal Q^{(k)}}$ such that $e^{(k)}_{\mathcal Q}(y,\s)\in V_1\times\cdots\times V_k$. We write $f^k(V_1,\ldots,V_k)=|F^k(V_1,\ldots,V_k)|$.

Define $L^{(k)}=\{I\in\mathcal Q:I\cap\{1,\ldots,k+1\} = \{k+1\}\}$ and $U^{(k)}=\{I\in\mathcal Q:I\cap\{1,\ldots,k+1\}=\{1,\ldots,k\}\}$. Then
\[\mathcal Q^{(k+1)}=\mathcal Q^{(k)}\sqcup\mathcal L^{(k)}\sqcup\mathcal U^{(k)}.\]
By definition, $F^{k+1}(V_1,\ldots,V_{k+1})$ can therefore be written as the set of pairs $(x,(\s,\tbf,\ubf))\in\F_2^t\times \left(S^{\mathcal Q^{(k)}}\times S^{\mathcal L^{(k)}}\times S^{\mathcal U^{(k)}}\right)$ such that $e^{(k+1)}_{\mathcal Q}(x,(\s,\tbf,\ubf))\in V_1\times\cdots\times V_{k+1}$. Recall that\[e_{\mathcal Q^{(k+1)},i}(x,(\s,\tbf,\ubf))=x+\sum_{I\in\mathcal Q^{(k)}\atop{i\in I}}s_I+\sum_{I\in\mathcal L^{(k)}\atop{i\in I}}t_I+\sum_{I\in\mathcal U^{(k)}\atop{i\in I}}u _I.\]
Therefore, we can rewrite the condition that $e^{(k+1)}_{\mathcal Q}(x,(\s,\tbf,\ubf))\in V_1\times\cdots\times V_{k+1}$ as
\[\left(y,\s\right)\in F^k(V_1,\ldots,V_k)\qquad\text{for}\qquad y=x+\sum_{I\in \mathcal U^{(k)}}u_I\]
and
\[x+\sum_{I\in \mathcal Q^{(k)}\atop{k+1\in I}}s_I+\sum_{I\in \mathcal L^{(k)}}t_I\in V_{k+1}.\]
Therefore, $f^{k+1}(V_1,\ldots,V_{k+1})$ is the number of edges between the multisets $W_k$ and $V_{k+1}$ in the multigraph $\Cay(\F_2^t,S)^m$ where $m=|\mathcal L^{(k)}\sqcup\mathcal U^{(k)}|$ and $W_k$ is the multiset with element \[y+\sum_{I\in\mathcal Q^{(k)}\atop{k+1\in I}}s_I\] for each $(y,\s)\in F^k(V_1,\ldots,V_k)$.

Write $w_x$ for the multiplicity of $x$ in $W_k$, noting that $w_x \leq |S|^{|\mathcal Q^{(k)}|}$ since this is the number of choices for $\s$. Hence, by the ordinary expander-mixing lemma (for multisets), we have
\begin{align*}
\left|f^{k+1}(V_1\vphantom{\frac{|S|^{|\mathcal Q^{(k)}|}}{2^t}}\right.
&\left.\!,\ldots,V_{k+1})-\frac{|S|^{|\mathcal Q^{(k+1)}|-|\mathcal Q^{(k)}|}}{2^t}f^k(V_1,\ldots,V_k)|V_{k+1}|\right|\\
&\leq (1-\epsilon)|S|^{|\mathcal Q^{(k+1)}|-|\mathcal Q^{(k)}|}\sqrt{\left(\sum_{x\in W_k}w_x^2\right)|V_{k+1}|}\\
&\leq (1-\epsilon)|S|^{|\mathcal Q^{(k+1)}|-|\mathcal Q^{(k)}|}\sqrt{|S|^{|\mathcal Q^{(k)}|}f^k(V_1,\ldots,V_k)|V_{k+1}|}\\
&\leq (1-\epsilon)|S|^{|\mathcal Q^{(k+1)}|}\sqrt{\lvert V_1\rvert\lvert V_{k+1}\rvert}.
\end{align*}
The last line follows from the easy bound $f^k(V_1,\ldots, V_k)\leq |S|^{|\mathcal Q^{(k)}|}|V_1|$. To see this inequality, note that $|S|^{|\mathcal Q^{(k)}|}$ is the number of choices for $\s$. Once $\s$ is chosen, for each $x\in V_1$, there is a unique $y\in\F_2^t$ such that $e_{\mathcal Q^{(k)},1}(y,\s)=x$.

Since $f^1(V_1)=|V_1|$, we can telescope this bound to conclude that
\begin{align*}
\left|f^r(V_1\vphantom{\frac{|S|^{|\mathcal Q^{(r)}|}}{(2^t)^{r-1}}}\right.
&\left.\!,\ldots,V_r)-\frac{|S|^{|\mathcal Q^{(r)}|}}{(2^t)^{r-1}}|V_1|\cdots|V_{r}|\right|\\
&\leq(1-\epsilon)|S|^{|\mathcal Q^{(r)}|}\sum_{i=2}^{r}\sqrt{\lvert V_1\rvert\lvert V_i\rvert}\prod_{j=i+1}^r\frac{|V_j|}{2^t}\\
&\leq(1-\epsilon)(r-1)|S|^{|\mathcal Q^{(r)}|}\sqrt{\lvert V_1\rvert\lvert V_r\rvert}.
\end{align*}
To complete the proof, note that $f_{\mathcal Q}(V_1,\ldots,V_r)=|S|^{|\mathcal Q|-|\mathcal Q^{(r)}|}f^r(V_1,\ldots,V_r)$, since $Q\setminus Q^{(r)}$ is a multiset supported on $\{\emptyset,[r]\}$ and these terms do not contribute materially to $e_{\mathcal Q}(x,\s)$.
\end{proof}

\begin{prop}
\label{thm:discrepancy}
For $r\geq 3$, $t$, $\epsilon>0$ and $S\subset \F_2^t$ satisfying Assumptions \ref{asmp1}, \ref{asmp2}, \ref{asmp3} and $V_1,\ldots,V_r\subseteq \F_2^t$,
\[\begin{split}\left|e_{\Hsf_{r,t,S}}(V_1,\ldots,V_r)\vphantom{\frac{|S|}{(2^t)^{r-1}}}-\right.
&\left.\frac{|S|(|S|-1)\cdots(|S|-(2^{r-1}-2))}{(2^t)^{r-1}}|V_1|\cdots|V_r|\right|\\
&\leq(1-\epsilon)2r|S|^{2^{r-1}-1}\sqrt{\lvert V_1\rvert\lvert V_r\rvert}.
\end{split}\]
\end{prop}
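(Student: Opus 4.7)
The plan is to reduce Proposition \ref{thm:discrepancy} to Lemma \ref{thm:disc-with-rep} by Möbius inversion over the partition lattice of $\mathcal P$. By Lemma \ref{thm:distinct-tuple} (distinctness of ordered tuples $e(x,\s)$) together with Lemma \ref{thm:symmetric-edges} and the discussion following Lemma \ref{thm:distinct-tuple} (every reordering of an edge is achieved by some $(x,\s) \in \F_2^t \times \mathcal T$), one has
\[e(V_1, \ldots, V_r) = \#\bigl\{ (x,\s) \in \F_2^t \times \mathcal T : e(x,\s) \in V_1 \times \cdots \times V_r \bigr\}.\]
For each partition $\Pi$ of $\mathcal P$, let $h_\Pi(V_1, \ldots, V_r)$ count those $(x,\s) \in \F_2^t \times S^{\mathcal P}$ with $e(x,\s) \in V_1 \times \cdots \times V_r$ satisfying $s_I = s_J$ whenever $I, J$ lie in a common block of $\Pi$ (no constraint across blocks), and let $g_\Pi$ be the corresponding count where the induced partition $\{I \sim J \iff s_I = s_J\}$ equals $\Pi$ exactly. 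Then $h_\Pi = \sum_{\Pi' \geq \Pi} g_{\Pi'}$ and $e(V_1,\ldots,V_r) = g_{\hat 0}(V_1,\ldots,V_r)$, so Möbius inversion on the partition lattice gives
\[e(V_1,\ldots,V_r) = \sum_{\Pi \vdash \mathcal P} \mu(\hat 0, \Pi) h_\Pi(V_1, \ldots, V_r).\]

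Next I would express each $h_\Pi$ as some $f_{\mathcal Q_\Pi}$ to invoke Lemma \ref{thm:disc-with-rep}. Setting $s_I = t_{P(I)}$ where $P(I)$ is the block of $\Pi$ containing $I$, and working in $\F_2^t$,
\[e_i(x,\s) = x + \sum_{I \in \mathcal P, i \in I} s_I = x + \sum_{P \in \Pi} \bigl|\{I \in P : i \in I\}\bigr|\, t_P = x + \sum_{P \in \Pi,\, i \in J_P} t_P,\]
where $J_P = \{i \in [r] : |\{I \in P : i \in I\}| \text{ is odd}\}$. Hence $e(x,\s) = e_{\mathcal Q_\Pi}(x,\tbf)$ for the multiset $\mathcal Q_\Pi = (J_P)_{P \in \Pi}$ on $2^{[r]}$ of size $|\Pi|$ and $\tbf = (t_P)_{P \in \Pi} \in S^{|\Pi|}$. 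This gives $h_\Pi(V_1,\ldots,V_r) = f_{\mathcal Q_\Pi}(V_1,\ldots,V_r)$, so Lemma \ref{thm:disc-with-rep} yields
\[\Bigl| h_\Pi(V_1,\ldots,V_r) - \frac{|S|^{|\Pi|}}{(2^t)^{r-1}}|V_1|\cdots|V_r| \Bigr| \leq (1-\epsilon)(r-1)|S|^{|\Pi|}\sqrt{|V_1||V_r|}.\]

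Substituting into the Möbius expansion yields the main term $\frac{|V_1|\cdots|V_r|}{(2^t)^{r-1}} \sum_{\Pi \vdash \mathcal P} \mu(\hat 0, \Pi) |S|^{|\Pi|}$. By the classical identity relating the partition Möbius function to the number of injective maps from an $n$-set to $S$, this sum equals the falling factorial $(|S|)_{|\mathcal P|} = |S|(|S|-1)\cdots(|S|-(2^{r-1}-2))$, matching the stated leading coefficient exactly. The total error is at most $(1-\epsilon)(r-1)\sqrt{|V_1||V_r|} \sum_{\Pi \vdash \mathcal P}|\mu(\hat 0, \Pi)||S|^{|\Pi|}$. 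Grouping partitions by their number of blocks and applying Claim \ref{thm:mobius-sum} to the $n$-set $\mathcal P$ (with $n = 2^{r-1}-1$) to obtain $\sum_{|\Pi| = n-k}|\mu(\hat 0, \Pi)| \leq n^{2k}$, the sum is at most $|S|^n \sum_{k \geq 0}(n^2/|S|)^k$. Assumption \ref{asmp1} yields $|S| \geq 2^{2r} \geq 4n^2$, so the geometric series is bounded by $4/3$, and the total error is at most $\tfrac{4}{3}(r-1)(1-\epsilon)|S|^{2^{r-1}-1}\sqrt{|V_1||V_r|} \leq (1-\epsilon)\cdot 2r|S|^{2^{r-1}-1}\sqrt{|V_1||V_r|}$, as required.

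The main obstacle lies in the second step: one must set up the Möbius inversion on the partition lattice of $\mathcal P$ carefully and verify that the collapsed sum $\sum_{I \in P, i \in I} s_I$ in $\F_2^t$ really picks out the parity set $J_P$, so that $h_\Pi$ matches $f_{\mathcal Q_\Pi}$ on the nose. Once this algebraic bookkeeping is done, the leading-term identity is standard and the tail estimate is essentially the same geometric series calculation already used in the proof of Lemma \ref{thm:asmp3-lower}.
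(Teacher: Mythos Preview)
Your proof is correct and follows essentially the same approach as the paper's: M\"obius inversion over the partition lattice of $\mathcal P$, identification of each collapsed count $h_\Pi$ with an $f_{\mathcal Q_\Pi}$ via the symmetric-difference (parity) sets $J_P$, the falling-factorial identity for the main term, and the geometric-series bound via Claim~\ref{thm:mobius-sum} for the error. Your write-up is in fact slightly more careful in two places---you explicitly justify the identity $e(V_1,\ldots,V_r)=\#\{(x,\s)\in\F_2^t\times\mathcal T: e(x,\s)\in V_1\times\cdots\times V_r\}$ via Lemmas~\ref{thm:distinct-tuple} and~\ref{thm:symmetric-edges}, and you track the constant $(r-1)$ from Lemma~\ref{thm:disc-with-rep} rather than rounding it up to $r$---but the argument is the same.
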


\begin{proof}
For $\Lambda$ a partition of $\mathcal P$, let $\mathcal T[\Lambda]\subseteq S^{\mathcal P}$ be the set of tuples $(s_I)_{I\in\mathcal P}$ such that if $I,J$ are in the same part of $\Lambda$ then $s_I=s_J$. We define a multiset $\mathcal P[\Lambda]$ supported in $2^{[r]}$ of size $|\Lambda|$ by letting the symmetric difference $I_1\triangle\cdots\triangle I_k$ (that is, the set of elements that appear in an odd number of the $I_i$'s) be an element of $\mathcal P[\Lambda]$ for each $\{I_1,\ldots,I_k\}$ a part of $\Lambda$.

For $\s\in T[\Lambda]$, define $\tbf\in S^{\mathcal P[\Lambda]}$ by letting $t_{I_1\triangle\cdots\triangle I_k}$ be equal to $s_{I_1}=\cdots=s_{I_k}$ whenever $\{I_1,\ldots,I_k\}$ is a part of $\Lambda$. This gives a bijection between $T[\Lambda]$ and $S^{\mathcal P[\Lambda]}$. Moreover, this map has the property that $e(x,\s)=e_{\mathcal P[\Lambda]}(x,\tbf)$ whenever $\s\in T[\Lambda]$. Thus, the number of pairs $(x,\s)\in\F_2^t\times \mathcal T[\Lambda]$ such that $e(x,\s)\in V_1\times\cdots\times V_r$ is exactly equal to $f_{\mathcal P[\Lambda]}(V_1,\ldots,V_r)$.

Finally, we want to count $e(V_1,\ldots,V_r)$, the number of $(x,\s)\in\F_2^t\times S^{|\mathcal P|}$ with $e(x,\s)\in V_1\times\cdots\times V_r$ and the additional constraint that the $s_I$'s are distinct. By M\"obius inversion, it follows that
\[e(V_1,\ldots,V_r)=\sum_{\Lambda}\mu(\Lambda)f_{\mathcal P[\Lambda]}(V_1,\ldots,V_r),\]
where 
\[\mu(\Lambda)=(-1)^{|\mathcal P|-|\Lambda|}\prod_{L\in\Lambda}(|L|-1)!.\] 
Combining this formula with the result of Lemma \ref{thm:disc-with-rep} gives that
\begin{equation*}
\begin{split}
\left|e(V_1,\ldots,V_r)-\frac{|V_1|\cdots|V_r|}{(2^t)^{r-1}}\right.
&\left.\vphantom{\frac{|V_1|}{(2^t)^{r-1}}}\sum_{\Lambda}\mu(\Lambda)|S|^{|\Lambda|}\right|\\
&\leq(1-\epsilon)r\sqrt{\lvert V_1\rvert\lvert V_r\rvert}\sum_{\Lambda}|\mu(\Lambda)||S|^{|\Lambda|}.
\end{split}
\end{equation*}
The first sum is exactly equal to $|S|(|S|-1)\cdots(|S|-(2^{r-1}-2))$, while we can bound the second sum by $2|S|^{2^{r-1}-1}$ in the same manner as in the proof of Lemma \ref{thm:asmp3-lower}.
\end{proof}

Averaging the above result over all renumberings of $V_1,\ldots,V_r$, we obtain Theorem \ref{thm:avg-discrepancy}.

\section{Concluding remarks}

We conclude with two questions. The first is essentially a reiteration of a question in~\cite{C17}: are the hypergraphs $\Hsf_{r, t, S}$ topological expanders for some suitable choice of parameters? In~\cite{C17}, the first author speculated that they might even satisfy a certain combinatorial notion of expansion, known as cosystolic expansion, from which topological expansion follows~\cite{DKW16}. Unfortunately, as pointed out to us by Gundert and Luria~\cite{GL18}, this is not the case. Nevertheless, the possibility that our hypergraphs are topological expanders remains a tantalizing one.

The second question concerns generalizations of our construction. In a fairly precise sense, abelian groups are the worst possible groups over which to define Cayley graphs if one is trying to produce expanders. A much better choice would be to work over finite simple groups of Lie type with bounded rank, where it is known~\cite{BGGT15} that two random elements almost surely suffice to generate an expander. However, our constructions depend in an absolutely critical way on commutativity, so much so that they might fittingly be called abelian complexes. On the other hand, $r$-uniform Ramanujan complexes can be explicitly rendered~\cite{LSV052} in terms of Cayley graphs over ${\rm PGL}_r(\F_q)$, so it is reasonable to believe that there is some more general mechanism which works over these groups. We consider the problem of distilling out this mechanism to be one of the central problems in the area.

\vspace{4mm}
\noindent
{\bf Acknowledgements.} This paper was partially written while the first author was visiting the California Institute of Technology as a Moore Distinguished Scholar and he is extremely grateful for their kind support.

\end{document}